
\documentclass[12pt,draftcls,onecolumn,letterpaper]{IEEEtran}

\IEEEoverridecommandlockouts                              

\usepackage{verbatim,color}

\usepackage{epsfig,verbatim,cancel}
\usepackage{amsmath}
\usepackage{epsfig,amsmath,amssymb}
\usepackage{longtable}
\usepackage{color}
\usepackage{mathrsfs}

\setlength{\oddsidemargin}{-.20in}
\setlength{\evensidemargin}{-.20in} \setlength{\textwidth}{6.8in}
\setlength{\topmargin}{-0.4in} \setlength{\textheight}{9in}

\pagenumbering{arabic}


\usepackage{algorithm,epsfig,amssymb,latexsym,color,amsmath,pifont,colordvi,multicol,verbatim}
\usepackage{amsthm}
\usepackage{multirow}
\newtheorem{proposition}{Proposition}
\newtheorem{theorem}[proposition]{Theorem}

\newtheorem{corollary}{Corollary}

\newtheorem{assumption}{Assumption}
\newtheorem{lemma}{Lemma}

\newtheorem{remark}{Remark}

\usepackage{amsmath, amssymb, bbm, xspace}




\newcommand{\ie}{i.e.\@\xspace} 



\newcommand{\Real}{\ensuremath{\mathbb{R}}}

\DeclareMathOperator*{\st}{subject\;to}

\def\half  {{\textstyle{1\over 2}}}

\def\spose#1{\hbox to 0pt{#1\hss}}

\def\text #1{\hbox{\quad#1\quad}}


\def\nthinsp{\mskip -2   mu}




\def\superstar{^{\raise 0.5pt\hbox{$\nthinsp *$}}}
\def\SUPERSTAR{^{\raise 0.5pt\hbox{$*$}}}

\def\lamstarT {\lambda^{\raise 0.5pt\hbox{$\nthinsp *$}T}}



\def\Fscr{{\cal F}}

\def\Lscr{{\cal L}}

\def\hbar{\skew{4.2}\bar h}

		\def\bkE{{\rm I\kern-.17em E}}
		\def\bkE{\mathbb{E}}
		\def\bk1{{\rm 1\kern-.17em l}}
		\def\bkD{{\rm I\kern-.17em D}}
		\def\bkR{{\rm I\kern-.17em R}}
		\def\bkP{{\rm I\kern-.17em P}}
		\def\bkY{{\bf \kern-.17em Y}}
		\def\bkZ{{\bf \kern-.17em Z}}


		\def\beq{\begin{eqnarray}}
		\def\bc{\begin{center}}
		\def\be{\begin{enumerate}}
		\def\bi{\begin{itemize}}
		\def\bs{\begin{small}}
		\def\bS{\begin{slide}}
		\def\ec{\end{center}}
		\def\ee{\end{enumerate}}
		\def\ei{\end{itemize}}
		\def\es{\end{small}}
		\def\eS{\end{slide}}
		\def\eeq{\end{eqnarray}}
		\def\qed{\quad \vrule height7.5pt width4.17pt depth0pt}

	\def\cp2problem#1#2#3#4{\fbox
		 {\begin{tabular*}{0.9\textwidth}
			{@{}l@{\extracolsep{\fill}}l@{\extracolsep{6pt}}l@{\extracolsep{\fill}}c@{}}
				#1 & & $#4 $ 
			\end{tabular*}}}

\newcommand{\pmat}[1]{\begin{pmatrix} #1 \end{pmatrix}}
		
		\renewcommand{\emph}[1]{\textbf{#1}}

		\def\bk1{{\rm 1\kern-.17em l}}
		\def\bkD{{\rm I\kern-.17em D}}
		\def\bkR{{\rm I\kern-.17em R}}
		\def\bkP{{\rm I\kern-.17em P}}
		
		\def\bkZ{{\bf{Z}}}

\newcommand {\beeq}[1]{\begin{equation}\label{#1}}
\newcommand {\eeeq}{\end{equation}}
\newcommand {\bea}{\begin{eqnarray}}
\newcommand {\eea}{\end{eqnarray}}

\def\texitem#1{\par\smallskip\noindent\hangindent 25pt
               \hbox to 25pt {\hss #1 ~}\ignorespaces}

\def\st{\mbox{subject to}}


\newcommand{\f}{\tilde{f}}
\newcommand{\g}{\tilde{g}}

\usepackage{verbatim,color}

\newcommand{\blue}[1]{{\bf \color{blue}#1}}
\newcommand{\vvs}[1]{{\color{black}#1}}
\newcommand{\vs}[1]{{\color{black}#1}}
\newcommand{\sv}[1]{{\color{black}#1}}
\newcommand{\us}[1]{{\color{black}#1}}
\newcommand{\uvs}[1]{{\color{black}#1}}
\newcommand{\uss}[1]{{\color{black}#1}}
\newcommand{\usss}[1]{{\color{black}#1}}
\newcommand{\yx}[1]{{\color{black}#1}}
\newcommand{\xy}[1]{{\color{black}#1}}

\newcommand{\yxb}[1]{{\color{black}#1}}
\newcommand{\yxc}[1]{{\color{black}#1}}

\newcommand{\yxe}[1]{{\color{black}#1}}
\newcommand{\yxf}[1]{{\color{black}#1}}

\newcommand{\yux}[1]{{\color{black}#1}}
\def\bE{\mathbb{E}}

\def\bP{\mathbb{P}}
\def\bE{\mathbb{E}}
\def\scrF{\mathcal{F}}

\def\bP{\mathbb{P}}
\def\bR{\mathbb{R}}

\allowdisplaybreaks

\begin{document}
\title{{\bf SI-ADMM}: A Stochastic Inexact ADMM Framework for Stochastic Convex Programs}
\author{Yue Xie and Uday V. Shanbhag\thanks{Xie and Shanbhag are with the Indust. and Manuf. Engg.,
					   Pennsylvania State University, University Park,
					   PA 16802, USA, {\tt\small yux111@psu.edu,
						   udaybag@psu.edu}. This work has been partially supported by NSF awards CMMI-1538605 and CMMI-1246887 (CAREER). Preliminary research by the authors has appeared in the Winter Simulation Conference (WSC)~\cite{DBLP:conf/wsc/XieS16}. Supplementary document is uploaded on arXiv(1711.05286) \cite{2017arXiv171105286X}. 
						   }
}
\maketitle
\begin{abstract}
We consider the structured stochastic convex program \vs{requiring the
minimization of} $\mathbb{E}[\tilde f(x,\xi)]+\mathbb{E}[\tilde g(y,\xi)]$
\vs{subject to the constraint} $Ax + By = b$. Motivated by the need for \vs{decentralized
schemes and structure,} 
 we propose a stochastic inexact ADMM ({\bf SI-ADMM}) \vvs{framework} where
subproblems are solved inexactly via stochastic approximation schemes. Based on
this framework, we prove the following: (i) under suitable assumptions on the
\vs{associated batch-size of samples utilized at} each iteration, the SI-ADMM \vs{scheme} produces a
sequence that converges to the unique solution almost surely; (ii) If the
number of gradient steps (\sv{or equivalently, the number of sampled gradients}) \vvs{utilized for solving the subproblems}
{in each iteration} increases at a geometric rate, the mean-squared error
diminishes to zero at a prescribed geometric rate; (iii) \vvs{The overall 
iteration} complexity \vvs{in terms of  gradient steps (or equivalently samples)}
\vs{is found to be consistent} with the \vvs{canonical level of
$\mathcal{O}(1/\epsilon)$.} \vvs{Preliminary applications on {\sc
LASSO} and distributed regression suggest that the scheme performs well
compared to its competitors.}
\end{abstract}

\section{Introduction}
{Over the last decade, structured optimization problems have been
addressed via a \vvs{subclass} of Lagrangian schemes, namely the alternating
direction method of multipliers
(ADMM)~\cite{boyd2011distributed}. 
Such schemes have grown {immensely} popular, particularly in resolving a
host of \uss{structured} machine learning and image processing problems  
such as image recovery~\cite{afonso2010fast},
robust PCA~\cite{DBLP:journals/corr/LinCM10}, low-rank
representation~\cite{lin2011linearized} (cf.~\cite{boyd2011distributed}
for a comprehensive review.)  {Typically, ADMM is applied towards structured {deterministic}
	convex optimization problems of the form:
\begin{align}\label{DOpt}
\min_{x \in \mathcal{X},y \in \mathcal{Y}} \ f(x) + g(y) \quad \st \  Ax + By = b.
\end{align}
{We} consider {a stochastic generalization leading to a} structured stochastic convex program:}
\begin{align} \tag{SOpt}\label{SOpt}
\min_{x \in \mathcal{X}, y \in \mathcal{Y}} \ \mathbb{E}[\tilde f(x,\xi)] + \mathbb{E}[\tilde g(y,\xi)] \quad \st \
 Ax + By = b,
\end{align}
where $\xi:\Omega \to \Real^d$, $\tilde f: \mathcal{X} \times \Real^d \to \Real$, 
$\tilde g: \mathcal{Y} \times \Real^d \to \Real$, $A \in \Real^{p \times n}$, $B \in
\Real^{p \times m}$, $b \in \Real^p$, and
$(\Omega,\Fscr,\mathbb{P})$ denotes the probability space.  Furthermore,
	we assume that $\tilde f(\cdot,\xi)$ and
	$\tilde g(\cdot,\xi)$ are convex in $(\cdot)$ for
	every $\xi \in \Xi \subseteq \bR^d$, where $\vs{ \Xi \triangleq \{\xi(\omega): \omega \in \Omega\}}$.  
We assume that $\mathcal{X} \triangleq \bR^n$ and $\mathcal{Y} \triangleq \bR^m$ throughout this
paper.	
\yx{Popular \vs{Monte-Carlo sampling} approaches} for the solution of \eqref{SOpt} 
\vs{include} sample-average
approximation~\cite{shapiro09lectures} {and} stochastic approximation
schemes~\cite{robbins51sa,spall,nemirovski_robust_2009,Farzad1}.
We also note that the recent work by Pasupathy et al.~\cite{pasupathy2018sampling}
considers sequential sampling concerns, an issue that assumes relevance in this 
paper.

\vvs{\bf Motivation.} \vvs{There has been significant effort in extending ADMM to multi-block and nonconvex
regimes~\cite{wang2019global,doi:10.1137/140990309} but far
less exists on contending with expectation-valued objectives.} Motivated by
\vvs{this lacuna as well} as the following benefits: (1) ADMM schemes
display strong theoretical properties and computational performance in the
resolution of structured constrained optimization problems (see~Boyd et
al.~\cite{boyd2011distributed}); (2) this avenue allows for problem structure
to be exploited {via} distributed computation, this research considers
addressing \eqref{SOpt} by adapting an existing ADMM architecture to stochastic
setting. 

{Over the last several years, there has been a surge of interest in developing and customizing ADMM schemes to contend with a range of problems in decision and control theory, including distributed control~\cite{admm7,admm10}, model-predictive control~\cite{admm11}, system identification~\cite{admm12}, discrete-valued control~\cite{admm13}, control design~\cite{admm14, admm7}, and  consensus techniques~\cite{admm5,admm8,admm10}. We believe that the proposed models could be enriched and broadened to accommodate large datasets, \vs{achievable by} allowing for expectation-valued objectives \vs{as} managed by the schemes proposed in this paper.}

\uvs{\bf Stochastic generalizations.} {Prior work~\cite{DBLP:journals/corr/WangB13a,ouyang2013stochastic} has focused on the following problem:} 
\begin{align}\label{Regularized ERM}
\min_{x\in \mathcal{X},y \in \mathcal{Y}} \ \bE_{\xi} [\theta_1(x,\xi)]+\theta_2(y) \quad \st \ Ax+By = b.
\end{align} 
This problem has arisen from minimizing the regularized expected
risk metric~\cite{vapnik1998statistical,ouyang2013stochastic}, where
$\theta_1(x) \triangleq \mathbb{E}[\theta_1(x,\xi)]$ {denotes} the expected
loss function {and} $\theta_2(y)$ represents the regularizer. \yxe{Variants of stochastic ADMM~\cite{DBLP:journals/corr/WangB13a, ouyang2013stochastic, azadi2014towards, gao2018information, Chen2018} have been proposed for resolving~\eqref{SOpt}}. \vs{Amongst} these,
Ouyang et al.~\cite{ouyang2013stochastic} 
derive rates in terms of sub-optimality plus
infeasibility: ${\cal O}(1/\sqrt{T})$ when $\theta_1(x)$ is convex and smooth, and ${\cal O}(\log(T)/T)$ when $\theta_1(x)$ is strongly convex, where $T$ denotes the iteration index (cf. ~\cite{azadi2014towards,Chen2018} for slightly improved
rates)}. \yxe{Gao et al.~\cite{gao2018information} obtain similar results when both $\theta_1$ and $\theta_2$ assumed to be expectation-valued. Note that all of these schemes \vs{as well as SI-ADMM} can address general distributions} while 
\yxe{finite-sum problems are considered in~\cite{zhong2014fast, zheng2016fast, liu2017accelerated}}.

\vvs{\bf Contributions.} {We consider the resolution of \eqref{SOpt}
\vvs{via} a generalized ADMM scheme}~\cite{deng2016global}, where a linear rate of convergence was proven under
suitable strong convexity assumptions. 
We modify this scheme to \yxc{a stochastic inexact ADMM} {\vvs{\bf (SI-ADMM)} \vs{that \vvs{requires} computing inexact solutions} to the \yx{subproblems} at each
iteration, where each subproblem (a stochastic optimization
problem) is resolved inexactly by a stochastic approximation method, requiring a finite (but) increasing number of
gradient steps. Based on
such a framework, we make the following contributions:\\
{\bf (i)} \textbf{Unconstrained stochastic optimization}: {We derive an optimal rate for stochastic approximation on an unconstrained problem when the noise is state dependent}. \\
{\bf (ii)} {\bf \vs{Almost-sure} convergence:} 
Under suitable assumptions on the number of samples utilized at each iteration, the sequence of iterates \us{is shown to converge a.s.} to the unique solution of the problem. \\
{\bf (iii)} {\bf Geometric rate of convergence:} When the number of gradient steps (or equivalently sampled gradients) increases at a geometric rate, the mean-squared error of major iterates diminishes to zero at a {prescribed} geometric rate. Moreover, a canonical overall iteration (sample) complexity is demonstrated.

\vvs{Table~\ref{tab:SADMM results} \vvs{compares our scheme (SI-ADMM) with SADM0~\cite{ouyang2013stochastic} and SADM1~\cite{azadi2014towards} under different assumptions on $f$ and $g$ for problem \eqref{DOpt}. Note that SI-ADMM can accommodate expectation-valued $g$ and provides optimal rate and overall iteration complexity statements.}}

\begin{table}[htbp]
{\tiny
\begin{center}
\begin{tabular}{| c | c | c | c |}
	\hline
	{\bf Algorithm}               & {\bf SADM0 \cite{ouyang2013stochastic}}                    & {\bf SADM1 \cite{azadi2014towards} }                                  & {\bf SI-ADMM}                                     \\ \hline
	\multirow{2}{*}{$f$}    & Stochastic               & Stochastic                             & Stochastic                                  \\
	                        & SC + \yx{D}                  & SC + D                                 & SC + LG                                     \\ \hline
	\multirow{2}{*}{$g$}    & deterministic            & deterministic                          & Stochastic                                  \\
	                        & C + nonsmooth            & C + nonsmooth                          & SC + LG                                     \\ \hline
	Rate                    & $\mathcal{O}(\log(T)/T)$  & $\mathcal{O}(1/T)$                     & linear{$^\dagger$}                        \\ \hline
	\multirow{2}{*}{Metric} & suboptimality            & suboptimality                          & \multirow{2}{*}{$\bE[\| u_T - u^* \|^2_G]$} \\
	                        & + infeasibility          & + infeasibility                        &  \\ \hline
	a.s. conv.              & $\times$                 & $\times$                               & $\checkmark$                                \\ \hline
	complexity                & \yxe{$\mathcal{O}( \frac{1}{\epsilon} \log(\frac{1}{\epsilon} ))$ }            & \yxe{$\mathcal{O}(\frac{1}{\epsilon})$} & $\mathcal{O}(\frac{1}{\epsilon})$                   \\ \hline
	\multirow{2}{*}{Notes}                   & \multirow{2}{*}{$\mathcal{X}$ is compact} & $\mathcal{X}$ is compact & $\mathcal{X} = \bR^n$  \\
	& &$\mathcal{Y}$ is compact & $\mathcal{Y} = \bR^m$\\ \hline
\end{tabular}
\end{center}
\caption{Summary of results on stochastic variants of ADMM} 
\vspace{-0.2in}
\footnotesize (SC) C: (Strongly) Convex,  D: Differentiable, LG: \yux{Continuously differentiable with Lipschitz continuous gradient}; \textit{suboptimality + infeasibility:} $\bE[f(\bar x_T) + g(\bar y_T) - f(x^*) - g(y^*) + \bar \rho \| A \bar x_T + B \bar y_T - b \| ] $, where $\bar \rho$ \vs{denotes} the upper bound for dual variables in \cite{azadi2014towards}, $\bar x_T, \bar y_T$ \vs{denote ergodic averages}, 
and $(x^*,y^*)$ is the optimal solution. \yxe{Complexity of SADM0 and SADM1 estimated using this metric.} 
Complexity of SI-ADMM is overall iteration/sample complexity to obtain $\epsilon$-solution $\bar u$ such that $\bE[\| \bar u - u^* \|^2_G ] \leq \epsilon$.  \yxe{$\dagger$Linear rate of SI-ADMM with respect to major iterates.}
}
\label{tab:SADMM results}
\vspace{-0.2in}
\end{table}

Finally, we implement our scheme on LASSO and distributed regression~\cite{sundhar2012new}, \vs{where} 
\vs{we observe that SI-ADMM performs relatively well on the problems considered.} 

The remainder of the paper is organized into
four sections. In Section~\ref{sec: SI-ADMM}, we
review the linear {rate} \vvs{statement} {for deterministic ADMM}, {derive the optimal rate for SA on \vs{unconstrained problems}}, and {formally define the stochastic inexact ADMM scheme.}  
In section~\ref{sec: convergence and rate-SIADMM}, asymptotic convergence
and rate statements are {developed} \sv{while} {preliminary numerics} are
presented in section~\ref{sec: Numerics-SIADMM}. We
conclude with a short summary in
Section~\ref{sec: conclusion-SIADMM}.

 \textbf{Notation:} $\lambda_{\rm max}(M)$ and $\lambda_{\rm min} (M)$ denote
the largest and smallest eigenvalue of matrix $M$, respectively. Given $z \in
\bR^n, M \in \mathcal{S}^n$, $\|z\|_M^2 \triangleq z^TMz$. When $M \in
\mathcal{S}_+^n$, $\|z\|_M \triangleq \sqrt{z^TMz}$. ${\bf 0}$ denotes \sv{a} vector
or matrix containing \sv{zeros} with \sv{the appropriate} dimension, \sv{while} $\log$ denotes logarithm
\sv{to} base $e$. $(v_1; \hdots; v_k) \triangleq (v_1^T, \hdots, v_k^T)^T$. 


\section{A stochastic ADMM scheme}\label{sec: SI-ADMM}
In section~\ref{subsec: g-ADMM}, we {review a 
convergence statement for deterministic ADMM schemes}. {We then derive} a
{rate statement} for unconstrained stochastic optimization in
Section~\ref{subsec:uncon-SP}.
In Section~\ref{subsec: SI-ADMM},  we present 
a stochastic inexact ADMM (SI-ADMM) scheme that {requires resolving} unconstrained problems at each step.
{Finally, we derive some useful recursive inequalities
 in Section~\ref{subsec: choice of sample size}}.
		
\subsection{A deterministic generalized ADMM scheme}\label{subsec: g-ADMM}
A generalized ADMM scheme (Algorithm~\ref{generalized admm}) {that can resolve
	\eqref{SOpt}} was suggested in~\cite{deng2016global}. Suppose the
		augmented Lagrangian function  
$\mathcal L_\rho (x,y,\lambda)$ is defined as 
\begin{align}\label{Def: ALfunc}
\mathcal L_\rho (x,y,\lambda) \triangleq & f(x) + g(y) - \lambda^T(Ax+ By-b) 
 + \frac{\rho}{2} \|Ax + By-b\|^2,
\end{align}
where $f(x) \triangleq \bE[\f(x,\xi)]$ and $g(y) \triangleq \bE[\g(y,\xi)]$, both of which are 
convex due to the \sv{assumptions on} \eqref{SOpt}. \yxe{In Algorithm~\ref{generalized admm}, \vs{$\gamma > 0$ denotes the stepsize for the multiplier update.} 
}
\begin{algorithm}[htbp]
\caption{{\bf g-ADMM:} Generalized ADMM scheme}
\label{generalized admm}
\begin{enumerate}
\item[(0)] 
	\uvs{Choose matrices $P, Q$ and let $k=0$;
Given $x_0, y_0,\lambda_0, \rho > 0$, $\gamma > 0$};
\item[(1)] Let $x_{k+1}, y_{k+1}, \lambda_{k+1}$ be given by the following:
\begin{align}
\tag{$y$-Update} \label{yexactupdate}	y_{k+1} &:= \displaystyle
\mbox{arg}\hspace{-0.025in}\min_y  \left(\mathcal L_\rho(x_k,y,\lambda_k) + \half \| y-y_k \|_Q^2 \right) \\
\tag{$x$-Update} \label{xexactupdate}	x_{k+1} &: = \displaystyle \mbox{arg}\hspace{-0.025in}\min_x \left(\mathcal L_\rho(x,y_{k+1},\lambda_k) + \half \| x-x_k \|_P^2 \right) \\
\tag{$\lambda$-Update}	\lambda_{k+1} &:= \lambda_k - \gamma \rho( Ax_{k+1} + By_{k+1} - b). 
\end{align}
\item [(2)] \yxf{$k:=k+1$ and return to (1).} 
\end{enumerate}
\end{algorithm}
Moreover, we make the following assumptions, all of which are necessary for \sv{proving the}
\textbf{global linear} convergence of the sequence of iterates
produced by Algorithm \ref{generalized admm}. Of these, the first pertains to the
existence of a {KKT} point to the original optimization problem: 
\begin{assumption} \label{Ass: existence of KKT point-SI-ADMM} There exists a \xy{KKT} point $u^* \triangleq
(x^*;y^*;\lambda^*)$ to problem (SOpt); i.e., $x^*,y^*,\lambda^*$ satisfy the KKT conditions:
 $A^T \lambda^*   \in \partial f(x^*)$, 
 $B^T \lambda^*  \in \partial g(y^*)$, $Ax^* + By^* = b.$
\end{assumption}
The second {assumption imposes} convexity and Lipschitzian assumptions \vs{on}
$f$ and $g$ {where using the} notation $\nabla_x f$ or $\nabla_y g$
implies that {either} $f$ or $g$ is differentiable. Let us define $\hat{P}$ as 
\begin{align}\label{def: Phat}
\hat P \triangleq P + \rho A^TA.
\end{align}
\begin{assumption}\label{Ass: objective function-SI-ADMM} {$\vs{Q,\hat P \succeq 0}$ are
	symmetric matrices. Additionally,}  {one of the following holds.}\\
\textbf{(a)} $f(x)$ is strongly convex and $\nabla_x f(x)$ is
	Lipschitz continuous in $x$ on {$\Real^n$}. $A$ has full row rank. Additionally, $B$ has full column rank whenever $Q \succ {\bf 0}$.\\
\textbf{(b)} $f(x)$ and $g(y)$ are strongly convex in $x$ {on $\Real^n$ and in $y$ on $\Real^m$, respectively. Furthermore,  $\nabla_x
		f(x)$ is a Lipschitz continuous function in $x$ on $\Real^n$ and $A$ has full row rank.} \\
\textbf{(c)} {$f(x)$ is strongly convex and $\nabla_x f(x)$ is Lipschitz continuous in $x$ on $\Real^n$. Further, 
$\nabla_y g(y)$ is Lipschitz continuous in $y$ on $\Real^m$ and $B$
has full column rank.} \\
\textbf{(d)} {The functions $f(x)$ and $g(y)$ are strongly convex in $x$ on $\Real^n$ and in $y$ on $\Real^m$, respectively. Furthermore,
	$\nabla_x f(x)$ and $\nabla_y g(y)$ are Lipschitz continuous in $x$ on $\Real^n$ and in $y$ on $\Real^m$, respectively}.
\end{assumption}
The next assumption ensures that Algorithm~\ref{generalized admm} generates a bounded sequence. 
\begin{assumption}\label{Ass: P&Q-Si-ADMM}
Either one of the following holds:
(i) $\hat P \succ  {\bf 0}, Q \succ {\bf 0}$; or (ii) $\hat P \succ {\bf 0}, Q = {\bf 0}$, $B$ has full column rank.
\end{assumption}
Next, we review the main result from~\cite{deng2016global} but 
require some definitions. \sv{Let} $u_k \triangleq (x_k; y_k; \lambda_k)$ \sv{and} 
\begin{align}\label{Def: G} 
G \triangleq \pmat{P +\rho A^TA & & \\ & Q & \\ & & \frac{1}{\rho \gamma} I_p}.
\end{align}
\begin{theorem}[{\bf\cite[Theorem 3.1, 3.2]{deng2016global}}]  \label{linear convergence of g-ADMM}
\textit{Suppose Assumptions~\ref{Ass: existence of KKT point-SI-ADMM}, \ref{Ass: objective function-SI-ADMM}, and~\ref{Ass: P&Q-Si-ADMM} hold. \yux{In addition, suppose that
$\gamma$ and $P$ satisfy the following.
\begin{align} \label{gammaP}
\begin{aligned}
\mbox{(i)} &\  \mbox{If $P \neq {\bf 0}$ then $ (2-\gamma)P \succ (\gamma - 1)\rho A^TA$;} \\
\mbox{(ii)} &\ \mbox{If $P = {\bf 0}$ then $\gamma = 1$. }
\end{aligned}
\end{align}} 
Then there exists a KKT point of \eqref{SOpt} denoted by $u^* = (x^*; y^*; \lambda^*) $, such that $\|u_k-u^*\|_G \rightarrow 0$ as $k \to \infty$.
	Furthermore, there exists  $\delta > 0$ such that 
\begin{equation}\label{contraction of iterates}
 \|u_{k+1}-u^*\|_G^2 \leq \frac{1}{1+\delta} \|u_k - u^*\|_G^2.
\end{equation}
}
\end{theorem}
Note that {if $Q$ and $\hat{P}$ are chosen to be positive definite,
	then $\|\bullet\|_G$ reduces to a norm, rather than a  semi-norm}. {
		The choice of $\delta$ {is examined in} the next Corollary \vs{where $\mu_f$ and $L_f$ denote the convexity and Lipschitz constants of $f$ and $\nabla_x f$, respectively}.
\begin{corollary}[{\bf\cite[Cor~3.6]{deng2016global}}] \label{choice of delta - SI-ADMM}
Suppose Assumption~\ref{Ass: existence of KKT point-SI-ADMM}, \vs{\ref{Ass: objective function-SI-ADMM} (a)}, \ref{Ass: P&Q-Si-ADMM} hold, 
 $P = {\bf 0}$, \vs{and} $\gamma = 1$.\\
(i) {If} $Q = {\bf 0}$, the sequence $\{ u_k \}$ generated by Algorithm~\ref{generalized admm} satisfies \eqref{contraction of iterates} with
$
\delta \vs{ \ \triangleq \ } 2 \left( \frac{\rho \| A \|^2}{\mu_f} + \frac{L_f}{\rho \lambda_{\min}(AA^T)} \right)^{-1}.
$\\
(ii) If $g$ is strongly convex with constant $\sigma_g$ and $Q
			 \succ {\bf 0}$, then \eqref{contraction of iterates} is satisfied with $ \delta \triangleq \min \{ \delta_0, 2 \sigma_g / \|
			 Q \| \}$, where 
$\delta_0 \triangleq 2 \left( \frac{\rho \| A \|^2}{\mu_f} + \frac{L_f}{\rho
		\lambda_{\min}(AA^T)} \right)^{-1}$. 
\end{corollary}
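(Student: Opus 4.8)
The plan is to extract the explicit contraction constant directly from the descent mechanism that underlies Theorem~\ref{linear convergence of g-ADMM}, specialized to $P={\bf 0}$ and $\gamma = 1$. First I would recall that the proof of that theorem produces, en route to \eqref{contraction of iterates}, a one-step energy inequality of the form
\begin{align*}
\|u_k - u^*\|_G^2 - \|u_{k+1}-u^*\|_G^2 \ \geq \ 2\langle \nabla f(x_{k+1}) - \nabla f(x^*),\, x_{k+1}-x^*\rangle + 2\langle \nabla g(y_{k+1}) - \nabla g(y^*),\, y_{k+1}-y^*\rangle + (\text{nonnegative terms}),
\end{align*}
so that to certify the rate $1/(1+\delta)$ it suffices to bound the right-hand slack below by $\delta\|u_{k+1}-u^*\|_G^2$. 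Because $P={\bf 0}$ and $\gamma=1$, the matrix $G$ of \eqref{Def: G} is block diagonal with blocks $\rho A^TA$, $Q$, and $\tfrac1\rho I_p$; hence
\begin{align*}
\|u_{k+1}-u^*\|_G^2 = \rho\|A(x_{k+1}-x^*)\|^2 + \|y_{k+1}-y^*\|_Q^2 + \tfrac{1}{\rho}\|\lambda_{k+1}-\lambda^*\|^2,
\end{align*}
and I would bound these three blocks separately against the slack.

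The crucial algebraic simplification is the stationarity identity for the $x$-subproblem. With $P={\bf 0}$, the optimality condition of the \eqref{xexactupdate} step reads $0 = \nabla f(x_{k+1}) - A^T\lambda_k + \rho A^T(Ax_{k+1}+By_{k+1}-b)$; substituting the $\gamma=1$ multiplier update $\rho(Ax_{k+1}+By_{k+1}-b) = \lambda_k-\lambda_{k+1}$ collapses this to $A^T\lambda_{k+1} = \nabla f(x_{k+1})$, while Assumption~\ref{Ass: existence of KKT point-SI-ADMM} gives $A^T\lambda^* = \nabla f(x^*)$; subtracting yields $A^T(\lambda_{k+1}-\lambda^*) = \nabla f(x_{k+1})-\nabla f(x^*)$. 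Writing $D \triangleq \langle \nabla f(x_{k+1})-\nabla f(x^*),\, x_{k+1}-x^*\rangle$, strong convexity gives $D \geq \mu_f\|x_{k+1}-x^*\|^2$, so $\rho\|A(x_{k+1}-x^*)\|^2 \leq \rho\|A\|^2\|x_{k+1}-x^*\|^2 \leq (\rho\|A\|^2/\mu_f)\,D$; Lipschitz continuity of $\nabla f$ (equivalently, cocoercivity) gives $D \geq (1/L_f)\|\nabla f(x_{k+1})-\nabla f(x^*)\|^2$, which combined with $A$ having full row rank yields $\tfrac1\rho\|\lambda_{k+1}-\lambda^*\|^2 \leq \tfrac{1}{\rho\lambda_{\min}(AA^T)}\|A^T(\lambda_{k+1}-\lambda^*)\|^2 = \tfrac{1}{\rho\lambda_{\min}(AA^T)}\|\nabla f(x_{k+1})-\nabla f(x^*)\|^2 \leq \tfrac{L_f}{\rho\lambda_{\min}(AA^T)}\,D$. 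In case (i), $Q={\bf 0}$ drops the middle block and the convexity of $g$ contributes only a nonnegative term that is discarded, so $\|u_{k+1}-u^*\|_G^2 \leq (\rho\|A\|^2/\mu_f + L_f/(\rho\lambda_{\min}(AA^T)))\,D$; comparing with the slack $\geq 2D$ delivers exactly $\delta = 2(\rho\|A\|^2/\mu_f + L_f/(\rho\lambda_{\min}(AA^T)))^{-1}$.

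For case (ii) I would carry the same $(x,\lambda)$ bounds, which account for the $\delta_0$ portion, and handle the now-present $y$ block using strong convexity of $g$: the slack retains the term $2\langle\nabla g(y_{k+1})-\nabla g(y^*),\, y_{k+1}-y^*\rangle \geq 2\sigma_g\|y_{k+1}-y^*\|^2$, while $\|y_{k+1}-y^*\|_Q^2 \leq \|Q\|\,\|y_{k+1}-y^*\|^2$, so the $y$ block is dominated by its own slack contribution at rate $2\sigma_g/\|Q\|$. Since the total $G$-norm is the sum of three blocks, each dominated by its matching slack at rate $\delta_0$ (for $x,\lambda$) or $2\sigma_g/\|Q\|$ (for $y$), the worse of the two rates governs the aggregate, giving $\delta = \min\{\delta_0,\, 2\sigma_g/\|Q\|\}$. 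The main obstacle is not any single estimate but the faithful reconstruction of the descent slack from the proof of Theorem~\ref{linear convergence of g-ADMM}: one must verify that, after specializing to $P={\bf 0}$, $\gamma=1$, every auxiliary quantity (the proximal terms in the $x$- and $y$-updates and the dual cross terms, controlled through Assumption~\ref{Ass: objective function-SI-ADMM} and the $\gamma$-condition) either vanishes or is nonnegative, so that the slack is genuinely lower bounded by $2D$ plus the $g$-monotonicity term without any spurious negative contribution. Getting the multiplier indices and the $\gamma=1$ cancellation right in that reconstruction is the delicate bookkeeping that makes the clean constants appear.
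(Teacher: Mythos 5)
Your proposal is correct and follows essentially the same route as the paper: the paper simply quotes inequalities (3.21) and (3.23) of Deng--Yin (which are exactly the descent slack and the Term(A) bound you reconstruct via the stationarity identity $A^T\lambda_{k+1}=\nabla f(x_{k+1})$ together with strong convexity and cocoercivity of $\nabla f$), and then performs the identical block-wise comparison of $\rho\|A(x_{k+1}-x^*)\|^2$, $\|y_{k+1}-y^*\|_Q^2 \le \|Q\|\,\|y_{k+1}-y^*\|^2$, and $\tfrac1\rho\|\lambda_{k+1}-\lambda^*\|^2$ against their matching slack terms, aggregating with $\delta=\min\{\delta_0,\,2\sigma_g/\|Q\|\}$. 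Your summing of the two single-ingredient bounds on $D$ is algebraically equivalent to the convex-combination parameter $t$ appearing in the cited Term(A), so nothing is lost or gained.
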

\begin{proof}
(i) See \cite[Corollary 3.6]{deng2016global}.
(ii) From inequality (3.21) of \cite{deng2016global}, \vs{for all $t \in [0,1]$, we have}
\begin{align*}
 \| u_k - u^* \|_G^2 - \| u_{k+1} - u^* \|_G^2 
& \geq \ \overbrace{2t\mu_f \| x_{k+1} - x^* \|^2 + 2(1-t)\frac{\lambda_{\min}(AA^T)}{L_f}\| \lambda_{k+1} - \lambda^* \|^2}^{\rm Term (A)} \\
& + 2\sigma_g\| y_{k+1} - y^*\|^2 + \| u_k - u_{k+1} \|_G^2 + 2\mu_f\| x_k - x_{k+1}\|^2,
\end{align*}
and inequality (3.23) of \cite{deng2016global}:
$\mathrm{Term (A)} \geq \delta_0 \left( \rho \| A \|^2\|x_{k+1} - x^*\|^2 + \frac{1}{\rho} \| \lambda_{k+1} - \lambda^* \|^2 \right),$
\vs{implying that}
\begin{align*}
& \| u_k - u^* \|_G^2 - \| u_{k+1} - u^* \|_G^2 \\
& \geq \delta_0 \left( \rho \| A \|^2\|x_{k+1} - x^*\|^2 + \frac{1}{\rho} \| \lambda_{k+1} - \lambda^* \|^2 \right) + 2\sigma_g\| y_{k+1} - y^*\|^2 \\
& = \delta_0 \left( \rho \| A \|^2\|x_{k+1} - x^*\|^2 + \frac{1}{\rho} \| \lambda_{k+1} - \lambda^* \|^2 + \frac{2\sigma_g}{\delta_0 \|Q\|} \|Q\| \| y_{k+1} - y^*\|^2 \right) \\
& \geq  \delta_0 \min \left\{1, \frac{2\sigma_g}{\delta_0 \|Q\|} \right\} \left( \rho \| A \|^2\|x_{k+1} - x^*\|^2 + \frac{1}{\rho} \| \lambda_{k+1} - \lambda^* \|^2 + \|Q\| \| y_{k+1} - y^*\|^2 \right) \\
& \geq \min \left\{\delta_0, \frac{2\sigma_g}{\|Q\|} \right\} \| u_{k+1} - u^* \|_G^2 
 = \delta  \| u_{k+1} - u^* \|_G^2.
\end{align*}
\end{proof}
\begin{remark}
Cor.~\ref{choice of delta - SI-ADMM} is {a consequence of modifying} \cite[Cor.~3.6]{deng2016global} to include the case \vs{where } $Q \succ {\bf 0}$ \vs{and} $g$ \sv{is} strongly convex. 
{Additionally, according to \cite{deng2016global}, if the matrix $[A\ B]$ has full row rank, $\delta$ is determined by problem and algorithm parameters satisfying assumptions of Theorem~\ref{linear convergence of g-ADMM} and is 
independent of the choice of the initial point.} \qed
\end{remark}
{We define} {$y^{\rm exact}(\bullet), x^{\rm exact}(\bullet)$, and
$\lambda^{\rm exact}(\bullet)$ as the update maps} in each iteration such that
$ y_{k+1} = y^{\rm exact}(x_k, y_k, \lambda_k)$, $x_{k+1} =
x^{\rm exact}(x_k, y_{k+1}, \lambda_k)$, and  $\lambda_{k+1} = \lambda^{\rm exact}(x_{k+1}, y_{k+1}, \lambda_k).$ Furthermore, {the} map
$\Gamma^*$ is defined as follows: \begin{align}\label{def-Gamma-SI-ADMM}
	\Gamma^*(u) = \pmat{ y^{\rm exact}(x,y,\lambda) \\
						 x^{\rm exact}(x,\bar y^{\rm exact},\lambda) \\
						 \lambda^{\rm exact}(\bar x^{\rm exact}, \bar y^{\rm exact}, \lambda)},
\end{align}
where $u \triangleq (x;y;\lambda)$, $\bar y^{\rm exact} \triangleq y^{\rm exact}(x,y,\lambda)$ and $\bar x^{\rm exact} \triangleq
x^{\rm exact}(x,\bar y^{\rm exact}, \lambda).$ Furthermore, under Assumption~\ref{Ass: objective function-SI-ADMM} and \ref{Ass: P&Q-Si-ADMM}, 
	it can be seen that $y$ update and
$x$ update in Algorithm \ref{generalized admm} have unique solutions. {Thus,
	for each given input $u_k$, each iteration of Algorithm \ref{generalized admm}
	provides a unique output, denoted as $u_{k+1}^*$, {where}
	$ u_{k+1}^* := \Gamma^* (u_k). $ Thus, the map $\Gamma^*(\bullet)$
	is a well-defined single-valued map under the three assumptions. 
	Unfortunately, when the expectation
	$\mathbb{E}[\bullet]$ is over a general measure space, the ADMM
	scheme is not practically implementable since the $x$ and $y$
	updates require exact solutions of stochastic optimization problems.
This motivates an implementable stochastic generalization of this
scheme. {We conclude this subsection by restating the supermartingale convergence lemma {that}
allows for deriving a.s. convergence of the sequence.}
\begin{lemma}[{\bf\cite[Lemma 10, pg. 49]{polyak1987introduction}}] \label{lm: supermartingale}
\textit{Let $ \{ \nu_k \} $ be a sequence of nonnegative random variables, where $ \bE [ \nu_0 ] < \infty $. Let $ \{ \tau_k \} $ and $ \{ \mu_k \}$ be deterministic scalar sequences  such that \vs{$\tau_k \in (0,1)$ and $\mu_k \geq 0$ \vs{where} } 
$ \bE[ \nu_{k+1} \mid \nu_0 , \hdots, \nu_k] \leq (1 - \tau_k) \nu_k + \mu_k$ \vs{a.s.} for all $k \geq 0$, 
$\sum_{k = 0}^\infty \tau_k = \infty$, $\sum_{k=0}^\infty \mu_k < \infty$, and $\lim_{k \rightarrow \infty} \frac{\mu_k}{ \tau_k} =  0.$
Then $\nu_k \rightarrow 0$ almost surely as $k \rightarrow \infty$.}
\end{lemma}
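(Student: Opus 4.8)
The plan is to establish almost-sure convergence in two stages: first show that $\{\nu_k\}$ converges almost surely to \emph{some} nonnegative limit $\nu_\infty$, and then show that this limit must be zero. The split is forced by the structure of the hypotheses: the assumed bound is only a ``near-supermartingale'' recursion, which on its own does not pin down the value of the limit, while the conditions $\sum_k \tau_k = \infty$ and $\mu_k/\tau_k \to 0$ are precisely what will drive $\nu_\infty = 0$.

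For the first stage, I would remove the additive drift by introducing the auxiliary sequence $W_k \triangleq \nu_k + \sum_{j \geq k} \mu_j$, which is well defined and integrable since $\sum_k \mu_k < \infty$ and, by induction on the recursion, $\bE[\nu_k] \leq \bE[\nu_0] + \sum_j \mu_j < \infty$. Setting $\mathcal{F}_k \triangleq \sigma(\nu_0,\dots,\nu_k)$ and combining the assumed bound with $\tau_k \nu_k \geq 0$, one verifies that $\bE[W_{k+1}\mid \mathcal{F}_k] \leq (1-\tau_k)\nu_k + \sum_{j\geq k}\mu_j \leq W_k$, so $\{W_k\}$ is a nonnegative supermartingale. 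Doob's supermartingale convergence theorem then yields $W_k \to W_\infty$ almost surely for a finite $W_\infty \geq 0$; since $\sum_{j\geq k}\mu_j$ is the tail of a convergent series and hence vanishes, $\nu_k \to \nu_\infty \triangleq W_\infty$ almost surely.

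For the second stage, I would take expectations in the recursion to obtain the deterministic inequality $a_{k+1} \leq (1-\tau_k)a_k + \mu_k$ with $a_k \triangleq \bE[\nu_k] \geq 0$. The claim $a_k \to 0$ is a standard deterministic recursion lemma: fixing $\epsilon > 0$ and choosing $K$ with $\mu_k \leq \epsilon\tau_k$ for $k \geq K$ (possible since $\mu_k/\tau_k \to 0$), the shifted quantity satisfies $a_{k+1} - \epsilon \leq (1-\tau_k)(a_k - \epsilon)$; iterating and using $\prod_{j\geq K}(1-\tau_j) = 0$ (a consequence of $\sum_k \tau_k = \infty$ with $\tau_k \in (0,1)$) gives $\limsup_k a_k \leq \epsilon$, and letting $\epsilon \downarrow 0$ yields $\bE[\nu_k] \to 0$. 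Finally, Fatou's lemma applied to the almost-sure limit from the first stage gives $\bE[\nu_\infty] \leq \liminf_k \bE[\nu_k] = 0$, whence $\nu_\infty = 0$ almost surely, i.e. $\nu_k \to 0$ almost surely.

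I expect the crux to be the first stage: the raw recursion is not itself a supermartingale inequality, so the correct compensating term $\sum_{j\geq k}\mu_j$ must be identified before Doob's theorem applies (an alternative is to cite the Robbins--Siegmund theorem directly). Once almost-sure convergence to some limit is secured, the deterministic recursion lemma and Fatou's lemma are routine, and the respective roles of $\sum_k \mu_k < \infty$ (integrability and a.s. convergence), $\mu_k/\tau_k \to 0$, and $\sum_k \tau_k = \infty$ (forcing the limit to vanish) become transparent.
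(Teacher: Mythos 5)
Your proof is correct. Note that the paper does not prove this lemma at all — it is quoted verbatim from Polyak~\cite[Lemma 10, pg.~49]{polyak1987introduction} — so there is no in-paper argument to compare against; your two-stage argument (compensate the drift by $\sum_{j\ge k}\mu_j$ to get a nonnegative supermartingale $W_k$, apply Doob/Robbins--Siegmund to obtain a.s.\ convergence to some $\nu_\infty\ge 0$, then drive $\mathbb{E}[\nu_k]\to 0$ via the deterministic recursion using $\mu_k/\tau_k\to 0$ and $\prod_{j\ge K}(1-\tau_j)=0$, and conclude $\nu_\infty=0$ by Fatou) is exactly the standard proof of this classical result, and all steps check out, including the iteration of $a_{k+1}-\epsilon\le(1-\tau_k)(a_k-\epsilon)$, which is valid regardless of the sign of $a_K-\epsilon$ because each factor $1-\tau_j$ is positive.
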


\subsection{\vs{Stochastic approximation} for unconstrained stochastic
	convex programs}\label{subsec:uncon-SP}
Before proceeding to the stochastic {generalization}, we 
{derive a formal rate statement when employing stochastic
approximation on unconstrained stochastic convex programs} under
{relatively weaker} assumptions on the variance of gradients. Specifically, consider the \yxf{unconstrained} stochastic program:
\begin{align}\label{uncon-SP} \tag{uncon-SP}
\vs{\min_{x \in \mathbb{R}^n}}  \vs{ F(x), \mbox{ where } F(x) \triangleq \bE[\tilde F(x,\xi)]},
\end{align}
where $F: \bR^n \rightarrow \bR$, $\xi: \Omega \rightarrow \bR^d$,
	  $\tilde{F}: \bR^n \times \bR^d \rightarrow \bR$ and
	  $(\Omega, \scrF, \bP)$ denotes the probability space. {A
		   stochastic approximation (SA) \vs{scheme} for solving \eqref{uncon-SP} is defined next.}
\begin{align}\label{SA-SI-ADMM}\tag{SA}
x_{k+1} := x_k - \gamma_k \yux{\nabla_x} \tilde{F}(x_k,\xi_k), \quad {\forall k \geq 1},
\end{align}
where 
$\gamma_k > 0$ denotes the steplength. 
Suppose the history is captured by $\scrF_k \triangleq
	\{ \yxe{x_1}, \xi_1,\hdots,\xi_{k-1}\}$ for $k \geq 2$, $\scrF_1
		\triangleq \{x_1\}$. Furthermore, \yux{$\xi_1, \hdots, \xi_k$ is a sequence of random variables} and 
\yxf{
\begin{align} \label{Def: w_k}
w_k \triangleq \yux{\nabla_x} \tilde{F}(x_k,\xi_k) -
	\nabla F(x_k).
\end{align}	}
	We \vs{assume the following on} $F(x)$ and $\tilde
	F(x,\xi)$ and proceed to prove convergence \sv{of the sequence} to $x^*$, the {unique} optimal solution of \eqref{uncon-SP}.
\begin{assumption}\label{Ass: uncon-sp}
 
(1) $F(x)$ is $c-$strongly convex and 
 differentiable {with $L-$Lipschitz continuous gradients}. \vs{In addition, $\tilde F(\cdot,\xi)$ is convex for every $\xi \in \bR^d$}.\\
(2) \yux{$w_k$ satisfies the following two requirements:}\\
\yxf{(i)} (Unbiasedness). \yux{$\bE[ w_k \mid \scrF_k ] = \bold{0}$, a.s., $\forall k \ge 1$.}\\
\yxf{(ii)} (\sv{State-dependent noise}). 
 \yux{There exist scalars $v_1, v_2 > 0$ such that $\bE[ \| w_k \|^2 \mid \scrF_k] \leq v_1 \| x_k \|^2 + v_2$, a.s., $\forall k \ge 1$.}
\end{assumption} 
\begin{proposition}\label{uncons-recursive}
Suppose Assumption~\ref{Ass: uncon-sp} holds \vs{and} 
$\{ x_k \}$ denotes the sequence generated by \eqref{SA-SI-ADMM}. \vs{Then the following hold.}

\noindent (i) For any ${r} > 0$, \yux{the following} {holds \vs{a.s.} for all $k \geq 1$}:
\begin{align}\label{superMatingale-unconsp}
\notag \bE[\|x_{k+1} - x^* \|^2 \mid \scrF_k]  & \leq  \left(1 - 2c\gamma_k +
		\gamma_k^2 \left( L^2 + v_1\left(1+\frac{1}{\vvs{r}}\right) \right) \right) \|x_k
- x^* \|^2 \\ 
&  + \gamma_k^2(v_1(1+\vvs{r}) \| x^* \|^2 + v_2);
\end{align}
\noindent (ii) If $\{\gamma_k\}$ is a non-summable but square
	summable positive sequence, then $\{x_k\} \xrightarrow{k \to \infty} x^*$ a.s..
\end{proposition}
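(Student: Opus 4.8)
The plan is to prove the one-step bound \eqref{superMatingale-unconsp} in part~(i) by a direct expansion, and then to read off the sequences $\tau_k$ and $\mu_k$ from it and invoke the supermartingale lemma (Lemma~\ref{lm: supermartingale}) to obtain the almost-sure convergence in part~(ii). For part~(i), I would substitute the \eqref{SA-SI-ADMM} recursion into $\|x_{k+1}-x^*\|^2$ and expand,
\[
\|x_{k+1}-x^*\|^2 = \|x_k-x^*\|^2 - 2\gamma_k\langle\nabla\tilde F(x_k,\xi_k),x_k-x^*\rangle + \gamma_k^2\|\nabla\tilde F(x_k,\xi_k)\|^2.
\]
Writing $\nabla\tilde F(x_k,\xi_k)=\nabla F(x_k)+w_k$ and taking $\bE[\,\cdot\mid\scrF_k]$, the conditional unbiasedness $\bE[w_k\mid\scrF_k]={\bf 0}$ (which follows from $\bE[w]={\bf 0}$ together with the independence of $\xi_k$ from $\scrF_k$, so that $x_k$ acts as a constant under the conditioning) annihilates both cross terms and leaves
\[
\bE[\|x_{k+1}-x^*\|^2\mid\scrF_k] = \|x_k-x^*\|^2 - 2\gamma_k\langle\nabla F(x_k),x_k-x^*\rangle + \gamma_k^2\|\nabla F(x_k)\|^2 + \gamma_k^2\,\bE[\|w_k\|^2\mid\scrF_k].
\]

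Three estimates then close the gap. Since $x^*$ solves \eqref{uncon-SP} we have $\nabla F(x^*)={\bf 0}$, so $c$-strong convexity gives $\langle\nabla F(x_k),x_k-x^*\rangle\ge c\|x_k-x^*\|^2$ and $L$-Lipschitzness of $\nabla F$ gives $\|\nabla F(x_k)\|^2\le L^2\|x_k-x^*\|^2$; the quadratic growth property bounds $\bE[\|w_k\|^2\mid\scrF_k]\le v_1\|x_k\|^2+v_2$ (again transferring the unconditional moment bound via independence). The one non-routine step, and the reason the free parameter $r$ appears in the statement, is that the noise bound is centered at the origin rather than at $x^*$: I would handle this with Young's inequality in the form $\|x_k\|^2=\|(x_k-x^*)+x^*\|^2\le(1+1/r)\|x_k-x^*\|^2+(1+r)\|x^*\|^2$, valid for every $r>0$. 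Substituting all four bounds and collecting the coefficient of $\|x_k-x^*\|^2$ reproduces \eqref{superMatingale-unconsp} exactly.

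For part~(ii), I would set $\tau_k\triangleq 2c\gamma_k-\gamma_k^2(L^2+v_1(1+1/r))$ and $\mu_k\triangleq\gamma_k^2(v_1(1+r)\|x^*\|^2+v_2)$, so that \eqref{superMatingale-unconsp} reads $\bE[\nu_{k+1}\mid\scrF_k]\le(1-\tau_k)\nu_k+\mu_k$ with $\nu_k\triangleq\|x_k-x^*\|^2\ge 0$, and it remains to verify the hypotheses of Lemma~\ref{lm: supermartingale}. Square-summability forces $\gamma_k\to 0$, hence for all large $k$ one has $\gamma_k<2c/(L^2+v_1(1+1/r))$ and $\tau_k=\gamma_k\bigl(2c-\gamma_k(L^2+v_1(1+1/r))\bigr)\in(0,1)$, while $\mu_k\ge 0$ is immediate. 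Non-summability of $\{\gamma_k\}$ together with summability of $\{\gamma_k^2\}$ yields $\sum_k\tau_k=\infty$ and $\sum_k\mu_k<\infty$, and $\mu_k/\tau_k=\gamma_k(v_1(1+r)\|x^*\|^2+v_2)/\bigl(2c-\gamma_k(L^2+v_1(1+1/r))\bigr)\to 0$ since the numerator vanishes and the denominator tends to $2c>0$. With $\nu_1$ finite (the initial iterate is given), Lemma~\ref{lm: supermartingale} gives $\nu_k\to 0$, i.e.\ $x_k\to x^*$, almost surely. The only bookkeeping subtlety is that $\tau_k\in(0,1)$ holds merely eventually, which is harmless: convergence is asymptotic, so the recursion may be started from an index $k_0$ beyond which $\gamma_k$ is small enough.
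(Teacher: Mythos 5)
Your proposal is correct and follows essentially the same route as the paper: expand the SA recursion, use conditional unbiasedness to drop the cross terms, apply strong convexity, Lipschitz continuity of $\nabla F$, and the quadratic growth bound, split $\|x_k\|^2$ via $\|a+b\|^2\le(1+r)\|a\|^2+(1+1/r)\|b\|^2$, and then invoke Lemma~\ref{lm: supermartingale} with the same choices of $\tau_k$ and $\mu_k$. Your handling of the fact that $\tau_k\in(0,1)$ only eventually matches the paper's observation that $\tau_k>0$ for $k$ large enough, so there is nothing to add.
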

\begin{proof}
\noindent (i) {By definition of $x_{k+1}$ {from (SA)}, we have that}
\begin{align*}
 \ \| x_{k+1} - x^* \|^2 
& = \| x_k - \gamma_k \yux{\nabla_x} \tilde F (x_k,\xi_k) - x^* \|^2  \\
& = \| x_k - x^* \|^2 - 2 \gamma_k \yux{\nabla_x} \tilde F (x_k,\xi_k)^T (x_k - x^*) + \gamma_k^2 \| \yux{\nabla_x} \tilde F (x_k,\xi_k) \|^2 \\
& = \| x_k - x^* \|^2 - 2 \gamma_k \nabla F (x_k)^T (x_k - x^*)  
- 2\gamma_k w_k^T (x_k - x^*) \\
& + \gamma_k^2 \| \nabla F(x_k) \|^2  + \gamma_k^2 \| {w_k} \|^2 +
2 \gamma_k^2  w_k^T \nabla F(x_k).
\end{align*}
Taking expectations conditioned on $\scrF_k$ on both sides \yux{and noticing that $x_k$ is deterministic given $\scrF_k$, and
\begin{align*}
\bE[ w_k^T(x_k - x^*) \mid \scrF_k ] & = (x_k - x^*)^T \bE[ w_k \mid \scrF_k ] = 0 \\
\mbox{ and }  \bE[ w_k^T \nabla F(x_k) \mid \scrF_k ] & =  \nabla F(x_k)^T \bE[ w_k \mid \scrF_k ] = 0,
\end{align*}
we have that in an almost sure sense, for any $r > 0$, }
\begin{align*}
& \quad \bE[ \| x_{k+1} - x^* \|^2 \mid \scrF_k]  
 \leq \| x_k - x^* \|^2 - 2
\gamma_k \nabla F (x_k)^T (x_k - x^*) + \gamma_k^2 \| \nabla F(x_k) \|^2 + \gamma_k^2 \bE [ \| w_k \|^2 \mid \scrF_k ] \\
& \sv{ \ = \ } \| x_k - x^* \|^2 - 2 \gamma_k \sv{( \nabla F (x_k) - \nabla F(x^*) )}^T (x_k - x^*) 
 + \gamma_k^2 \| \nabla F(x_k) - \nabla F(x^*) \|^2 + \gamma_k^2 \bE {[ \| w_k \|^2 \mid \scrF_k ]} \\
& \leq \| x_k - x^* \|^2 - 2 \gamma_k c \| x_k - x^* \|^2 + \gamma_k^2 L^2 \| x_k - x^* \|^2 + \gamma_k^2 ( v_1 \| x_k \|^2 + v_2 ) \\
& \leq \| x_k - x^* \|^2 - 2 \gamma_k c \| x_k - x^* \|^2 + \gamma_k^2 L^2 \| x_k - x^* \|^2 \\
& \yxf{ + \gamma_k^2 \left( v_1 \left( \left(1 + \frac{1}{r} \right) \| x_k - x^* \|^2 + (1+r) \| x^* \|\sv{^2} \right) + v_2 \right) } \\
& = \left(1 - 2 c \gamma_k + \gamma_k^2 \left( L^2 + v_1\left(1 +
				\frac{1}{{r}}\right) \right) \right) \| x_k - x^* \|^2 + \gamma_k^2 \left( v_1(1 + \vvs{r}) \| x^* \|^2 + v_2 \right),
\end{align*}
where $ \nabla F(x^*) = {\bf 0}$ implies the \sv{first equality}, {the
	strong convexity of $F$ and the Lipschitz continuity of $\nabla F$,
		   together \sv{with Ass.~\ref{Ass: uncon-sp} (2(ii))}}
{lead to} the \yux{second} inequality,
	{while for all $a, b$ and $r > 0$, the \yux{third} inequality follows from}
\begin{align} \label{Cauchy-SIADMM}
& \| a+b \|^2 \leq (1+{r}) \| a \|^2 + (1+1/{r}) \| b \|^2.
\end{align}
\noindent (ii) Let $\tau_k \triangleq 2c \gamma_k - \gamma_k^2 (L^2 + v_1(1+1/r))$, $\mu_k \triangleq \gamma_k^2(v_1(1+r)\| x^* \|^2 + v_2) $.  Since $\sum_{k=1}^\infty \gamma_k = +\infty$, $\sum_{k=1}^{\infty} \gamma_k^2 < +\infty$, $c < L$, we have that $\tau_k < 1, \forall k \geq 1$, $\tau_k > 0$ for $k$ large enough, $\sum_{k=1}^\infty \tau_k = +\infty$, $\sum_{k=1}^{\infty} \mu_k < +\infty$, $\lim_k \frac{\mu_k}{\tau_k} = 0$. Therefore, by Lemma~\ref{lm: supermartingale}, 
   \vs{$ \| x_k - x^* \|^2 \xrightarrow{k \to \infty} 0$ in an a.s. sense.}
\end{proof}
\begin{remark}
We observed after the writing of this paper that
a result similar to (ii) was derived in \cite[Prop.~4.1 \vs{and Ex.~4.2}]{Bertsekas:1996:NP:560669}. \vs{Under}
weaker assumptions on  $F$ ($\nabla F$ is Lipschitz continuous but $F$ is not
necessarily convex, $\bE[\| w_k \|^2 \mid \Fscr_k] \leq v_1 \| \nabla F(x_k)
\|^2 + v_2$ \sv{a.s.}) \vs{and a possibly random sequence} $\gamma_k$, \vs{it is shown that $\lim_{k
\rightarrow \infty} \nabla F(x_k) = 0$. Statement (ii) in our result can be
viewed as a corollary of this result when $F(x)$ is strongly convex. However,
(i) does not appear to have been proven and serves as a vital requirement 
for proving \vs{our subsequent asymptotic and rate statements}}.\qed
\end{remark}

 If we take {un}conditional
   expectations on both sides of \eqref{superMatingale-unconsp} and define $e_k \triangleq \bE{[\|x_k - x^* \|^2 ]}$, 
$M \triangleq  L^2 + v_1(1+\frac{1}{r})$,  and $C
\triangleq  v_1(1+r) \| x^* \|^2 + v_2$, then
\begin{align}\label{RecursiveIneq-unconsp}
e_{k+1} \leq (1 - 2c \gamma_k + \gamma_k^2 M) e_k + \gamma_k^2C, \qquad  \forall k \geq 1,
\end{align}
{Next, we derive the rate statement for}  $e_k$ from
\eqref{RecursiveIneq-unconsp}. \yxe{This \vs{result is essential in analyzing (SI-ADMM)} in the next section.}
\begin{theorem} \label{thm: conv-rate-of-meansquare-unconsp}
\yxf{\sv{Suppose} $\{ x_k \}$ \sv{denotes} the sequence generated by \eqref{SA-SI-ADMM}} \yxe{\sv{and} Assumption~\ref{Ass: uncon-sp} holds.} \sv{Let} $\gamma_k = \yxf{\gamma_0}/k$, where $\gamma_0 > \frac{1}{2c}$. Let $K {\triangleq} \lceil \frac{\gamma_0^2M}{2c \gamma_0 - 1}
\rceil + 1$ and $Q{(\gamma_0,K)}  \triangleq \max \left\{ \frac{\gamma_0^2 C}{2c\gamma_0 - \gamma_0^2M/K - 1} , Ke_K \right\}$. Then 
\begin{align} \label{conv-rate-of-meansquare-unconsp}
e_k \leq \frac{{Q(\gamma_0,K)}}{k}, \sv{\qquad \mbox{ for any $k \geq K$}}.
\end{align}
\begin{proof} {We prove the result by \sv{conducting an} induction on $k$.} 
When $k = K$, \eqref{conv-rate-of-meansquare-unconsp} holds {trivially}. Suppose \eqref{conv-rate-of-meansquare-unconsp} holds at arbitrary $k \geq K$, then by \eqref{RecursiveIneq-unconsp}, 
\begin{align*}
e_{k+1} 
& \leq (1 - 2c\gamma_k + \gamma_k^2 M) e_k + \gamma_k^2 C 
 \leq (1 - 2c\gamma_0/k + \gamma_0^2 M/k^2 ) Q{(\gamma_0,K)}/k + \gamma_0^2 C/k^2 \\
& = \left( 1 - 2c \frac{\gamma_0}{k} + \gamma_0^2 \frac{M}{k^2} + \gamma_0^2 \frac{C}{kQ{(\gamma_0,K)}} \right) \frac{Q(\gamma_0,K)}{k}.
\end{align*}
By definition of $Q{(\gamma_0,K)}$, $Q{(\gamma_0,K)}/(\gamma_0^2 C) \geq 1/(2c\gamma_0-\gamma_0^2 M /K -1)$, implying that
\begin{align*}
 e_{k+1} 
& \leq  \left( 1 -\frac{2c\gamma_0}{k} + \frac{\gamma_0^2 M}{k^2} + \frac{2c\gamma_0 - \frac{\gamma_0^2M}{K} - 1}{k} \right) \cdot \frac{ Q{(\gamma_0,K)}}{k}\\
& = \left(1-\frac{1}{k} + \frac{\gamma_0^2M}{k^2} -  \frac{\gamma_0^2M}{(K \cdot k)} \right) \cdot \frac{(k+1)}{k} \cdot \frac{ Q{(\gamma_0,K)}}{(k+1)} \\
& =  \left( \frac{k-1}{k} + \frac{\gamma_0^2M}{k} \cdot \left(\frac{1}{k} - \frac{1}{K} \right) \right) \cdot \frac{(k+1)}{k} \cdot \frac{ Q{(\gamma_0,K)}}{(k+1)} 
 \leq \frac{k-1}{k} \cdot \frac{(k+1)}{k} \cdot \frac{ Q{(\gamma_0,K)}}{(k+1)}  < \frac{Q{(\gamma_0,K)}}{k+1},
\end{align*}
where the \vs{penultimate} inequality \vs{follows from} $k \geq K \Rightarrow 1/k - 1/K \leq 0$.
\end{proof}
\end{theorem}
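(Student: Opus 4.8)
The plan is to establish \eqref{conv-rate-of-meansquare-unconsp} by induction on $k \geq K$, taking the deterministic scalar recursion \eqref{RecursiveIneq-unconsp} as the sole analytic input and reducing everything thereafter to elementary algebra. The base case $k = K$ is immediate: by the definition $Q(\gamma_0,K) \geq K e_K$, we have $e_K \leq Q(\gamma_0,K)/K$, which is exactly \eqref{conv-rate-of-meansquare-unconsp} at $k=K$. For the inductive step I would assume $e_k \leq Q(\gamma_0,K)/k$ for some $k \geq K$ and substitute both this bound and $\gamma_k = \gamma_0/k$ into \eqref{RecursiveIneq-unconsp}, obtaining
\[
e_{k+1} \leq \Big(1 - \tfrac{2c\gamma_0}{k} + \tfrac{\gamma_0^2 M}{k^2}\Big)\tfrac{Q(\gamma_0,K)}{k} + \tfrac{\gamma_0^2 C}{k^2},
\]
and then factoring out $Q(\gamma_0,K)/k$ so that the inhomogeneous term appears as $\gamma_0^2 C / (k\, Q(\gamma_0,K))$ inside a single bracket.

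The crucial step is to control that bracketed inhomogeneous term using the definition of $Q(\gamma_0,K)$. Since $Q(\gamma_0,K) \geq \gamma_0^2 C / (2c\gamma_0 - \gamma_0^2 M/K - 1)$, one has $\gamma_0^2 C / Q(\gamma_0,K) \leq 2c\gamma_0 - \gamma_0^2 M/K - 1$, so the bracket is bounded above by
\[
1 - \tfrac{2c\gamma_0}{k} + \tfrac{\gamma_0^2 M}{k^2} + \tfrac{2c\gamma_0 - \gamma_0^2 M/K - 1}{k} = \tfrac{k-1}{k} + \tfrac{\gamma_0^2 M}{k}\Big(\tfrac{1}{k} - \tfrac{1}{K}\Big).
\]
Because $k \geq K$ forces $1/k - 1/K \leq 0$, the second term is nonpositive and the bracket is at most $(k-1)/k$. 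Multiplying through by $Q(\gamma_0,K)/k$ and invoking the elementary inequality $(k-1)(k+1) = k^2 - 1 < k^2$ then yields $e_{k+1} < Q(\gamma_0,K)/(k+1)$, which closes the induction.

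The part that needs care is not any single estimate but the prior calibration of the constants $K$ and $Q(\gamma_0,K)$ so that the recursion actually contracts at the $1/k$ scale. The hypothesis $\gamma_0 > 1/(2c)$ guarantees $2c\gamma_0 - 1 > 0$, and choosing $K = \lceil \gamma_0^2 M/(2c\gamma_0 - 1)\rceil + 1$ is precisely what makes $2c\gamma_0 - \gamma_0^2 M/K - 1 > 0$, keeping the denominator in the first argument of the $\max$ defining $Q(\gamma_0,K)$ strictly positive. With these choices the two competing requirements --- that $Q(\gamma_0,K)$ be large enough to dominate the inhomogeneous term, yet that the homogeneous factor still decay like $(k-1)/k$ --- are simultaneously met, and the $\max$ in the definition of $Q(\gamma_0,K)$ is exactly what reconciles the base case with the inductive contraction.
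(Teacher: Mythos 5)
Your proposal is correct and follows essentially the same induction argument as the paper: the same base case from $Q(\gamma_0,K)\geq Ke_K$, the same substitution and factoring of $Q(\gamma_0,K)/k$, the same use of $\gamma_0^2 C/Q(\gamma_0,K)\leq 2c\gamma_0-\gamma_0^2M/K-1$ to absorb the inhomogeneous term, and the same final bound via $1/k-1/K\leq 0$ and $(k-1)(k+1)<k^2$. Your added remark on why the choice of $K$ keeps the denominator positive is a useful clarification but not a departure from the paper's route.
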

\begin{remark} \label{Choices of a and b}
To explicitly bound $e_K$ using $e_1$, {we may derive a bound for all $k$ as follows}: 
	Suppose $a_k \triangleq 1 - 2c \gamma_k + \gamma_k^2 M$ for $k \geq
	1$. {It follows that $e_{\usss{k+1}} \leq a_k e_k + \gamma_k^2 C$. By a suitable
	choice of steplength sequence $\{\gamma_k\}$, we may guarantee that
	$a_k > 0$ for all $k \geq 1$,} allowing for constructing a general
	inequality for $e_{k+1}$ given by the following for $k \geq 2.$
	{$$e_{k+1} \leq \left(\prod_{i=1}^k a_i\right)e_1 + C \left(
			\sum_{i=1}^{k-1} \left(\gamma_i^2 \prod_{j=i+1}^k a_j\right)
			+ \gamma_k^2 \right).$$} 
	We may then show that $e_K \leq \hat a e_1 + \hat b C$, where 
$\hat a \triangleq  \prod_{i=1}^{K-1} a_i, K \geq 2$
	and $$\hat b \triangleq 
\begin{cases}
\gamma_1^2, & \mbox{if } K = 2, \\
\sum_{i=1}^{K-2} (\gamma_i^2  a_{i+1} \cdot \hdots \cdot a_{K-1}) +
\gamma_{K-1}^2, & \mbox{if } K \geq 3.
\end{cases}
$$ 
We \vs{denote $(\hat a, \hat b)$ by $(a_x,b_x)$ (for $x$-update) or $(a_y,b_y)$ (for $y$-update) 
in later subsections. } \qed
\end{remark}

\subsection{SI-ADMM: A stochastic inexact ADMM scheme} \label{subsec: SI-ADMM}
Since \eqref{yexactupdate} and \eqref{xexactupdate} in Alg.~\ref{generalized admm}
necessitate exact solutions {impossible to obtain in stochastic
	regimes}, we {propose} a {\em stochastic inexact} \vs{ADMM} ({\bf SI-ADMM}), an
extension of the generalized ADMM scheme, \vs{in which} the sequence of
iterates generated by the scheme are random variables, \vs{each of which requires}
taking a finite (but) increasing number of (stochastic) gradient steps. \yxe{Here $T_{k+1}^x-1$ and $T_{k+1}^y-1$ denote the number of sampled gradients generated within the $x$ and $y$ update to ensure meeting \vs{a} suitable error criterion.} The ({\bf SI-ADMM}) is defined in Alg.~\ref{s-admm}, where
{
\begin{align}
 \tilde{\Lscr}_1 (x, y, \lambda,\xi^x) & \triangleq  \tilde{f}(x, \xi^x) + g(y) - \lambda^T(Ax + By - b)
\label{L1-SIADMM}
+ \frac{\rho}{2} \| Ax + By - b \|^2,\\
 \tilde{\Lscr}_2 (x, y, \lambda,\xi^y) & \triangleq  f(x) + \tilde{g}(y,\xi^y) - \lambda^T(Ax + By - b)
\label{L2-SIADMM}
+ \frac{\rho}{2} \| Ax + By - b \|^2.
\end{align}
}
\begin{algorithm}
\caption{{\bf SI-ADMM:} A stoch. inexact ADMM scheme}
\label{s-admm}
\begin{enumerate}
\item[(0)] Choose $Q$, $P$ and sequences $\{ T_{k+1}^y, T_{k+1}^x\}_{k \geq 0}$ as number of samples generated for updates \eqref{Ty-SIADMM} and \eqref{Tx-SIADMM}; Given positive scalars $\rho, \gamma, \gamma_x, \gamma_y$ and initial points $x_0, y_0,\lambda_0$; Let $k := 0$;
\item[(1)] Let $x_{k+1}, y_{k+1}, \lambda_{k+1}$ be given by the following:
\begin{align} 
\notag
	& y_{k,j+1} := y_{k,j} - \frac{\gamma_{y}}{j}[\nabla_y  \tilde{\mathcal{L}}_{2}(x_k,y_{k,j},\lambda_k,\xi^y_{k,j}) + Q(y_{k,j} - y_k) ],\\
\notag
	& \qquad \qquad \  j = 1, \hdots, T_{k+1}^y -1, y_{k,1} := y_k ,\\
&	\pmb{y_{k+1} := y_{k, T_{k+1}^y}} \label{Ty-SIADMM} \\
\notag
	& x_{k,j+1} := x_{k,j} - \frac{\gamma_{x}}{j} [\nabla_x \tilde{\mathcal{L}}_{1}(x_{k,j},y_{k+1},\lambda_k,\xi^x_{k,j}) + P(x_{k,j} - x_k) ], \\
\notag
     & \qquad \qquad \  j = 1, \hdots,T_{k+1}^x -1,  x_{k,1} := x_k, \\
&   \pmb{x_{k+1} := x_{k, T_{k+1}^x}} \label{Tx-SIADMM} \\
& \lambda_{k+1} := \lambda_k - \gamma \rho( Ax_{k+1} + By_{k+1} - b). 
\end{align}
\item [(2)] \yxf{$k:=k+1$, return to (1).}
\end{enumerate}
\end{algorithm}
{$y_{k+1}$ and $x_{k+1}$ can be viewed as inexact solutions to} \eqref{yexactupdate} and \eqref{xexactupdate} in Alg.~\ref{generalized admm}, respectively, obtained through a standard stochastic approximation \sv{scheme}. Therefore, the sequence $u_{k+1} = (x_{k+1}; y_{k+1}; \lambda_{k+1})$, $\forall k \geq 0$ is also inexact.
\yxe{\sv{Suppose the sequence $\{ \xi^y_{k,1}, \hdots, \xi^y_{k,T^y_{k+1}-1}, \xi^x_{k,1}, \hdots, \xi^x_{k,T^x_{k+1}-1}\}_{k \geq 0}$}
 \vs{represents the sequence of realizations} of $\xi$. We denote the history of the algorithm as follows. First, $\Fscr_0 \triangleq
		\{x_0,y_0,\lambda_0\}$. Then at the \uvs{$(k+1)$}th epoch ($k \geq 0$),
		$\Fscr_{\uss{k+1}}^y \triangleq \Fscr_{\uss{k}} \cup
			\{\xi^y_{k,1}, \hdots, \xi^y_{k,T_{k+1}^y-1} \}$,
		$\Fscr_{\uss{k+1}} \triangleq \Fscr_{{\uss{k+1}}}^y \cup \{\xi^x_{k,1}, \hdots, \xi^x_{k,T_{k+1}^x-1} \}$.  }
Moreover, we may define a map
$\Gamma_k(u)$, akin to the map $\Gamma^*(u)$ specified in
\eqref{def-Gamma-SI-ADMM},
\begin{align}\label{def-Gamma-k-SIADMM}
	\Gamma_k(u) \uvs{ \ \triangleq \ } \pmat{ y_{\uss{k}}^{\rm s-inex}(x,y,\lambda) \\
						 x_{\uss{k}}^{\rm s-inex}(x, \tilde y_{\uss{k}}^{\rm s-inex}, \lambda) \\
						 \lambda_{\uss{k}}^{\rm s-inex}(\tilde x_{\uss{k}}^{\rm s-inex}, \tilde y_{\uss{k}}^{\rm s-inex}, \lambda)},
\end{align}
{where $u = (x;y;\lambda)$, $y_k^{\rm s-inex}, x_k^{\rm s-inex}, \lambda_k^{\rm s-inex}$ are three maps such that $y_k = y_{\uss{k}}^{\rm s-inex}(x_{k-1},y_{k-1},\lambda_{k-1})$, $ x_k = x_k^{\rm s-inex}(x_{k-1}, y_k,\lambda_{k-1})$, $\lambda_k = \lambda_k^{\rm s-inex}(x_k, y_k, \lambda_{k-1})$. Moreover, $\tilde y_{\uss{k}}^{\rm s-inex} \triangleq y_{\uss{k}}^{\rm s-inex}(x,y,\lambda)$, $\tilde x_k^{\rm s-inex} \triangleq x_k^{\rm s-inex}(x, \tilde y_{\uss{k}}^{\rm s-inex},\lambda)$. Consequently, $\Gamma_k(u)$
	is a single-valued random map.} 
Its image depends on both the input and samples. 
	To ensure convergence of {$\{ u_k \}$}, we \vvs{assume the following}. 
	\begin{assumption} \label{Ass: Strong convexity} \vs{The} functions $f$ and $g$ are both
	{strongly} convex in their respective arguments with constants
	$\mu_f$ and $\sigma_g$, respectively. {In addition, $f$ and $g$ are both
	continuously differentiable with Lipschitz continuous gradients with
	constants $L_f$ and $L_g$, respectively.}
\end{assumption}
\begin{assumption}\label{Ass: Full Row Rank}  
The matrix $[A\quad B]$ has full row rank.
\end{assumption}

\begin{remark} \label{Rmk: Uniqueness of KKT point}
Assumptions \ref{Ass: existence of KKT point-SI-ADMM}, \ref{Ass: Strong convexity}, and \ref{Ass: Full Row Rank} are sufficient to claim that
there {exists a unique} triple {$(x^*;y^*;\lambda^*) = u^* $} that {satisfies} the KKT conditions:
 $A^T \lambda^*  = \nabla f(x^*),
 B^T \lambda^*  = \nabla g(y^*),  \mbox{ and }
 Ax^* + By^*  = b.$ 
Since $f$ and $g$ are {strongly} convex, \eqref{SOpt} has a unique
{primal} optimal solution. {Furthermore, since the KKT conditions are necessary and
sufficient, they} {admit} a unique  {solution tuple} 
$(x^*,y^*)$. Since $[A\quad B]$ has {linearly independent rows,} $\lambda^*$ is
uniquely determined by $(x^*,y^*)$. Since the KKT point $u^*$ is
unique and the \sv{contractive} property in Theorem \ref{linear convergence of
g-ADMM} holds for the same $u^*$ regardless of initial point\sv{;} i.e., inequality
\eqref{contraction of iterates} holds for every $u_k$ for the same $u^*$.
Moreover, Assumption \ref{Ass: Strong convexity} implies Assumption \ref{Ass: objective function-SI-ADMM}(d), \sv{while} Assumption
\ref{Ass: Full Row Rank} can be made without loss of generality. \qed
\end{remark}

\subsection{Error bounds and choice of $T_{k+1}^y$ and $T_{k+1}^x$}\label{subsec: choice of sample size}
\yxe{In this section, we will present some error bounds and \vs{derive a}
recursive inequality (Lemma~\ref{lm: main recursive inequality}) important in
proving the main result in Section~\ref{sec: convergence and rate-SIADMM}}. {We
begin by providing \yux{a necessary moment assumption}. Throughout the remainder of the paper, we assume that the \sv{sequence} \\
\sv{$\{\xi^y_{k,1}, \hdots, \xi^y_{k,T^y_{k+1}-1}, \xi^x_{k,1}, \hdots, \xi^x_{k,T^x_{k+1}-1}\}_{k \geq 0}$}  generated by Algorithm~\ref{s-admm} is i.i.d. 
 
\begin{assumption}\label{Ass: ConditionalUnbiased} 
\yux{
For any given $x \in \bR^n, y \in \bR^m, \xi \in \Xi$, 
let $w^x \triangleq \nabla_x \tilde f(x,\xi) - \nabla_x f(x)$ and $w^y \triangleq \nabla_y \tilde g(y,\xi) - \nabla_y g(y)$. For any $x \in \bR^n, y \in \bR^m$, 
$\mathbb{E}[w^x] = {\bf 0}$, $\mathbb{E}[w^y] =
{\bf 0}$. Furthermore, there exists constants $v_1^x, v_2^x, v_1^y, v_2^y$, such that for any $x \in \bR^n, y \in \bR^m$,  $\mathbb{E}[\|w^x \|^2] \leq v_1^x \|x\|^2 +
v^x_2$ and $\mathbb{E}[\|w^y \|^2] \leq v_1^y \|y\|^2 +
v^y_2$.} 
\end{assumption}
Suppose {$\bf c_y$} and {$\bf c_x$} {denote the
		 {strong} convexity} constants of $\mathcal L_\rho(x,y,\lambda)
		 + \frac{1}{2} \| y - z \|_Q^2$ and $\mathcal L_\rho(x,y,\lambda) +
		 \frac{1}{2} \| x - z \|_P^2$ in
		 terms of $y$ {(uniformly in $x$)}  and $x$ {(uniformly in
				 $y$)}, respectively, where $z \in \bR^n$ can be taken
		 as any vector. {Similarly, suppose} {$\bf L_y$} and {$\bf L_x$}
		 denote the Lipschitz constants of $\nabla_y \mathcal
		 L_A(x,y,\lambda) + Q(y-z)$ and $\nabla_x \mathcal
		 L_A(x,y,\lambda) + P(x-z)$ in terms of $y$ and $x$,
	 respectively, where $z \in \bR^n$ again can be taken as any vector.
		 Note that $c_y, c_x, L_y, L_x$ are {assumed to be} independent of $x$, $y$,
	 $\lambda$ {and} $z$.  {From definitions \eqref{L1-SIADMM} \sv{and} \eqref{L2-SIADMM}, the following equations hold.}
\begin{align*} 
& \nabla_x \tilde{\Lscr}_1 (x, y, \lambda, \xi^x) + P(x - z_1)
- \nabla_x \Lscr_\rho (x, y, \lambda) - P(x - z_1) 
 = \underbrace{\nabla_x \tilde{f}(x,\xi^x) - \nabla_x f(x)}_{\vs{\triangleq  w^x}}, \forall x,y,\lambda, z_1,\\
& \nabla_y \tilde{\Lscr}_2 (x, y, \lambda,\xi^y) + Q(y-z_2) - \nabla_y
\Lscr_\rho (x, y, \lambda) - Q(y-z_2) 
 = \underbrace{\nabla_y \tilde{g}(y, \xi^y) -
\nabla_y g(y)}_{\vs{\triangleq  w^y}},  \forall x,y,\lambda, z_2.
\end{align*} 
In fact, $\nabla_x \tilde{\Lscr}_1 (x, y, \lambda,\xi^x) + P(x-z_1)$ and
$\nabla_y \tilde{\Lscr}_2 (x, y, \lambda,\xi^y) + Q(y - z_2)$ are the
stochastic gradients utilized in the  inexact updates
\vs{\eqref{Ty-SIADMM}--\eqref{Tx-SIADMM}}  of Algorithm~\ref{s-admm},
respectively, where $z_1$ and $z_2$ are taken as the iterates $x,y$ of last
iteration. Next, {we derive bounds for  updates \eqref{Ty-SIADMM} and
\eqref{Tx-SIADMM} in Lemma~\ref{Lm: errorbound-SIADMM} by recalling that both involve} solving unconstrained
stochastic approximation {problems}. \yxe{Therefore, the proof follows \vs{in a similar fashion to that
in} Theorem~\ref{thm: conv-rate-of-meansquare-unconsp}. \vs{In fact},  
the bounds derived for the subproblems will be used to \vs{obtain a} 
bound for the major iterates \vs{generated by Algorithm~\ref{s-admm}} (Lemma~\ref{lm: main recursive inequality}).}
Before presenting the Lemma, let us define $y_{k+1}^*, x_{k+1}^*, \tilde
x_{k+1}^*, \lambda_{k+1}^*,u_{k+1}^*$ as follows:
\begin{align}
\label{y_k_star-SIADMM}
& y_{k+1}^* \triangleq \mbox{arg}\hspace{-0.02in}\min_{y} \{ \Lscr_\rho(x_k,y,\lambda_k) + \frac{1}{2}  \| y-y_k \|^2_Q \}, \\
\label{x_k_star-SIADMM}
& x_{k+1}^* \triangleq \mbox{arg}\hspace{-0.02in}\min_{x} \{ \Lscr_\rho(x,y_{k+1}^*,\lambda_k) + \frac{1}{2}\| x-x_k \|^2_P \}, \\
\label{x_tilde_k_star-SIADMM}
& \tilde{x}_{k+1}^* \triangleq \mbox{arg}\hspace{-0.02in}\min_{x} \{ \Lscr_\rho(x,y_{k+1},\lambda_k) + \frac{1}{2}\|x-x_k\|^2_P \}, \\
\label{lambda_k_star-SIADMM}
& \lambda_{k+1}^* \triangleq \lambda_k - \gamma \rho (Ax_{k+1}^* + By_{k+1}^* - b), \\
\label{u_k_star-SIADMM}
& {u^*_{k+1}} \triangleq (x_{k+1}^*; y_{k+1}^*; \lambda_{k+1}^*) = \Gamma^*((x_k;y_k;\lambda_k)).
\end{align}
\sv{Furthermore,} the parameters $C_y^{k+1}, M_y, K_y, C_x^{k+1}, M_x, K_x$ \sv{are defined as follows.}
\begin{align}
\label{Def: M_y and K_y}
& C_y^{(k+1)} \triangleq v_1^y(1+R)\| y_{k+1}^* \|^2 + v_2^y,
\quad M_y \triangleq L_y^2 + v_1^y \left(1 + \frac{1}{R} \right), \quad
K_y \triangleq \lceil \frac{\gamma_y^2M_y}{2c_y\gamma_y - 1} \rceil + 1, \\
\label{Def: M_x and K_x}
& C_x^{(k+1)} \triangleq v_1^x(1+R)\|\vvs{\tilde x}_{k+1}^* \|^2 + v_2^x, \quad M_x \triangleq L_x^2 + v_1^x \left(1 + \frac{1}{R} \right), \quad
K_x \triangleq \lceil \frac{\gamma_x^2M_x}{2c_x\gamma_x - 1} \rceil + 1. 
\end{align}
{\begin{lemma}\label{Lm: errorbound-SIADMM}
Consider Algorithm~\ref{s-admm}. Suppose \vs{Assumptions}~\ref{Ass: Strong convexity} and \ref{Ass: ConditionalUnbiased} hold. {Then \vs{for} any $R > 0$, 
the following holds \vvs{a.s.} for all $k \geq 0$ for updates \eqref{Ty-SIADMM} and \eqref{Tx-SIADMM}.} 
\begin{align}
\label{y-errorbound}
 \bE[ \| y_{k+1} - y_{k+1}^* \|^2 \mid \scrF_k ]  & \leq \frac{\theta^{k+1}_y}{T_{k+1}^y}, \mbox{if } T_{k+1}^y \geq K_y, \\
\label{x-errorbound}
 \bE[ \| x_{k+1} - \tilde x_{k+1}^* \|^2 \mid \scrF_{k+1}^y ] 
 & \leq \frac{\theta^{k+1}_x}{T_{k+1}^x}, \mbox{if } T_{k+1}^x \geq K_x,
\end{align}
where
\begin{align}
& \notag \theta^{k+1}_y \triangleq \max \left\{ \frac{\gamma_y^2 C_y^{(k+1)}}{2 c_y \gamma_y - \gamma_y^2 M_y/K_y - 1}, K_y \bar{e}_{K_y}^{y,k+1} \right\}, 
 \notag \theta^{k+1}_x \triangleq \max \left\{\frac{\gamma_x^2 C_x^{(k+1)}}{2 c_x \gamma_x - \gamma_x^2 M_x/K_x - 1}, K_x \bar{e}_{K_x}^{x,k+1} \right\},  
\end{align}
$\bar{e}_{K_y}^{y,k+1} = \bE[\| y_{k,K_y} - y_{k+1}^* \|^2 \mid \yxe{\Fscr_k}] \leq a_y \| y_k - y_{k+1}^* \|^2 + b_yC_y^{(k+1)}$ a.s.,\\ 
$\bar{e}_{K_x}^{x,k+1} = \bE[\| x_{k,K_x} - \tilde{x}_{k+1}^* \|^2 \mid \yxe{\Fscr_{k+1}^y} ] \leq a_x \| x_k - \tilde
x_{k+1}^* \|^2 + b_xC_x^{(k+1)}$ a.s., \\
{and $a_y, b_y, a_x, b_x$ are {constants determined by problem parameters $c_y, c_x, \gamma_y, \gamma_x, L_y, L_x$, and $R$.}}
 \end{lemma}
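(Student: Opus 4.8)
The plan is to recognize that each inner loop of Algorithm~\ref{s-admm}---the $y$-update \eqref{Ty-SIADMM} and the $x$-update \eqref{Tx-SIADMM}---is precisely the stochastic approximation recursion \eqref{SA-SI-ADMM} applied to a strongly convex unconstrained subproblem, so that the two bounds follow by invoking Theorem~\ref{thm: conv-rate-of-meansquare-unconsp} \emph{conditionally}, with a careful identification of constants. The only genuine work is to verify that the hypotheses of that theorem hold along the inner iterations once one conditions on the correct $\sigma$-field.

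For the $y$-update I would first condition on $\scrF_k$. Since $x_k, y_k, \lambda_k$ are $\scrF_k$-measurable, the program \eqref{y_k_star-SIADMM} defining $y_{k+1}^*$ becomes a fully specified deterministic problem $\min_y \{ \Lscr_\rho(x_k,y,\lambda_k) + \frac{1}{2}\|y-y_k\|_Q^2 \}$, so that $y_{k+1}^*$ plays the role of the fixed optimum (the analogue of $x^*$ in the theorem). By the definition of $c_y$ and $L_y$ this objective is $c_y$-strongly convex with an $L_y$-Lipschitz gradient, and by the identity preceding \eqref{y_k_star-SIADMM} the error between the stochastic gradient used in \eqref{Ty-SIADMM} and the true gradient is exactly $w^y$; Assumption~\ref{Ass: ConditionalUnbiased} together with i.i.d. sampling then supplies conditional unbiasedness and the conditional quadratic-growth bound $\bE[\|w^y_{k,j}\|^2 \mid \cdot] \le v_1^y \|y_{k,j}\|^2 + v_2^y$ along the iterates. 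The inner index $j$, carrying steplength $\gamma_y/j$, plays the role of the theorem's index with $\gamma_0 = \gamma_y$, and the Cauchy-type split \eqref{Cauchy-SIADMM} produces exactly the constants $M_y$ and $C_y^{(k+1)}$ of \eqref{Def: M_y and K_y}. Evaluating the theorem's estimate \eqref{conv-rate-of-meansquare-unconsp} at the terminal inner index $T_{k+1}^y$ (with $Q(\gamma_0,K)$ replaced by $\theta_y^{k+1}$ and $e_K$ by $\bar e_{K_y}^{y,k+1}$) yields \eqref{y-errorbound} whenever $T_{k+1}^y \ge K_y$.

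The $x$-update is treated identically, except that the conditioning must now be on $\scrF_{k+1}^y$ rather than $\scrF_k$: the subproblem \eqref{x_tilde_k_star-SIADMM} involves the inexact iterate $y_{k+1}$, so $\tilde x_{k+1}^*$ is $\scrF_{k+1}^y$-measurable but not $\scrF_k$-measurable. Conditioned on $\scrF_{k+1}^y$ the program $\min_x \{ \Lscr_\rho(x,y_{k+1},\lambda_k) + \frac{1}{2}\|x-x_k\|_P^2 \}$ is deterministic with optimum $\tilde x_{k+1}^*$, is $c_x$-strongly convex with an $L_x$-Lipschitz gradient, and its stochastic gradient differs from the true one by $w^x$; the same application of Theorem~\ref{thm: conv-rate-of-meansquare-unconsp}, now with constants $(c_x,L_x,\gamma_x,M_x,C_x^{(k+1)},K_x,\theta_x^{k+1})$, delivers \eqref{x-errorbound} whenever $T_{k+1}^x \ge K_x$.

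The auxiliary bounds on $\bar e_{K_y}^{y,k+1}$ and $\bar e_{K_x}^{x,k+1}$ then follow directly from Remark~\ref{Choices of a and b}: instantiating the general recursion for $e_K$ there with the initial value $e_1 = \|y_k - y_{k+1}^*\|^2$ (respectively $e_1 = \|x_k - \tilde x_{k+1}^*\|^2$), which is deterministic under the relevant conditioning, produces $a_y \|y_k - y_{k+1}^*\|^2 + b_y C_y^{(k+1)}$ (respectively $a_x \|x_k - \tilde x_{k+1}^*\|^2 + b_x C_x^{(k+1)}$), where $(a_y,b_y)$ and $(a_x,b_x)$ are the pairs $(\hat a, \hat b)$ of that remark instantiated for each subproblem. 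I expect the main obstacle to be purely measure-theoretic bookkeeping rather than any new estimate: one must confirm that $y_{k+1}^*$ and $\tilde x_{k+1}^*$ are measurable with respect to their conditioning fields, so that they legitimately replace the fixed optimum of the unconstrained theorem, and that the unconditional moment bound of Assumption~\ref{Ass: ConditionalUnbiased} transfers to the conditional bound needed along the inner stochastic-approximation trajectory. Once this is settled, both estimates are a direct transcription of Theorem~\ref{thm: conv-rate-of-meansquare-unconsp}.
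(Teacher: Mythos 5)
Your proposal is correct and follows exactly the route the paper intends: the paper omits the proof of this lemma, stating only that it is a ``similar analysis to Theorem~\ref{thm: conv-rate-of-meansquare-unconsp}'' with the constants $a_y,b_y,a_x,b_x$ taken from Remark~\ref{Choices of a and b}, which is precisely the conditional application of that theorem to the two strongly convex subproblems that you carry out. Your identification of the conditioning $\sigma$-fields ($\scrF_k$ for the $y$-update, $\scrF_{k+1}^y$ for the $x$-update) and of the constants $(c_y,L_y,M_y,C_y^{(k+1)},K_y)$ and $(c_x,L_x,M_x,C_x^{(k+1)},K_x)$ matches the paper's definitions in \eqref{Def: M_y and K_y}--\eqref{Def: M_x and K_x}.
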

{\begin{proof} {Omitted} (\yxe{\vs{Similar} analysis to Th.~\ref{thm: conv-rate-of-meansquare-unconsp}}) \vs{while} values of $a_y, b_y, a_x, b_x$ are discussed in Rem.~\ref{Choices of a and b}.
\end{proof}
}

\yxf{Note that if assumptions of Theorem~\ref{linear convergence of g-ADMM} hold and \eqref{SOpt} has \vs{a} unique KKT point $u^*$, then based on Theorem~\ref{linear convergence of g-ADMM}, there exists $\delta > 0$, such that for any $k \geq 0$,
\begin{align} \label{contraction of iterates-SIADMM}
\|u_{k+1}^* - u^*\|_G^2 \leq \frac{1}{1+\delta} \|u_k - u^*\|_G^2, \quad 
\end{align}
where 
$G$ is defined in \eqref{Def: G}.
}
\yxe{\vvs{We now} {present} \vs{a} recursive inequality essential in analyzing \vs{a.s.} convergence and the convergence rate of the major iterates $x_k,y_k,\lambda_k$ in Algorithm~\ref{s-admm}}. We continue to use the {constants defined in \eqref{Def: M_y and K_y}, \eqref{Def: M_x and K_x}.
\begin{lemma} \label{lm: main recursive inequality}
Suppose Assumptions~\ref{Ass: existence of KKT point-SI-ADMM},
\ref{Ass: P&Q-Si-ADMM}(i), \ref{Ass: Strong convexity}, \ref{Ass: Full Row
Rank}, and \ref{Ass: ConditionalUnbiased} hold. In addition, suppose $\gamma$ and $P$
satisfy \eqref{gammaP}, $\eta \in (0,1)$, and $T > 0$. \sv{Then the following hold.} \\
(i) \yxf{\eqref{SOpt} has an unique KKT point $u^*$  and there exists  $\delta > 0$ such that 
\eqref{contraction of iterates-SIADMM} holds for any $k \geq 0$.} \\
(ii) For any $k \geq 0$, if $T_{k+1}^y \geq K_y, T_{k+1}^x \geq K_x$, then for any $R_{0,k} > 0$, 
\begin{align} \label{supermartigale inequality 0}
 \bE[\| u_{k+1} - u^* \|^2_G \mid \scrF_k ]  & \leq \left[ (1+R_{0,k}) \left( \frac{\zeta_2^x}{T_{k+1}^x} + \frac{\zeta_2^y}{T_{k+1}^y} + \frac{\zeta_2^{xy} }{T_{k+1}^x T_{k+1}^y} \right) + \frac{1 + 1/R_{0,k}}{1 + \delta} \right] \| u_k - u^* \|_G^2 \ \notag \\
& + (1+R_{0,k}) \left( \frac{\zeta_1^x}{T_{k+1}^x} + \frac{\zeta_1^y}{T_{k+1}^y} + \frac{\zeta_1^{xy}}{T_{k+1}^x T_{k+1}^y } \right),
\end{align}
\yxf{where $\zeta_1^x, \zeta_1^y, \zeta_1^{xy},  \zeta_2^x, \zeta_2^y, \zeta_2^{xy}$ are \sv{suitably defined nonnegative constants.}}  

\noindent (iii) Suppose $R_{0,k} \equiv R_0$, $a \triangleq \frac{1 + 1/R_{0}}{1 + \delta}$, $C_1 \triangleq
(1 + R_{0})(\zeta_2^x + \zeta_2^y )/T$, $C_2 \triangleq (1
+ R_{0}) \zeta_2^{xy} / T^2$, $C_3 \triangleq (1+R_{0})( \zeta_1^x + \zeta_1^y )/T$, $C_4 \triangleq (1 + R_{0}) \zeta_1^{xy} /T^2,T_{k+1}^y {\triangleq} \max \left\{K_y, \lceil T/\eta^k \rceil \right\}$, $ T_{k+1}^x {\triangleq} \max \left\{ K_x, \lceil T/\eta^k \rceil \right\}$. Then \vs{the following holds a.s. for any index $k$.}
\begin{align} \label{supermartigale inequality 0.5}
  \bE[\| u_{k+1} - u^* \|^2_G \mid \scrF_k ] 
  \leq \left[ \left( C_1 + C_2 \eta^k \right) \eta^k + a \right] 
	 \| u_k - u^* \|_G^2   + \left( C_3+ C_4 \eta^k \right) \eta^k.
\end{align}
\end{lemma}
\begin{proof}
\noindent (i) See Remark~\ref{Rmk: Uniqueness of KKT point}. Note that Theorem~\ref{linear convergence of g-ADMM} can be applied. 
\noindent (ii) See Appendix.
\noindent (iii) \vs{By substituting $T_{k+1}^y$ and $T_{k+1}^x$ by $\tfrac{T}{\eta^k}$} \vs{and} $R_{0,k}$ by $R_0$ in \eqref{supermartigale inequality 0}, the result follows.
\end{proof}
\yxf{We now refine this result for the setting where $g$ is simple {and the $y$ update can be obtained exactly in Algorithm~\ref{s-admm}}, i.e., update \eqref{Ty-SIADMM} is modified to 
\begin{align} \label{Ty-SIADMM-modified}
y_{k+1} := \mbox{arg}\hspace{-0.02in}\min_{y} \ \Lscr_{\rho}(x_k,y,\lambda_k) + \sv{1\over 2}\| y - y_k \|_Q^2,
\end{align}
where $\Lscr_{\rho}(x,y,\lambda)$  \sv{denotes the} augmented Lagrangian \sv{function} defined in
\eqref{Def: ALfunc}.} \vs{Consequently, the \yux{moment} assumption
(Assumption~\ref{Ass: ConditionalUnbiased}), while imposed directly, pertains
only to the sample $\nabla_x {\tilde f} (x,\xi)$}. \yxf{Since $y_{k+1}$ can be
exactly obtained, we let $\scrF_{k+1}^y = \scrF_k$.
Note that the following corollary \vs{finds relevance} 
\vs{in the numerics}.} 
\begin{corollary} \label{Corr: theobound}
\sv{Consider Algorithm~\ref{s-admm} where update \eqref{Ty-SIADMM} is modified to \eqref{Ty-SIADMM-modified}} \yux{($g$ is simple such that \eqref{Ty-SIADMM-modified} can be computed exactly). Suppose Assumptions~\ref{Ass: existence of KKT point-SI-ADMM}, \ref{Ass: objective function-SI-ADMM} (a),
\ref{Ass: P&Q-Si-ADMM}, and \ref{Ass: ConditionalUnbiased} hold and $\gamma$, $P$ satisfy \eqref{gammaP}. Moreover, suppose that \eqref{SOpt} has an unique KKT point $u^*$.} \sv{Then the following hold.}\\
(i) There exists  $\delta > 0$ such that 
\eqref{contraction of iterates-SIADMM} holds for any $k \geq 0$. \\
(ii)
If $T_{k+1}^x \geq K_x$ \vs{in} Algorithm~\ref{s-admm} \vs{and  $R_{0,k} > 0$},  then the following holds \vs{a.s.} for any $k \geq 0$.
\yxf{\begin{align} \label{theobound-supermartingale}
 \bE[ \| u_{k+1} - u^* \|_G^2 \mid \Fscr_k] \leq \left( \frac{ (1+R_{0,k}) \zeta_2}{T_{k+1}^x} + \frac{1 + 1/R_{0,k}}{ 1 + \delta} \right) \|u_k - u^*\|_G^2 + \frac{ (1+R_{0,k}) \zeta_1}{T_{k+1}^x}, 
\end{align}
where $\zeta_1, \zeta_2$ are \sv{suitably defined nonnegative constants.}} 
\end{corollary}
\begin{proof}
See Appendix.
\end{proof} 

\section{Convergence and rate analysis}\label{sec: convergence and rate-SIADMM}
In this section, we examine the convergence of the (random) sequence
$\{u_k\}$ generated by ({\bf SI-ADMM}), both from an asymptotic  and a
rate  standpoint, based on the choice of the sequences
$\{T_{k+1}^y\}$ and $\{ T_{k+1}^x \}$. 
The sequence $\{u_k\}$ created by Algorithm~\ref{s-admm} has the following relationship:
\begin{align} u_{k+1} := \Gamma_{k+1} (u_k), \label{def-uk-SIADMM}\end{align}
where $\Gamma_k(\bullet)$ is defined in \eqref{def-Gamma-k-SIADMM}. Based on the supermartingale convergence lemma (Lemma~\ref{lm: supermartingale}), we obtain a.s. 
convergence if $\{ T_{k+1}^y \}$ and $\{ T_{k+1}^x \}$ are {suitably}
chosen.
\begin{theorem}[{\bf a.s. convergence}]\label{asconv-SIADMM}
\yxf{Consider Algorithm~\ref{s-admm}.} \yux{Suppose Assumptions~\ref{Ass: existence of KKT point-SI-ADMM}, \ref{Ass: P&Q-Si-ADMM}(i), \ref{Ass: Strong convexity}, \ref{Ass: Full Row Rank}, and \ref{Ass: ConditionalUnbiased} hold and $\gamma$ and $P$ satisfy \eqref{gammaP}.} Suppose $T_{k+1}^y$ and
			$T_{k+1}^x$ satisfy 
			$T_{k+1}^y \geq K_y$, $T_{k+1}^x \geq K_x$, $\sum_{k=0}^\infty
			\frac{1}{T_{k+1}^y} < \infty, \sum_{k=0}^\infty
			\frac{1}{T_{k+1}^x} < \infty,$ and $\bE[ \| u_0 - u^*
			\|_G^2] < \infty $. Then $ \| u_k - u^* \|^2_G \rightarrow 0 $ a.s. as $k \rightarrow	\infty$. 
\end{theorem}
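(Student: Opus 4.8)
The plan is to cast the recursion of Lemma~\ref{lm: main recursive inequality}(i) into the exact form demanded by the supermartingale convergence lemma (Lemma~\ref{lm: supermartingale}) and then simply read off the conclusion. Set $\nu_k \triangleq \|u_k - u^*\|_G^2$, a nonnegative, $\scrF_k$-measurable random variable (since $u_k$ is generated using only samples recorded in $\scrF_k$). Taking unconditional expectations in \eqref{supermartigale inequality 0} and using that the bracketed multiplier of $\|u_k-u^*\|_G^2$ and the additive term are finite (all $\zeta_i^\bullet$ are finite and $T_{k+1}^x,T_{k+1}^y\ge 1$), a one-line induction from $\bE[\nu_0]<\infty$ shows $\bE[\nu_k]<\infty$ for every $k$. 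The key identification is $\bE[\nu_{k+1}\mid \scrF_k]\le(1-\tau_k)\nu_k+\mu_k$, where $1-\tau_k$ is the bracketed coefficient in \eqref{supermartigale inequality 0} and $\mu_k$ is its additive term; since $\nu_0,\dots,\nu_k$ are $\scrF_k$-measurable, the tower property turns conditioning on $\scrF_k$ into conditioning on $\nu_0,\dots,\nu_k$, matching the hypothesis of Lemma~\ref{lm: supermartingale}.

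The crux is the choice of the free parameter $R_{0,k}$. I would take it constant, $R_{0,k}\equiv R_0$ with $R_0>1/\delta$, so that the deterministic part of the contraction factor satisfies $\tfrac{1+1/R_0}{1+\delta}=:1-\alpha$ with $\alpha>0$; the strict inequality $\tfrac{1}{1+\delta}<1$ supplied by Theorem~\ref{linear convergence of g-ADMM} is precisely the slack that $R_0>1/\delta$ converts into a genuine contraction. With this choice, writing $\beta_k\triangleq(1+R_0)\big(\tfrac{\zeta_2^x}{T_{k+1}^x}+\tfrac{\zeta_2^y}{T_{k+1}^y}+\tfrac{\zeta_2^{xy}}{T_{k+1}^xT_{k+1}^y}\big)$ and $\mu_k\triangleq(1+R_0)\big(\tfrac{\zeta_1^x}{T_{k+1}^x}+\tfrac{\zeta_1^y}{T_{k+1}^y}+\tfrac{\zeta_1^{xy}}{T_{k+1}^xT_{k+1}^y}\big)$, one has $\tau_k=\alpha-\beta_k$.

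Next I would verify the four hypotheses of Lemma~\ref{lm: supermartingale}. Summability $\sum_k\mu_k<\infty$ follows from $\sum_k 1/T_{k+1}^x<\infty$, $\sum_k 1/T_{k+1}^y<\infty$, the bound $1/(T_{k+1}^xT_{k+1}^y)\le 1/T_{k+1}^x$ (using $T_{k+1}^y\ge K_y\ge 1$), and the constancy of $1+R_0$. As $\beta_k$ and $\mu_k$ are both dominated by a constant multiple of $1/T_{k+1}^x+1/T_{k+1}^y$, summability forces $\beta_k\to0$ and $\mu_k\to0$; hence $\tau_k\to\alpha>0$, giving $\sum_k\tau_k=\infty$ and $\mu_k/\tau_k\to0$, while $\tau_k<1$ holds because $\alpha<1$ and $\beta_k\ge0$.

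The one technical point is the requirement $\tau_k\in(0,1)$ for \textbf{all} $k$, whereas $\tau_k=\alpha-\beta_k$ is only guaranteed positive once $\beta_k<\alpha$. Since $\beta_k\to0$, there is a finite index $k_0$ with $\beta_k<\alpha$, hence $\tau_k\in(0,1)$, for every $k\ge k_0$. I would then apply Lemma~\ref{lm: supermartingale} to the shifted process $\{\nu_{k_0+j}\}_{j\ge0}$, whose initial term satisfies $\bE[\nu_{k_0}]<\infty$ by the induction above and whose recursion is exactly the tail of the one established here (the tail sums still satisfy $\sum_j\tau_{k_0+j}=\infty$, $\sum_j\mu_{k_0+j}<\infty$, $\mu_{k_0+j}/\tau_{k_0+j}\to0$). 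The lemma then yields $\nu_{k_0+j}\to0$ a.s., i.e. $\|u_k-u^*\|_G^2\to0$ a.s. I expect this bookkeeping at the initial indices, together with the correct quantification of the trade-off in $R_0$ (large enough to beat $\delta$, yet finite so the $(1+R_0)/T_{k+1}$ terms remain summable), to be the only delicate part; everything else is a direct substitution into Lemma~\ref{lm: supermartingale}.
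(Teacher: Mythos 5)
Your proposal is correct and follows essentially the same route as the paper: it invokes Lemma~\ref{lm: main recursive inequality}(i) with a constant $R_{0,k}\equiv R_0$ chosen so that $\tfrac{1+1/R_0}{1+\delta}<1$, verifies the hypotheses of Lemma~\ref{lm: supermartingale}, and handles the finitely many initial indices where the contraction coefficient may fail by shifting the recursion to a suitable $K_0$ (with $\bE[\nu_{K_0}]<\infty$ obtained by unconditional induction from $\bE[\nu_0]<\infty$), exactly as in the paper's argument.
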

\begin{proof} Under the \vs{assumptions of this  Theorem}, Lemma~\ref{lm: main recursive inequality} \vs{may} be applied. Specifically, {we utilize} \eqref{supermartigale inequality 0} and
Lemma~\ref{lm: supermartingale} to prove this result. Let $R_{0,k} \equiv R_0$ \sv{ for all $k$}  in \eqref{supermartigale inequality 0}, \sv{which may be rewritten as follows.}
\begin{align}\label{supermartingale-thm: a.s.convergence}
\bE[ v_{k+1} \mid \Fscr_k] \leq (1 - U_k) v_k + V_k, \quad a.s. \quad \forall k \geq 0, 
\end{align}
\yxf{where $v_k \triangleq \| u_k - u^* \|_G^2$,
$V_k \triangleq (1+R_0) \left( \frac{\zeta_1^x}{T_{k+1}^x} + \frac{\zeta_1^y}{T_{k+1}^y} + \frac{\zeta_1^{xy}}{T_{k+1}^x T_{k+1}^y} \right)$, \vs{and}  $$ U_k \triangleq 1 - \left( \frac{1 + 1/R_0}{1 + \delta} + (1+R_0) \left(  \frac{\zeta_2^x}{T_{k+1}^x} + \frac{\zeta_2^y}{T_{k+1}^y} + \frac{\zeta_2^{xy}}{T_{k+1}^x T_{k+1}^y} \right) \right).$$}
By assumption, $T_{k+1}^y \rightarrow +\infty, T_{k+1}^x \rightarrow +\infty$, implying that $1 - U_k \xrightarrow[]{k \rightarrow \infty} \frac{1 + 1/R_0}{1 + \delta}$ for any fixed $R_0$. Since \eqref{supermartingale-thm: a.s.convergence} holds for any $R_0$, fix $R_0 > 0$ such that $a \triangleq \frac{1 + 1/R_0}{1 + \delta} < 1$. Then $\exists K_0, s.t.\ \forall k \geq K_0$, $1 - U_k < q < 1 \Rightarrow 1-q < U_k < 1, \forall k \geq K_0$. Therefore, the shifted recursion can be stated as follows.
\begin{align}
\notag
& \bE[ v_{k+1} \mid \yxb{v_{K_0},\hdots,v_k}] \leq (1 - U_k) v_k + V_k, \ a.s. , \forall k \geq K_0,
\end{align}
\vs{where  $1-q < U_k < 1, V_k \geq 0$ for all $k \geq K_0.$ If it can be
shown} that $\mathbb{E}[v_{K_0}] < \infty$, $\sum_{k = K_0}^\infty U_k  =
\infty$, $\sum_{k = K_0}^\infty V_k < \infty $ and $\lim_{k \rightarrow \infty}
\frac{V_k}{U_k} =  0$, then by Lemma~\ref{lm: supermartingale}, the \vs{result
follows.} In fact, $\mathbb{E}[v_0] < \infty$ and $\bE[ v_{k+1} ] \leq (1 -
U_k) \bE[ v_k ] + V_k, \forall k \geq 0$ (obtained by taking expectations on
both sides of \eqref{supermartingale-thm: a.s.convergence}) imply
$\mathbb{E}[v_{K_0}] < \infty$. $1-q < U_k < 1, \forall k \geq K_0$ implies
$\sum_{k = K_0}^\infty U_k  = \infty$. Eventually, $\sum_{k=0}^\infty
\frac{1}{T_{k+1}^y} < \infty, \sum_{k=0}^\infty \frac{1}{T_{k+1}^x} < \infty,
U_k > 1-q, \forall k \geq K_0$ imply $\sum_{k = K_0}^\infty V_k < \infty $ and
$\lim_{k \rightarrow \infty} \frac{V_k}{U_k} =  0$.  
 \end{proof}

Next, we consider the case when $T_{k+1}^y$ and $T_{k+1}^x$
increase geometrically and prove \yxf{linear convergence of the major iterates}. \vs{We also show that} the \yxe{overall iteration (sample) complexity \vs{to compute an $\epsilon$-solution is $\mathcal{O}(1/\epsilon)$, \vs{identical to the  canonical iteration and sample complexity}}.} 
{\begin{theorem}[{\bf Rate and overall iteration (sample) complexity under geometric growth of
	$T_{k+1}^x$ and $T_{k+1}^y$}] \label{thm: geometric decay -SIADMM}
Consider Algorithm~\ref{s-admm}. \yux{Suppose Assumptions~\ref{Ass: existence of KKT point-SI-ADMM}, \ref{Ass: P&Q-Si-ADMM}(i), \ref{Ass: Strong convexity}, \ref{Ass: Full Row Rank}, and \ref{Ass: ConditionalUnbiased} hold. In addition, $\gamma$ and $P$ satisfy \eqref{gammaP}.} Suppose $T_{k+1}^y \vvs{\triangleq} \max \left\{K_y, \lceil T/\eta^k \rceil \right\}$, $T_{k+1}^x \vvs{\triangleq} \max \left\{ K_x, \lceil T/\eta^k \rceil \right\}$, $\forall k \geq 0$, where $0< \eta < 1$, $T > 0$. Then  the following statements hold:\\
(i) $\mathbb{E}[\|u_k-u^*\|_G^2] \to 0$ as $k \to \infty$ geometrically.\\
(ii) Suppose \eqref{SOpt} is resolved to  precision $\epsilon \leq 1/e$ by Algorithm~\ref{s-admm}, \ie  $\bE[\|u_k - u^*\|^2_G] \leq \epsilon$. Assume that $\eta > \frac{1}{1+\delta}$. Then the overall iteration (sample) complexity \yxf{to \vs{obtain an $\epsilon$-solution} } is $O(1/\epsilon)$.
\end{theorem}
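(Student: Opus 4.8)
The plan is to reduce both statements to the single-epoch recursive inequality \eqref{supermartigale inequality 0.5} of Lemma~\ref{lm: main recursive inequality}(ii) and then pass to unconditional expectations. Writing $e_k \triangleq \bE[\|u_k - u^*\|_G^2]$ and applying the tower property to \eqref{supermartigale inequality 0.5}, I obtain the deterministic scalar recursion
\begin{align*}
e_{k+1} \leq \left[(C_1 + C_2\eta^k)\eta^k + a\right]e_k + (C_3 + C_4\eta^k)\eta^k, \quad \forall k \geq 0,
\end{align*}
where $a \triangleq \frac{1+1/R_0}{1+\delta}$. The first observation is that, since $a \to \frac{1}{1+\delta} < 1$ as $R_0 \to \infty$, I may fix $R_0$ large enough that $a < 1$; moreover, under the hypothesis $\eta > \frac{1}{1+\delta}$ of part (ii), I can fix $R_0$ so that $a < \eta$, a sharper choice I will exploit in the complexity estimate but which is not needed for part (i).

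For part (i), I would establish geometric decay by unrolling. Denoting $b_k \triangleq (C_1+C_2\eta^k)\eta^k + a$ and $d_k \triangleq (C_3+C_4\eta^k)\eta^k$, unrolling gives $e_k \leq \left(\prod_{i=0}^{k-1}b_i\right)e_0 + \sum_{j=0}^{k-1}\left(\prod_{i=j+1}^{k-1}b_i\right)d_j$. The key estimate is that, writing $b_i \leq a\bigl(1+(C_1+C_2)\eta^i/a\bigr)$, the product satisfies $\prod_{i=0}^{k-1}b_i \leq a^k \prod_{i=0}^{\infty}\bigl(1 + (C_1+C_2)\eta^i/a\bigr) \leq C' a^k$, the infinite product converging because $\sum_i \eta^i < \infty$; the same bounded-product device gives $\prod_{i=j+1}^{k-1}b_i \leq C'' a^{k-1-j}$. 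Substituting $d_j \leq (C_3+C_4)\eta^j$ and summing the resulting geometric series in $\eta/a$ yields $e_k = O(q^k)$ with $q \triangleq \max\{a,\eta\}$ (with an extra linear factor in the borderline case $a=\eta$), which proves geometric convergence.

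For part (ii), I would exploit the rate available once $R_0$ is fixed so that $a < \eta$: then $q=\eta$ and the analysis above yields $e_k \leq D\eta^k$ for an explicit constant $D$. To drive $e_N \leq \epsilon$ it therefore suffices to take $N$ minimal with $D\eta^N \leq \epsilon$, giving $N = \lceil \log(D/\epsilon)/\log(1/\eta)\rceil = O(\log(1/\epsilon))$ outer iterations. The overall sample cost is $\sum_{k=0}^{N-1}(T_{k+1}^x + T_{k+1}^y)$; since each term is $\Theta(T/\eta^k)$ for large $k$, this sum is $O(\eta^{-N})$. By minimality of $N$ one has $\eta^{-(N-1)} < D/\epsilon$, hence $\eta^{-N} \leq D/(\eta\epsilon) = O(1/\epsilon)$, so the total number of gradient steps (samples) is $O(1/\epsilon)$, matching the canonical complexity.

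The main obstacle is the clean handling of part (i): the multiplicative coefficient $b_k$ is time-varying rather than a fixed contraction, so the geometric rate must be extracted from the convergent infinite product $\prod(1+O(\eta^i))$ rather than from a single constant, and one must verify that the additive perturbations $d_j = O(\eta^j)$ do not dominate. The role of the hypothesis $\eta > \frac{1}{1+\delta}$ in part (ii) is subtle but decisive: it is precisely the condition that lets me choose $R_0$ so that the contraction factor $a$ falls below the sampling-growth rate $\eta$, balancing the geometric growth of per-iteration sample budgets ($\propto \eta^{-k}$) against the geometric error decay ($\propto \eta^k$) and thereby yielding $O(1/\epsilon)$ rather than a strictly worse power of $1/\epsilon$.
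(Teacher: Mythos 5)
Your proposal is correct and follows essentially the same route as the paper's proof: both reduce the claim to the scalar recursion obtained by taking expectations in \eqref{supermartigale inequality 0.5} of Lemma~\ref{lm: main recursive inequality}(ii), unroll it, bound the product of the time-varying coefficients $(C_1+C_2\eta^i)\eta^i+a$ by a constant multiple of $a^k$, conclude decay at rate $\max\{a,\eta\}$, and in part (ii) use the hypothesis $\eta>\tfrac{1}{1+\delta}$ to fix $R_0$ with $a<\eta$ so that $e_k\le D\eta^k$ and the geometric sum of sample budgets telescopes to $O(1/\epsilon)$. The only immaterial difference is the elementary device for the product bound: you use $\prod_i\bigl(1+O(\eta^i)\bigr)\le e^{\sum_i O(\eta^i)}$, while the paper uses the AM--GM inequality followed by $(1+x/k)^k\le e^x$; both give the same constant up to inessential factors, and your treatment of the $\max\{K_x,\lceil T/\eta^k\rceil\}$ floor (contributing only an $O(\log(1/\epsilon))$ additive term) matches the paper's.
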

}
\begin{proof}
(i) \yxf{Under these assumptions, we could apply inequality \eqref{supermartigale inequality 0.5} from Lemma~\ref{lm: main recursive inequality}(ii), where $R_0$ is chosen such that $a \triangleq \frac{1+1/R_0}{1+\delta} < 1$.} 
Denote $r_k = \bE[\| u_k - u^* \|_G^2 ]$. Then by taking expectations on both sides of \eqref{supermartigale inequality 0.5} and by \vs{utilizing the recursion}, we have \sv{for $k \geq 2$} 
\begin{align}  
&   r_{k+1}   \leq \left( \left( C_1 + C_2 \eta^k \right) \eta^k + a \right) r_k + \left( C_3+ C_4 \eta^k \right) \eta^k
 \label{expectationbound}
 \implies r_k   \leq\overbrace{ \left( \prod_{i =0}^{k-1} (C_1 \eta^i + C_2 \eta^{2i} + a ) \right)}^{\vs{\triangleq \beta_0}} r_0 \\
\notag
& + \overbrace{\sum_{i = 0}^{k-2} \big[ (C_3 \eta^i + C_4 \eta^{2i})(C_1 \eta^{i+1} + C_2 \eta^{2(i+1)} + a) {\times} \sv{\cdots} {\times} (C_1 \eta^{k-1} + C_2 \eta^{2(k-1)} + a) \big] + C_3 \eta^{k-1} + C_4 \eta^{2(k-1)}}^{\vs{\triangleq \beta_1}},
\end{align}
 The coefficient \vs{of} $r_0$ \vs{may be bounded as follows.} 
\begin{align*}
 &\vs{\beta_0} \leq \left( \frac{\sum_{i = 0}^{k-1} (C_1 \eta^i + C_2 \eta^{2i} + a ) }{k} \right)^k 
 \leq \left( a + \frac{C_1}{k} \cdot \frac{1-\eta^k}{1-\eta} + \frac{C_2}{k} \cdot \frac{1 - \eta^{2k}}{1 - \eta^2} \right)^k\\
   & \leq \left( a + \frac{C_1}{k} \cdot \frac{1}{1-\eta} + \frac{C_2}{k} \cdot \frac{1}{1 - \eta^2} \right)^k 
 = a^k  \left( 1 + \frac{C_1(1+\eta) + C_2}{a(1-\eta^2)} \cdot \frac{1}{k} \right)^k 
 \leq a^k  e^{\alpha}, 
\end{align*}
where {$\alpha \triangleq  \frac{C_1(1+\eta) +
	C_2}{a(1-\eta^2)}$, the first inequality follows from \sv{noting} that \sv{the}
	geometric mean is less than  \sv{the} arithmetic mean, while the last
	inequality \sv{is a consequence of noting that the following sequence of inequalities holds} for $x \geq 0$:} 
$1 + x \leq (1 + x/2)^2 \leq \cdots \leq (1 + x/k)^k \leq \hdots \leq e^x.$
Let $C_{1,2} = C_1 + C_2$, $C_{3,4} = C_3 + C_4$. Then \yxf{by similar techniques}, \usss{\vs{$\beta_1$} in \eqref{expectationbound} can be bounded as follows:}
\begin{align*}
& \vs{\beta_1}  \leq \sum_{i = 0}^{k-2} \left( (C_{3,4} \eta^i)(C_{1,2} \eta^{i+1}  + a) \hdots (C_{1,2} \eta^{k-1} + a) \right) + C_{3,4} \eta^{k-1} \\
& \leq C_{3,4} \left( \sum_{i=0}^{k-2} \eta^i \left( \frac{\sum_{j = i+1}^{k-1} (a + C_{1,2} \eta^j)}{k-i-1} \right)^{k-i-1} + \eta^{k-1} \right)\\ 
 & =  C_{3,4} \left( \sum_{i=0}^{k-2} \eta^i \left( a + \frac{C_{1,2} \eta^{i+1}}{k-i-1} \cdot \frac{1 - \eta^{k-i-1}}{1-\eta} \right)^{k-i-1} + \eta^{k-1} \right) \\
& \leq C_{3,4} \left( \sum_{i=0}^{k-2} \eta^i a^{k-i-1} \left( 1 + \frac{C_{1,2} \eta}{a(1-\eta)} \cdot \frac{1}{k-i-1} \right)^{k-i-1} + \eta^{k-1} \right) 
 \leq C_{3,4} \left( \sum_{i=0}^{k-2} \eta^i a^{k-i-1} e^{\beta} + \eta^{k-1} \right),
\end{align*}
where $\beta = \frac{C_{1,2} \eta}{a(1-\eta)}$. Let $\tau \triangleq \max\{a, \eta\}$ and $ q \in (\tau,1)$. Then \eqref{expectationbound} \vs{reduces to the following for $k \geq 2$.}
\begin{align}\label{geometric decay}
r_{k} & \leq a^k e^\alpha r_0 + C_{3,4} \left[ \sum_{i=0}^{k-2} \eta^i a^{k-i-1} e^{\beta} + \eta^{k-1} \right] 
 \leq (ae^\alpha r_0  + C_{3,4} e^\beta (k-1) + C_{3,4}) \tau^{k-1}.
\end{align}
We know that $k\tau^k \leq D{(q)}q^k$~\cite{hesamdiss}, implying that $r_k \rightarrow 0$  \vs{at a \sv{geometric} rate}.

\noindent (ii)
\vs{Let $R_0$ be} such that $a \vs{=\tfrac{1+\tfrac{1}{R_0}}{1+\delta}} < \eta$, and let $\eta = La$, where $L > 1$. \vs{From~\eqref{geometric decay}, for $k \geq 2$}:
\begin{align} \label{errorbound-geometric decay}
\notag
r_{k} & \leq a^k e^\alpha r_0 + C_{3,4} \left( \sum_{i=0}^{k-2} \eta^i a^{k-i-1} e^{\beta} + \eta^{k-1} \right) 
 = \left( \frac{\eta}{L} \right)^k e^\alpha r_0 + C_{3,4} \left( \sum_{i=0}^{k-2} \eta^i \left( \frac{\eta}{L} \right)^{k-i-1} e^{\beta} + \eta^{k-1} \right) \\
\notag
& \leq \left( \frac{\eta}{L} \right)^k e^\alpha r_0 + C_{3,4} e^\beta \eta^{k-1} \left( \sum_{i=0}^{k-2} \left( \frac{1}{L} \right)^{k-i-1} + 1 \right) 
 = \left( \frac{\eta}{L} \right)^k e^\alpha r_0 + C_{3,4} e^\beta \eta^{k-1} \left(\frac{1-\tfrac{1}{L^k}}{1-\tfrac{1}{L}}\right) \\
& \leq \eta^k \left( \frac{e^\alpha r_0}{L} + \frac{C_{3,4} e^\beta}{\eta(1 - 1/L)}  \right)  \triangleq R_k.
\end{align}
\yxf{Suppose that $\bar K \triangleq \min \{ K: r_k \leq \epsilon, \forall k \geq K \}$. Then $\bar K \leq \min \{ K : R_k \leq \epsilon, \forall k \geq K \} = \min \{ K: R_K \leq \epsilon, R_{K-1} > \epsilon \} = \left\lceil \log_{\eta} \left( \epsilon \Big/  \left( \frac{e^\alpha r_0}{L} + \frac{C_{3,4} e^\beta}{\eta(1 - 1/L)} \right) \right)  \right\rceil. $}\\ 
{Since the overall iteration (sample) complexity $N{(\epsilon)}$ is a summation of 
\vs{the iteration (sample) complexity at} each {major iteration}, we \vs{obtain} the following sequence
	of inequalities}:
\begin{align*}
 N{(\epsilon)} & = \sum_{k=1}^{\bar K} \left( (T_k^y - 1) + (T_k^x - 1) \right) 
 \leq \sum_{k=1}^{\bar K} ( \max\{ \lceil \tfrac{T}{\eta^{k-1}} \rceil ,K_x \}  + \max\{ \lceil \tfrac{T}{\eta^{k-1}} \rceil ,K_y \} - 2 ) \\ 
& \leq \sum_{k=1}^{\bar K} ( \tfrac{T}{\eta^{k-1}} + 1 + K_x  +  \tfrac{T}{\eta^{k-1}} +1 + K_y - 2 ) 
 = 2T\sum_{k=1}^{\bar K} (\tfrac{1}{\eta^{k-1}}) + {\bar K}(K_x + K_y)  \\
& = 2T \cdot \tfrac{\tfrac{1}{\eta^{\bar K}} - 1}{ \tfrac{1}{\eta} - 1} + {\bar K}(K_x + K_y) 
 \leq 2T \cdot \tfrac{\tfrac{1}{\eta^{\bar K}} }{ \tfrac{1}{\eta} - 1} + {\bar K}(K_x + K_y) 
 = \tfrac{2T \eta}{1-\eta} \cdot \tfrac{1}{\eta^{\bar K}} + {\bar K}(K_x + K_y) \\
& \leq \tfrac{2T \eta}{1-\eta} \cdot \tfrac{1}{\eta} \cdot \tfrac{1}{\epsilon} \left( \tfrac{e^\alpha r_0}{L} + \tfrac{C_{3,4} e^\beta}{\eta(1 -\tfrac{1}{L})}  \right) + \left( \log_{\eta}\left( \epsilon \big/ \left( \tfrac{e^\alpha r_0}{L} + \tfrac{C_{3,4} e^\beta}{\eta(1 - \tfrac{1}{L})}  \right) \right) + 1 \right)(K_x + K_y) \\
& = \tfrac{2T}{1-\eta} \left( \tfrac{e^\alpha r_0}{L} + \tfrac{C_{3,4} e^\beta}{\eta(1 -\tfrac{1}{L})}  \right) \tfrac{1}{\epsilon}  + \tfrac{K_x + K_y}{\log(1/\eta)} \log \left( \tfrac{1}{\epsilon} \right) 
  + 
 \left( 1 - \log_{\eta} \left( \tfrac{e^\alpha r_0}{L} + \tfrac{C_{3,4}
		 e^\beta}{\eta(1 - \tfrac{1}{L})}  \right) \right)(K_x + K_y) \\
& \leq  \tfrac{2T}{1-\eta} \left( \tfrac{e^\alpha r_0}{L} + \tfrac{C_{3,4} e^\beta}{\eta(1 - 1/L)}  \right) \tfrac{1}{\epsilon}  + \tfrac{K_x + K_y}{\log(1/\eta)}  \log \left( \tfrac{1}{\epsilon} \right) \\
& + \left( 1 - \log_{\eta} \left( \tfrac{e^\alpha r_0}{L} + \tfrac{C_{3,4} e^\beta}{\eta(1 - 1/L)}  \right) \right)(K_x +
			 K_y) \log \left( \tfrac{1}{\epsilon} \right) 
 = \left( \tfrac{2T}{1-\eta} \left( \tfrac{e^\alpha r_0}{L} + \tfrac{C_{3,4} e^\beta}{\eta(1 - \tfrac{1}{L})}  \right)  \right)  \tfrac{1}{\epsilon} \\
&  + \left( \left( 1 - \log_{\eta} \left( \tfrac{e^\alpha r_0}{L} + \tfrac{C_{3,4}
		 e^\beta}{\eta(1 - 1/L)}  \right) \right)(K_x +
			 K_y) + \tfrac{K_x + K_y}{\log(1/\eta)} \right) \log \left( \tfrac{1}{\epsilon} \right) 
 = \mathcal{O}\left(\tfrac{1}{\epsilon}\right).
\end{align*}
\end{proof}
\begin{remark}
Note that this complexity matches that of standard SA schemes, where to solve a smooth and strongly convex stochastic optimization problem to $\epsilon$-accuracy (i.e. $\bE[\| x_k - x^* \|^2] \leq \epsilon$) requires $\mathcal{O}(1/\epsilon)$ iterations (or equivalently sampled gradients). \qed
\end{remark}

\section{Numerical experiments}\label{sec: Numerics-SIADMM}
In this section, we compare SI-ADMM with \sv{two} other algorithms on {two test problems}: (i) LASSO with expectation-valued loss; and (ii) Distributed regression. 

\noindent \vs{A. \bf LASSO.} 
{Consider the following problem:} 
\begin{align} \label{LASSO - SIADMM} \tag{LASSO}
 \min \quad \bE[ ( l{(\xi)}^T x - s{(\xi)} )^2 ] + \bar \gamma \| x \|_1, 
\end{align}
where $x \in \Real^n, (l,s): \Xi \rightarrow \bR^n \times \bR$, and $(\Xi, \scrF, \bP)$ denotes the probability space. {We consider this problem first in order to compare SI-ADMM with existing stochastic variants of ADMM, which could only tackle problems with simple and closed-form $g$ function (regularizers).}

{In \vvs{subsequent discussions}, we denote $l(\xi),s(\xi)$ as $l,s$ for simplicity.} Then \eqref{LASSO - SIADMM} can be reformulated as an \eqref{SOpt} with one part of objective being stochastic and the other deterministic:
\begin{align} \label{strucLASSO - SIADMM}
\min & \ \bE[ ( l^T x - s )^2 ] + \bar \gamma \| y \|_1 \quad \st \ x - y = {\bf 0}.
\end{align}
{We compare the behavior of SI-ADMM} with stochastic
ADMM~\cite{ouyang2013stochastic}. {In particular, we rely on  synthetic} data:
$l = (\tilde{l};1)$, ${\tilde l} \sim N({\bf 0},
\Sigma_l), \Sigma_l = (\sigma_l^2 \times 0.5^{| i - j |} )_{i,j},$
$\vvs{s = l^Tx_{\rm true} + \epsilon_s}$, where $\epsilon_s \sim
N( 0, \sigma_s^2)$. {We assume that $l$} and $\epsilon_s$ are
independent \vvs{random variables}. To generate a deterministic $x_{\rm
\vvs{true}}$, we \vvs{generate} $n-1$ i.i.d random \vvs{samples} where  $rs_i
\sim U[-50,50], i = 1,\hdots,n-1$. and let \begin{align}\label{x-true - SIADMM}
[x_{\rm true}]_i := 
\begin{cases}
rs_i & \mbox{if } | rs_i | \leq 5, \\
0 & \mbox{if } | rs_i | > 5.
\end{cases}
\end{align}
Furthermore, let $[x_{\rm true}]_n \sim B(1,p)$ (\vvs{Bernoulli}). Then
approximately $\frac{n}{10}$ \sv{of the} elements are \sv{nonzero}. {In our experiments}, we
set $\sigma_l^2 = \sigma_s^2 = 5$ and {the stochastic part of the
objective has an explicit deterministic form given by}  $(x - x_{\rm true})^T
\Sigma (x - x_{\rm true}) +
\sigma_s^2 $, as seen next. 
\begin{align*}
& \quad \ \bE[ ( l^Tx - s )^2 ]
 = \bE[ ( l^T x - (l^T x_{\rm true} + \epsilon_s ) )^2 ] 
 = \bE[ ( l^T(x - x_{\rm true}) - \epsilon_s )^2 ] \\
& = \bE[ (x - x_{\rm true})^T l l^T (x - x_{\rm true}) - 2 \epsilon_s l^T (x - x_{\rm true}) + \epsilon_s^2 ] \\
& = \bE[ (x - x_{\rm true})^T l l^T (x - x_{\rm true}) ] - 2 \bE[\epsilon_s] \bE[ l^T (x - x_{\rm true}) ] + \bE[\epsilon_s^2]\\
& = (x - x_{\rm true})^T \bE[ l l^T ] (x - x_{\rm true}) + \sigma_s^2 
 = (x - x_{\rm true})^T \Sigma (x - x_{\rm true}) + \sigma_s^2,
\end{align*}
where $\Sigma \triangleq \bE[ll^T] = {\tiny \pmat{ \Sigma_l & \\ &1}}$. Therefore, the optimal solution $x^*$ and {the optimal objective} $F^*$ can be obtained
by a deterministic proximal gradient algorithm \cite{parikh2014proximal}. 

\noindent {\em Implementation of \texttt{SADM0}~\cite{ouyang2013stochastic}:} The stochastic ADMM \vs{scheme (\texttt{SADM0})}  {utilizes the following steps:}
\begin{align}
\tag{$x-{\rm update}$ }	x_{k+1} &:= \displaystyle
\hspace{-0.025in} \frac{-2(l_k^Tx_k - s_k)l_k + \lambda_k + \rho y_k + \eta_k \cdot x_k}{\rho + \eta_k } \\
\tag{$y-{\rm update}$}	y_{k+1} &: = \displaystyle \hspace{-0.025in} S_{\frac{\bar \gamma}{\rho}} (x_{k+1} - \lambda_k / \rho) \\
\tag{$\lambda-{\rm update}$}	\lambda_{k+1} &:= \lambda_k - \rho( x_{k+1} - y_{k+1} ),
\end{align}
where $(l_k, s_k)$ {denote the samples generated} at iteration $k+1$,
$\eta_k = 1000 \sqrt{k}$, $x_0 = {\bf 0}$, $y_0 = {\bf 0}$, $\lambda_0 = {\bf 0}$, 
  and the soft-thresholding operator {$S_{\alpha}(\cdot)$} is defined as \vs{follows}.
\begin{align} \label{soft-thresh operator}
S_{\alpha} (x)\triangleq 
\begin{cases}
x_{i} - \alpha & \mbox{if } x_i > \alpha, \\
0 & \mbox{if }  -\alpha \leq x_i \leq \alpha, \\
x_{i} + \alpha & \mbox{if } x_i < -\alpha.
\end{cases}
\end{align}
 \noindent {\em Implementation of SI-ADMM:} The simplified version contains following updates in each iteration:
{
\begin{align} \tag{exact $y$-update}
& y_{k+1} = S_{\frac{\bar \gamma}{\rho}} (x_k - \lambda_k/\rho) \\
\notag
& x_{k,j+1} := x_{k,j} - \frac{\gamma_{x}}{j} [\nabla_x \tilde{\mathcal{L}}_{1}(x_{k,j},y_{k+1},\lambda_k,\xi^x_{k,j}) + P(x_{k,j} - x_k) ], 
\notag
 \qquad \quad j = 1, \hdots,T_{k+1}^x -1,  x_{k,1} := x_k,\\
& {x_{k+1} := x_{k, T_{k+1}^x}} \tag{Inexact $x$-update}	\\
& \tag{$\lambda$-update}	\lambda_{k+1} := \lambda_k - \rho( x_{k+1} - y_{k+1}),
\end{align}
where $\tilde{\Lscr}_1$ {is} defined in \eqref{L1-SIADMM}}. We stop the algorithm when the outer loop iteration number reaches maximum. \vs{The parameters are defined \yxf{and calculated} based on Lemma~\ref{Lm: errorbound-SIADMM}, Corollary~\ref{choice of delta - SI-ADMM} and the property of augmented Lagrangian function $\Lscr_\rho(x,y,\lambda)$}: $P = {\bf 0}, x_0 = {\bf 0}, \lambda_0 = {\bf 0}, $ 
$T_{k+1}^x = \max\{ K_x, \lceil T/\eta^k \rceil \}$, $T = 1000$, $\eta = {(1 + \delta/2)}/({1 + \delta})$, $K_x = \lceil \gamma_x^2 M_x / (2 c_x \gamma_x - 1) \rceil + 1$, $\gamma_x = 1/c_x, c_x = 2\lambda_{\min}(\Sigma) + \rho$, $M_x = L_x^2 + (1 + 1/R) v_1^x$, 
$R = 1$, $L_x = 2 \lambda_{\max}(\Sigma) + \rho$, $v_1^x = 8 \lambda_{\max}( \bE[(ll^T - \Sigma)^2 ])$,
$
\delta = 2 \left( \frac{\rho}{2 \lambda_{\min}(\Sigma)} + \frac{2 \lambda_{\rm max}(\Sigma)}{\rho} \right)^{-1}.
$
We derive $v^x_1$ and $v^x_2$ by first deriving a bound on $\bE [\|w^x\|^2]$. If $\Sigma_l = (\sigma_{ij})_{ij}, i,j \in \{ 1,\hdots,n-1 \}$, then from the definition of $w^x$ in Corollary~\ref{Corr: theobound},
\begin{align*}
& \quad \bE [ \| w^x \|^2 ] 
 =  \bE[ \| 2 l(l^Tx - s) - 2 \Sigma (x - x_{\rm true}) \|^2 ] 
 = \bE[ \| 2 l(l^Tx - (l^Tx_{\rm true} + \epsilon_s ) ) - 2 \Sigma (x - x_{\rm true}) \|^2 ] \\
& = \bE[ \| 2 ( l l^T - \Sigma )( x - x_{\rm true} ) - 2 l \epsilon_s \|^2 ] 
 = 4 ( x - x_{\rm true} )^T \bE[ (ll^T - \Sigma)^2 ] ( x - x_{\rm true} ) \\
&  - 8 \bE[\epsilon_s]\bE[ l^T(ll^T - \Sigma)(x- x_{\rm true}) ] + 4 \bE[l^Tl] \bE[\epsilon_s^2] \\
& \leq 8 x^T \bE[ (ll^T - \Sigma)^2 ] x + 8 x_{\rm true}^T \bE[ (ll^T - \Sigma)^2 ] x_{\rm true} + 4 \sigma_s^2 \sum_{i=1}^n \bE[l_i^2] \\
& = 8 x^T \bE[ (ll^T - \Sigma)^2 ] x + 8 x_{\rm true}^T \bE[ (ll^T - \Sigma)^2 ] x_{\rm true} + 4 \sigma_s^2 \left( \sum_{i=1}^{n-1} \sigma_{ii} + 1 \right)
\end{align*} 
Therefore, $v^x_1$ and $v^x_2$ {may be derived as follows}: 
\begin{align}
\label{v^x_2 - SIADMM}
& v^x_1  =  8 \lambda_{\max} \left( { \bE[ (ll^T - \Sigma)^2 ] } \right), \mbox{ and }
 v^x_2  = 8 x_{\rm true}^T \bE[ (ll^T - \Sigma)^2 ] x_{\rm true} + 4 \sigma_s^2 \left( \sum_{i=1}^{n-1} \sigma_{ii} + 1 \right). 
\end{align}
The following lemma shows we may compute $\bE[ (ll^T - \Sigma)^2 ]$ in closed form and we use the $\texttt{eig}$ function in $\texttt{Matlab}$ is to compute its eigenvalues.
\begin{lemma}
Suppose $l = (\bar{l};1), \bar{l} \sim N({\bf 0}, \Sigma_l), \Sigma_l \triangleq (\sigma_{ij})_{ij}, \Sigma \triangleq \bE[ll^T]$. If we let $V \triangleq (v_{ij})_{ij} \triangleq \bE[ (ll^T - \Sigma)^2 ] $, then 
\begin{align*}
v_{ij} = &
\begin{cases}
\sigma_{ij} \sum_{k=1}^{n-1} \sigma_{kk} + \sum_{k=1}^{n-1} \sigma_{ik} \sigma_{jk} + \sigma_{ij}, \quad \mbox{ if } i \neq n, j \neq n, \\
0, \quad \mbox{ if } i = n, j \neq n,
 \mbox{or } i \neq n, j = n, \\
\sum_{k=1}^{n-1} \sigma_{kk},  \quad \mbox{ if } i = j = n.
\end{cases}
\end{align*}
\end{lemma}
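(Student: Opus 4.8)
The plan is to reduce the computation of $\bE[(ll^T-\Sigma)^2]$ to a single fourth-moment calculation and then exploit the block structure induced by the fixed last coordinate. First I would expand $(ll^T-\Sigma)^2 = ll^Tll^T - ll^T\Sigma - \Sigma ll^T + \Sigma^2$ and observe that $ll^Tll^T = l(l^Tl)l^T = \|l\|^2 ll^T$. Taking expectations and using $\bE[ll^T]=\Sigma$, both cross terms equal $\Sigma^2$, so they collapse against the $+\Sigma^2$ term and yield the clean identity
\begin{align*}
V = \bE[(ll^T-\Sigma)^2] = \bE[\|l\|^2 ll^T] - \Sigma^2.
\end{align*}
This isolates the only genuinely stochastic object, $\bE[\|l\|^2 ll^T]$, leaving $\Sigma^2$ as a deterministic correction.

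Next I would exploit $l=(\bar l;1)$ with $\bar l \sim N({\bf 0},\Sigma_l)$, so that $\|l\|^2 = \|\bar l\|^2 + 1$, $\Sigma = \diag(\Sigma_l,1)$, and hence $\Sigma^2 = \diag(\Sigma_l^2,1)$. Writing $\|l\|^2 ll^T$ in $2\times 2$ block form, the top-left block is $(\|\bar l\|^2+1)\bar l\bar l^T$, the off-diagonal blocks are $(\|\bar l\|^2+1)\bar l$, and the bottom-right entry is $\|\bar l\|^2+1$. Because $\bar l$ is zero-mean Gaussian, every odd moment vanishes: $\bE[\bar l]={\bf 0}$ and $\bE[\|\bar l\|^2\bar l]={\bf 0}$, so both off-diagonal blocks of $\bE[\|l\|^2 ll^T]$ are zero, and since $\Sigma^2$ is also block-diagonal this immediately gives $v_{ij}=0$ whenever exactly one of $i,j$ equals $n$. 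The bottom-right entry is $\bE[\|\bar l\|^2]+1 = \sum_{k=1}^{n-1}\sigma_{kk}+1$, and subtracting $(\Sigma^2)_{nn}=1$ produces $v_{nn}=\sum_{k=1}^{n-1}\sigma_{kk}$.

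The crux is the top-left block, which requires the fourth moment $(\bE[\|\bar l\|^2 \bar l\bar l^T])_{ij} = \sum_{k=1}^{n-1}\bE[\bar l_k^2 \bar l_i \bar l_j]$. Here I would invoke Isserlis' (Wick's) theorem for zero-mean Gaussians, giving $\bE[\bar l_k\bar l_k\bar l_i\bar l_j] = \sigma_{kk}\sigma_{ij} + 2\sigma_{ki}\sigma_{kj}$, and sum over $k$ to obtain $\sigma_{ij}\sum_{k}\sigma_{kk} + 2\sum_{k}\sigma_{ik}\sigma_{jk}$ (using symmetry of $\Sigma_l$). Adding the $\Sigma_l$ contributed by the ``$+1$'' factor and subtracting $(\Sigma_l^2)_{ij}=\sum_{k}\sigma_{ik}\sigma_{jk}$, the quadratic terms combine as $2\sum_{k}\sigma_{ik}\sigma_{jk}-\sum_{k}\sigma_{ik}\sigma_{jk}=\sum_{k}\sigma_{ik}\sigma_{jk}$, yielding exactly $v_{ij}=\sigma_{ij}\sum_{k=1}^{n-1}\sigma_{kk}+\sum_{k=1}^{n-1}\sigma_{ik}\sigma_{jk}+\sigma_{ij}$. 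The only real obstacle is the bookkeeping in this fourth-moment step, namely applying Wick's theorem correctly and tracking the factor of $2$ against the $\Sigma^2$ subtraction; everything else is mechanical once the reduction $V=\bE[\|l\|^2ll^T]-\Sigma^2$ is in place.
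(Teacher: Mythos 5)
Your proof is correct and follows exactly the route the paper indicates: the paper omits the computation entirely, saying only that it ``utilizes Isserlis Theorem to calculate 4th-order moments,'' and your argument is precisely that calculation carried out in full (the reduction $V=\bE[\|l\|^2 ll^T]-\Sigma^2$, the block splitting via $l=(\bar l;1)$, and the Wick expansion $\bE[\bar l_k^2\bar l_i\bar l_j]=\sigma_{kk}\sigma_{ij}+2\sigma_{ki}\sigma_{kj}$ all check out and reproduce the stated entries of $V$). Nothing is missing; your writeup is simply the detailed version of the proof the authors chose to omit.
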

\begin{proof}
	The proof \vs{is omitted and utilizes} Isserlis Theorem~\cite{10.2307/2331932} to calculate 4th-order moments. 
\end{proof}
\noindent {\em \vvs{Implementation of  \texttt{SADM1}~\cite{azadi2014towards}}:} In {recent work}, {Azadi and Sra~\cite{azadi2014towards} prove a faster rate of convergence of ${\cal O}(1/\sv{K})$ in
terms of sub-optimality and infeasibility (\sv{where $K$ denotes the number of iterations}) for the stochastic ADMM scheme applied to  strongly convex risk functions 
by utilizing an alternate form of averaging}.
{In applying \sv{the resulting scheme (referred to as \texttt{SADM1})} to~\eqref{strucLASSO - SIADMM},
the following steps are taken \sv{at} iteration $k+1$.} \begin{align}
\tag{$x$ - update}
x_{k+1} & = \frac{-2 (l_k^T x_k - s_k) l_k + \lambda_k + \rho y_k + \sv{\tfrac{(k+2)\mu_f x_k}{2}}}{\rho + \sv{\tfrac{(k+2)\mu_f}{2}}}, \\
\tag{$y$ - update}
y_{k+1} & = S_{\frac{\bar \gamma}{\rho}} (x_{k+1} - \lambda_k / \rho), \\
\tag{$\lambda$ - update}
\lambda_{k+1}  & = \lambda_k - \rho (x_{k+1} - y_{k+1}),\\ 
\tag{$x$ - average}
\bar{x}_{k+1} & = \frac{2}{(k+2)(k+3)} \sum_{j=0}^{k+1}(j+1)x_j. \\
\tag{$y$ - average}
\bar{y}_{k+1} & = \frac{2}{(k+1)(k+2)} \sum_{j=1}^{k+1} j y_j,
\end{align}
where $\mu_f = 2 \lambda_{\min}(\Sigma)$ denotes the convexity constant of $\bE[ ( l^T x - s )^2 ]$. 

\noindent {{\bf (A1)} \em Insights from \sv{comparisons} with \texttt{SADM0}:} {We compare stochastic ADMM and
SI-ADMM in Table~\ref{tab: SIADMM vs. SADM0}, where each scheme uses the same samples by
fixing the seed in the random number generator and $x_1^*$, $x_2^*$ denote the
solutions given by SI-ADMM and \texttt{SADM0}, respectively.}
\begin{table}[htbp]
{
\begin{center}
\begin{tabular}{| c | c |c| c | c |c| c |}
\hline
dim. & $\rho$ & Sample & Tm$_1$(s) & Tm$_2$(s) & $\|x_1^*-x^*\|^2$  & $ \|x_2^*-x^*\|^2 $  \\ 
\hline
10 & 100 & 10919 & 0.39 & 1.04 & 5.82e-01 & 3.79e-01  \\ 
\hline 
10 & 100 & 70995 & 2.58 & 6.87 & 1.41e-01 & 9.99e-02  \\ 
\hline 
10 & 100 & 189144 & 6.75 & 17.91 & 3.15e-02 & 2.29e-02  \\ 
\hline 
10 & 50 & 11873 & 0.43 & 1.12 & 3.89e-01 & 3.67e-01  \\ 
\hline 
10 & 50 & 113692 & 4.02 & 10.67 & 2.21e-02 & 5.52e-02  \\ 
\hline 
10 & 50 & 485538 & 17.37 & 44.88 & \blue{5.44e-04} & \blue{1.71e-03}  \\ 
\hline 
10 & 20 & 14563 & 0.53 & 1.30 & 1.18e-01 & 3.32e-01  \\ 
\hline 
10 & 20 & 400799 & 14.30 & 37.12 & 7.13e-05 & 3.38e-03  \\ 
\hline 
10 & 20 & 6336323 & 224.62 & 631.96 & \blue{1.00e-04} & \blue{1.13e-04}  \\ 
\hline 
100 & 100 & 10919 & 1.67 & 2.65 & 1.47e+01 & 5.72e+00  \\ 
\hline 
100 & 100 & 70970 & 10.73 & 17.55 & 9.55e-01 & 4.91e-01  \\ 
\hline 
100 & 100 & 189018 & 28.48 & 50.10 & 8.82e-02 & 5.09e-02  \\ 
\hline 
100 & 50 & 11867 & 1.78 & 3.26 & 6.63e+00 & 5.33e+00  \\ 
\hline 
100 & 50 & 113377 & 17.20 & 28.06 & 5.45e-02 & 1.87e-01  \\ 
\hline 
100 & 50 & 482823 & 72.51 & 124.12 & \blue{1.26e-03} & \blue{2.62e-03}  \\ 
\hline 
100 & 20 & 14484 & 2.20 & 3.85 & 1.13e+00 & 4.39e+00  \\ 
\hline 
100 & 20 & 386699 & 58.10 & 99.65 & 1.00e-03 & 6.11e-03  \\ 
\hline 
100 & 20 & 5893508 & 884.55 & 1529.41 & \blue{1.39e-04} & \blue{1.76e-04}  \\ 
\hline 
\end{tabular}
\end{center}
\caption{(1)SI-ADMM vs.(2) \texttt{SADM0} on LASSO ($\bar \gamma = 0.1$)} 
\label{tab: SIADMM vs. SADM0}
}
\vspace{-0.3in}
\end{table}
\begin{figure}[htbp]
\vspace{-0.2in}
\begin{center}
\includegraphics[width=3in]{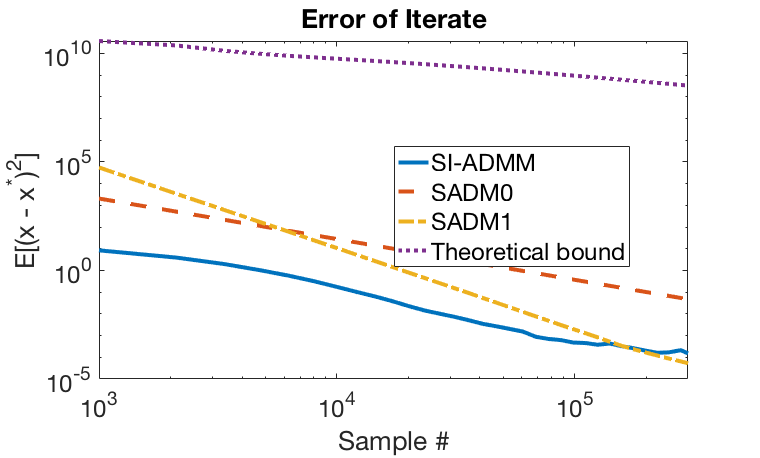}
\includegraphics[width=3.2in]{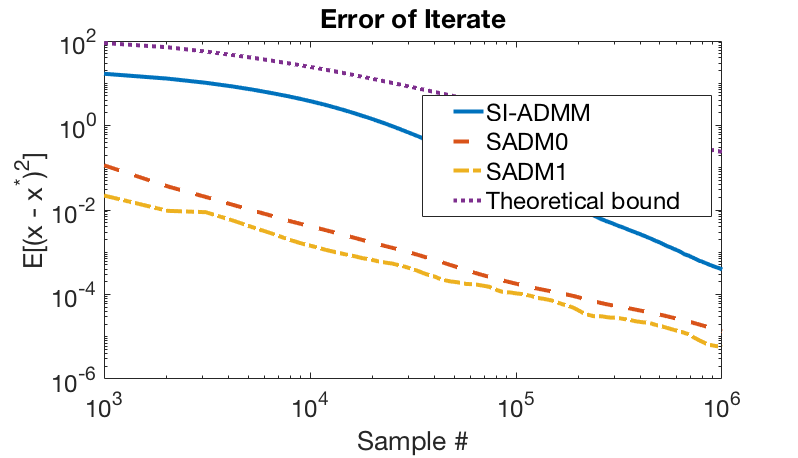}
\end{center}
\caption{SI-ADMM vs. \texttt{SADM0} vs. \texttt{SADM1}, 
$\rho = 10, n = 10$ (L) and 
$\rho = 50, n = 10$ (R), \yxf{log-scale}.}
\label{Fig: SIADMMvsSADM1}
\end{figure}

As seen \vs{in} Table~\ref{tab: SIADMM vs. SADM0}, for the same sample size and dimension, the
performances of SI-ADMM and \texttt{SADM0} vary \vs{with the choice of }
$\rho$. {When $\rho$ is small, SI-ADMM may outperform \texttt{SADM0} given
enough data (highlighted in blue).} Meanwhile, SI-ADMM {generally requires}
less time than \texttt{SADM0} {for the same \vs{sample complexity}}. {After all,
\texttt{SADM0} needs to compute an $x$-update, a $y$-update, and a $\lambda$-update
\vs{for every sampled gradient}, while SI-ADMM just
needs a gradient step. }

\noindent {{\bf (A2)} \em Insights from comparing with \texttt{SADM0} and
\texttt{SADM1}.} To compare SI-ADMM with \texttt{SADM0} and \texttt{SADM1} on 
\eqref{strucLASSO - SIADMM}, we plot trajectories of mean error of iterate $x_k$, shown in Figure~\ref{Fig: SIADMMvsSADM1}. {Under the same experimental settings, we add an averaging step to \texttt{SADM0}, defined as follows in the $(k+1)$th
iteration: $\bar{x}_{k+1} \vvs{ \triangleq } \frac{1}{k+2} \sum_{i = 0}^{k+1}
x_i$ and $\bar{y}_{k+1} \vvs{ \triangleq } \frac{1}{k+1} \sum_{i = 1}^{k+1}
y_i$. 
Furthermore, let $\eta_k = \mu k$ in \texttt{SADM0}, where $\mu_f = 2 \lambda_{\min} (\Sigma)$ \vvs{denotes the
strong convexity} constant. During the
implementation of \texttt{SADM1}, we take $x_0 = {\bf 0}, y_0 = {\bf 0},$ and $\lambda_0 = {\bf 0}$ as initial
values for iterates}. We generate 10 replications for each algorithm starting
from the same initial point to calculate the empirical error of iterate. Specifically, we
calculate empirical mean error of iterate as follows: for
SI-ADMM at iteration $k$, suppose the 10 realizations of iterate $x_k$ are
$x_{k,1}, \hdots, x_{k,10}$. Then we use 
$\frac{1}{10} \sum_{i=1}^{10} \|
x_{k_i} - x^* \|^2$ to approximate $\bE[\| x_k - x^* \|^2]$. Likewise, we
estimate empirical mean error of iterate at $\bar{x}_k$
for {\texttt{SADM0} and \texttt{SADM1}}. The \yxf{left} plot in Figure~\ref{Fig: SIADMMvsSADM1} {displays} the
trajectories of empirical mean error of iterate when $n = 10, \rho = 10$, while \yxf{the right} plot in Figure~\ref{Fig: SIADMMvsSADM1} is
generated when $\rho$ is {raised} to $50$. {The} x-axis {denotes}  the number of samples
$(l_k,s_k)$ drawn from the pre-specified distributions. The theoretical upper bound of
$\bE[ \| x_k - x^* \|^2]$ of SI-ADMM is calculated based on
Corollary \ref{Corr: theobound}. In particular, we take \vs{expectations} on both
sides of \eqref{theobound-supermartingale}, replace $T_{k+1}^x$ with
$T/\eta^k$, and obtain the following 
inequality for any $k \geq 0$ and $R_{0,k} > 0$: \vs{\begin{align}
\label{Theobound-SIADMM}
 \bE[ \| u_{k+1} - u^* \|_G^2] \leq \left( \frac{(1 + R_{0,k})\zeta_2}{{T}/{\eta^k}} + \frac{1+\tfrac{1}{R_{0,k}}}{1+\delta} \right) \bE[ \|u_k - u^*\|_G^2 ] + \frac{ (1+R_{0,k}) \zeta_1}{{T}/{\eta^k}},  
\end{align}}
where \vs{$\zeta_1, \zeta_2$} 
are defined as follows, \yxf{(see proof of Corollary~\ref{Corr: theobound} in 
Appendix)
\begin{align}
\notag
& \zeta_2 = (\lambda_{\max}(\hat{P}) + \rho\gamma \| A \|^2 ) C_{1,2}^x, \quad \zeta_1 = ( \lambda_{\max}(\hat{P}) + \rho \gamma\| A \|^2 ) C_{1,1}^x, \\
\notag
& C_{1,2}^x = \left( \frac{\gamma_x^2}{2c_x\gamma_x - 1 - \gamma_x^2 M_x / K_x } + K_x b_x \right)  \frac{2\sv{v_1^x (1 + R)}}{\lambda_{\min} (\hat{P})(1+\delta) } 
 + \frac{2\sv{K_x a_x }}{\lambda_{\min}(\hat{P})} \left(1+ \frac{1}{1+\delta} \right), \\
\notag
& C_{1,1}^x = \left( \frac{\gamma_x^2}{2c_x\gamma_x - 1 - \gamma_x^2 M_x / K_x } + K_x b_x \right)\left( \sv{2}v_1^x(1+R) \| x^* \|^2 + v_2^x \right).
\end{align}
}
Recall that $u_k = (x_k;y_k;\lambda_k), A = I_{n \times n}, \hat{P} = P + \rho A^T A, v_2^x =  8 x_{\rm true}^T \bE[ (ll^T - \Sigma)^2 ] x_{\rm true} + 4 \sigma_s^2 ( \sum_{i=1}^{n-1} \sigma_{ii} + 1 )$ according to \eqref{v^x_2 - SIADMM}, and
\begin{align*}
& a_x =  \prod_{i=1}^{K_x - 1} a_i, \quad  a_i = \yux{1 - \tfrac{2c_x \gamma_x}{i} + \tfrac{M_x \gamma_x^2}{i^2} }, 
 \quad b_x = \sum_{i=1}^{K_x-2} ( \tfrac{\gamma_x^2}{i^2}  a_{i+1} \cdot \hdots \cdot a_{K_x-1}) + \sv{\tfrac{\gamma_x^2}{(K_x-1)^2}},  
\end{align*}
based on Remark~\ref{Choices of a and b}. Last, we set $R = 1$. \vvs{The values of the} other parameters are already specified during the implementation of SI-ADMM. Noting that $\bE[ \| x_{k+1} - x^* \|^2 ] \leq \frac{1}{\rho} \bE[ \| u_{k+1} - u^* \|_G^2 ]$, the upper bound for $\bE[ \| x_{k+1} - x^* \|^2 ]$ is calculated through dividing the right-hand side of \eqref{Theobound-SIADMM} by $\rho$. Here $\bE [\| u_k - u^* \|_G ^2 ]$ is approximated by sample average calculated by the algorithm and
\yxf{ 
\begin{align*}
& R_{0,k} \triangleq \sv{\sqrt{ \tfrac{\bE [\| u_k - u^* \|_G ^2 ]}{(1+\delta) \mathcal{D}} }},\quad \mathcal{D} = \sv{\tfrac{\zeta_2 \eta^k\bE [\| u_k - u^* \|_G ^2 ]}{T}   + \tfrac{ \zeta_1 \eta^k}{T}}.
\end{align*}}
From Figure~\ref{Fig: SIADMMvsSADM1}, we {may} draw similar conclusions. When $\rho$ is {smaller}, {the} performance of SI-ADMM is {generally} better. As seen {from} Figure~\ref{Fig: SIADMMvsSADM1}(\yxf{Left}), at the outset, SI-ADMM converges faster than the other two and is comparable to {SADM1} when {the} sample size is large.

\noindent \vs{\bf B. Distributed regression.} 
Next, we aim to solve \eqref{SOpt} \vvs{in a distributed fashion}. 
While one agent {may} obtain
{noise-corrupted sampled} gradients of $\tilde{f}$, the other has access to
{noise-corrupted sampled} gradients of $\tilde{g}$.

\noindent {\em Problem description:} Consider \eqref{SOpt}. Specifically, the experiment is designed as follows:
$f(x) = \bE[ ( l_1{(\xi)}^T x - s_1{(\xi)} )^2 ] $, $g(y) = \bE[ ( l_2\vvs{(\xi)}^T y - s_2\vvs{(\xi)} )^2 ]$. Denote $l_1(\xi),l_2(\xi),s_1(\xi),s_2(\xi)$ as $l_1,l_2,s_1,s_2$. Let $l_1 \sim N({\bf 0}, \Sigma_1), l_2 \sim N({\bf 0}, \Sigma_2), \Sigma_1 = \Sigma_2 \triangleq \Sigma =  (\sigma_l^2 \times 0.5^{|i-j|} )_{ij}, s_1 = l_1^T \beta_1 + \epsilon_{s1}, s_2 = l_2^T \beta_2 + \epsilon_{s2},  \epsilon_{s1} \sim N({\bf 0} , \sigma_s^2),  \epsilon_{s2} \sim N({\bf 0} , \sigma_s^2)$. Denote $\Sigma = (\sigma_{ij})_{ij}$, and
$A = (a_{ij})_{ij}$. Let $$a_{ij} = \begin{cases}
0, & \mbox{ if } i > j, \\
3, & \mbox{ if } i = j, \\
\sim N(0,0.01), & \mbox{ if } i < j.
\end{cases}$$
$B = -I_n, b = {\bf 0}, \sigma_l^2 = \sigma_s^2 = 5.$ Let $rs_i \sim U[-50,50], i = 1,\hdots,n$ \vvs{then $\beta_2$ is defined as follows:}
\begin{align}\label{beta-SIADMM}
[\beta_2]_i \triangleq 
\begin{cases}
rs_i & \mbox{if } | rs_i | \leq 5, \\
0 & \mbox{if } | rs_i | > 5.
\end{cases}
\end{align}
Then let $\beta_1 = A^{-1} \beta_2$. The optimal solution $z^* \triangleq (x^*; y^*) =
(\beta_1; \beta_2)$ and objective function value $F^*$ equals to $2\sigma_s^2$
{since} $f$ and $g$ are given by $f(x) = (x - \beta_1)^T
\Sigma_1 (x - \beta_1) + \sigma_s^2 $, $g(y) = (y - \beta_2)^T \Sigma_2 (y -
\beta_2) + \sigma_s^2 $ and $(\beta_1;\beta_2)$ satisfies the constraint $Ax -
y = {\bf 0}$. {These closed-form expressions may be derived \sv{in a  similar fashion} to  that} in
Section~\ref{sec: Numerics-SIADMM} (A). This experiment fits well in the setting where two
agents want to estimate the true value $\beta_1$ and $\beta_2$, respectively,
given appropriate sample inputs and outputs. Furthermore, they know $\beta_1$
and $\beta_2$ {are related through}  $\beta_2 = A \beta_1$, \yxb{and this
relation can be enforced through the constraint: $Ax-y= {\bf 0}$.} Moreover,
\sv{agents} \sv{utilize local computations that do not require sharing data.} 
We compare SI-ADMM with a distributed stochastic approximation (DSA) algorithm \sv{for this problem}.

\noindent {\em Implementation of SI-ADMM:} SI-ADMM contains the following updates in iteration $k+1$:
{
\begin{align} 
\notag
 y_{k,j+1} &:= y_{k,j} - \frac{\gamma_{y}}{j}\left(\nabla_y  \tilde{\mathcal{L}}_{2}(x_k,y_{k,j},\lambda_k,\xi^y_{k,j}) + Q(y_{k,j} - y_k)\right) , 
 j = 1, \hdots, T_{k+1}^y -1, y_{k,1} := y_k ,\\
 y_{k+1} & := y_{k, T_{k+1}^y} \tag{Inexact $y$-update} \\
\notag
 x_{k,j+1} &:= x_{k,j} - \frac{\gamma_{x}}{j} \left(\nabla_x \tilde{\mathcal{L}}_{1}(x_{k,j},y_{k+1},\lambda_k,\xi^x_{k,j}) + P(x_{k,j} - x_k)\right), 
 j = 1, \hdots,T_{k+1}^x -1,  x_{k,1} := x_k,\\
 x_{k+1} & := x_{k, T_{k+1}^x} \tag{Inexact $x$-update} \\
 \tag{$\lambda$-update}	\lambda_{k+1} &:= \lambda_k - \rho( Ax_{k+1} - y_{k+1}), 
\end{align}
where $\tilde{\Lscr}_1$ and $\tilde{\Lscr}_2$ are defined in \eqref{L1-SIADMM} and \eqref{L2-SIADMM}.} Values for parameters are assigned or computed as follows, based on Corollary~\ref{choice of delta - SI-ADMM}, Lemma~\ref{Lm: errorbound-SIADMM}, and property of augmented Lagrangian $\Lscr_\rho(x,y,\lambda)$:
\begin{align*}
& Q = \rho I_{n \times n}, P = {\bf 0}, R = 1, T = 1000, x_0 = {\bf 0}, 
 y_0 = {\bf 0}, \lambda_0 = {\bf 0}, \eta = \frac{1}{1 + \delta}, \\
&  T_{k+1}^y = \max\{ K_y, \lceil T/\eta^k \rceil \}, T_{k+1}^x = \max\{ K_x, \lceil T/\eta^k \rceil \}, \\
& \delta = \min \left\{ \frac{4\lambda_{\min}(\Sigma)}{\rho}, 2 \left( \frac{\rho \lambda_{\max}(A^TA) }{2 \lambda_{\min}(\Sigma)} + \frac{2 \lambda_{\rm max}(\Sigma)}{\rho \lambda_{\min}(AA^T) } \right)^{-1} \right\}\\
& K_x = \lceil \gamma_x^2 M_x \rceil + 1, K_y = \lceil \gamma_y^2 M_y \rceil + 1, 
 \gamma_x = 1/c_x, c_x = 2\lambda_{\min}(\Sigma) + \rho, M_x = L_x^2 + 2 v_1^x, \\
& L_x = 2 \lambda_{\max}(\Sigma) + \rho,  \gamma_y = 1/c_y, c_y = 2\lambda_{\min}(\Sigma) + 2\rho, M_y = L_y^2 + 2 v_1^y,
 L_y = 2 \lambda_{\max}(\Sigma) + 2\rho, \\
& v_1^x =  v_1^y = 8 \lambda_{\max}( \bE[(l_1l_1^T - \Sigma)^2 ] ) = 8 \lambda_{\max}( \bE[(l_2l_2^T - \Sigma)^2 ] ).
\end{align*}
We run the outer loop for $100$ iterations before \sv{termination} and \yux{set $\rho = 20$ while $v_1^x$ and $v_1^y$ are derived as a fashion similar to that in Section~\ref{sec: Numerics-SIADMM} (A), as captured by the next Lemma.}
\begin{lemma}\label{closed form V 2-SIADMM}
Suppose $l \sim N({\bf 0}, \Sigma)$. If $V \triangleq (v_{ij})_{ij} \triangleq \bE[ (ll^T - \Sigma) ] $, then
$v_{ij} = \sigma_{ij} \sum_{k=1}^n \sigma_{kk} + \sum_{k=1}^n \sigma_{ik} \sigma_{jk}.$
\end{lemma}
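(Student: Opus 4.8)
The plan is to reduce the claimed entrywise identity to a single fourth-order Gaussian moment computation, exactly as in the preceding lemma but now for the full Gaussian vector $l \sim N({\bf 0}, \Sigma)$ with no appended deterministic coordinate. (I note in passing that the definition of $V$ should read $V \triangleq \bE[(ll^T - \Sigma)^2]$; as written $\bE[ll^T - \Sigma]$ vanishes identically since $\bE[ll^T] = \Sigma$, so the intended quantity is the matrix square, consistent with the formula on the right.) Writing $M \triangleq ll^T - \Sigma$, the matrix $M$ is symmetric, so the $(i,j)$ entry of $M^2$ is $\sum_{k=1}^n M_{ik} M_{kj} = \sum_{k=1}^n (l_i l_k - \sigma_{ik})(l_k l_j - \sigma_{kj})$. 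Expanding the product and taking expectations termwise, the two mixed terms collapse using $\bE[l_a l_b] = \sigma_{ab}$, leaving $v_{ij} = \sum_{k=1}^n \big( \bE[l_i l_j l_k^2] - \sigma_{ik}\sigma_{kj} \big)$, so the entire computation hinges on the single fourth moment $\bE[l_i l_j l_k^2]$.

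The key step is to evaluate this fourth moment via Isserlis' (Wick's) theorem for zero-mean jointly Gaussian variables, which expresses a fourth-order moment as the sum over the three pairings, $\bE[l_a l_b l_c l_d] = \sigma_{ab}\sigma_{cd} + \sigma_{ac}\sigma_{bd} + \sigma_{ad}\sigma_{bc}$. Setting $(a,b,c,d) = (i,j,k,k)$ then yields $\bE[l_i l_j l_k^2] = \sigma_{ij}\sigma_{kk} + 2\sigma_{ik}\sigma_{jk}$. This is the only nonelementary ingredient, and it is precisely the tool cited for the previous lemma.

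Substituting this back and using symmetry $\sigma_{kj} = \sigma_{jk}$, the combination $2\sigma_{ik}\sigma_{jk} - \sigma_{ik}\sigma_{kj}$ reduces to $\sigma_{ik}\sigma_{jk}$, so that $v_{ij} = \sum_{k=1}^n \big( \sigma_{ij}\sigma_{kk} + \sigma_{ik}\sigma_{jk} \big) = \sigma_{ij}\sum_{k=1}^n \sigma_{kk} + \sum_{k=1}^n \sigma_{ik}\sigma_{jk}$, which is the asserted identity. I do not anticipate a genuine obstacle: the only care required is the correct bookkeeping of the three Wick pairings (in particular that the repeated index $k$ produces the factor $2$ on the $\sigma_{ik}\sigma_{jk}$ term) and the cancellation against the subtracted $\sigma_{ik}\sigma_{kj}$; everything else is elementary expansion and summation over $k$.
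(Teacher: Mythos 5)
Your proof is correct and follows exactly the route the paper intends: the paper omits the proof of this lemma but, as with the preceding lemma, the stated tool is Isserlis' theorem for fourth-order Gaussian moments, which is precisely what you use, and your bookkeeping of the three Wick pairings and the cancellation against $\sigma_{ik}\sigma_{kj}$ checks out. You are also right that the statement contains a typo --- $V$ should be $\bE[(ll^T-\Sigma)^2]$ rather than $\bE[ll^T-\Sigma]$, as is clear both from the right-hand side and from how the lemma is invoked to compute $\bE[(l_1l_1^T-\Sigma)^2]$ in the numerics.
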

\noindent {\em Theoretical bounds:} The theoretical bounds for $\bE[ \| u_{k+1} - u^* \|_G^2 ]$ are developed based on Lemma~\ref{lm: main recursive inequality}(i), \yxf{where we obtain the following by using \eqref{supermartigale inequality 0} and $T_{k+1}^y \geq T/\eta^k $, $T_{k+1}^x \geq T/\eta^k $:
\begin{align*}
\notag
\bE[\| u_{k+1} - u^* \|^2_G ] & \leq \left( (1+R_{0,k}) \left( \frac{\zeta_2^x + \zeta_2^y}{T / \eta^k} + \frac{\zeta_2^{xy} }{T^2 / \eta^{2k}} \right) + \frac{1 + 1/R_{0,k}}{1 + \delta} \right)  \vs{\mathbb{E}[\| u_k - u^* \|_G^2]} \ \\
\notag
& + (1+R_{0,k}) \left( \frac{ \zeta_1^x + \zeta_1^y }{T / \eta^k} + \frac{\zeta_1^{xy}}{T^2/ \eta^{2k} } \right).
\end{align*}
}
\yxf{Formulas of $\zeta_1^x,\zeta_1^y,\zeta_1^{xy}, \zeta_2^x,\zeta_2^y,\zeta_2^{xy}$ \vs{may be found} in the proof of Lemma~\ref{lm: main recursive inequality}(i). To calculate these constants,} \sv{some} additional parameters including $a_x, b_x, a_y, b_y, v_2^x, v_2^y$ need to be known besides \sv{those} already computed during the implementation of SI-ADMM. In particular, as indicated by Remark~\ref{Choices of a and b},
\begin{align*}
& a_i^x = \left(1 - \tfrac{2c_x \gamma_x}{i} + \tfrac{M_x \gamma_x^2}{i^2}\right), 
 a_i^y = \left(1 - \tfrac{2c_y \gamma_y}{i} + \tfrac{M_y \gamma_y^2}{i^2} \right), 
 a_x =  \prod_{i=1}^{K_x - 1} a_i^x, a_y =  \prod_{i=1}^{K_y - 1} a_i^y, \\
 & b_x = \sum_{i=1}^{K_x-2} \left( (\gamma_x/i)^2  a_{i+1}^x \cdot \hdots \cdot a_{K_x-1}^x \right) + \tfrac{ \gamma_x^2}{(K_x-1)^2}, 
  b_y = \sum_{i=1}^{K_y-2} \left( \tfrac{\gamma_y^2}{i^2}  a_{i+1}^y \cdot \hdots \cdot a_{K_y-1}^y \right) + \tfrac{ \gamma_y^2}{(K_y-1)^2}.
\end{align*}
Meanwhile, $v_2^x = \beta_1^T \bE[ (l_1l_1^T - \Sigma)^2 ] \beta_1 + 4 \sigma_s^2(\sum_{k=1}^n \sigma_{kk})$, $v_2^y = \beta_2^T \bE[ (l_2 l_2^T - \Sigma)^2 ] \beta_2 + 4 \sigma_s^2(\sum_{k=1}^n \sigma_{kk})$. These formulas are derived in the same fashion as in  Section~\ref{sec: Numerics-SIADMM} (A). Computation of $\bE[ (l_1l_1^T - \Sigma)^2 ]$ and $\bE[ (l_2l_2^T - \Sigma)^2 ] $ can be referred to Lemma~\ref{closed form V 2-SIADMM}.  For all computations that involve $R$, set $R = 1$. $\bE[ \| u_k - u^* \|_G^2 ]$ is estimated by sample average obtained through running the algorithm 10 times.  At last, we set $R_{0,k} = \sqrt{ \tfrac{\bE[ \| u_k - u^* \|_G^2 ]}{(1+\delta)\mathcal{D}} }$, where
\yxf{
\begin{align*}
 \mathcal{D} = \left( \frac{ \zeta_2^x + \zeta_2^y +  \frac{ \zeta_2^{xy} }{T} \eta^k}{T} \right) \eta^k \bE[\| u_k - u^* \|_G^2] 
 + \left( \frac{ \zeta_1^x + \zeta_1^y +  \frac{ \zeta_1^{xy} }{T} \eta^k}{T} \right) \eta^k,
\end{align*}
}
and $T, \eta$ \sv{assume the same values as}  in SI-ADMM implementation.
Notice that $\bE[ \| x_k - x^* \|^2 + \| y_k - y^* \|^2 ] \leq \bE[ \| u_k - u^* \|_G^2 ] / \min\{ \lambda_{\min}(Q) , \lambda_{\min}(\hat{P}) \} $. Therefore, we obtain upper bound for $\bE[ \| x_k - x^* \|^2 + \| y_k - y^* \|^2 ]$ by dividing the bound for $\bE[ \| u_k - u^* \|_G^2 ] $ by $\min\{ \lambda_{\min}(Q) , \lambda_{\min}(\hat{P}) \}$.

\noindent {\em Implementation of DSA:} {The distributed SA (DSA) is a simple modification of standard SA {where the} gradient step for either $x$ or $y$ can be computed locally and {we employ a  projection onto a compact ball containing the solution to improve performance. }
In particular, (DSA)  contains the following steps in iteration $k$:}
\begin{align}
\tag{$x$ - update}
 \tilde{x}_{k+1}  & = x_k - \frac{1}{\mu_f k} \left( 2(l_{k,x}^Tx_k - s_{k,x})l_{k,x} \right), \\
\tag{$y$ - update}
 \tilde{y}_{k+1}  & = y_k - \frac{1}{\sigma_g k} \left( 2(l_{k,y}^Ty_k - s_{k,y})l_{k,y} \right), \\
\tag{$x$-projection}
 \bar{x}_{k+1} & = (I_n + A^TA)^{-1}( \tilde{x}_{k+1} + A^T \tilde{y}_{k+1} ), \\
\tag{$y$-projection}
 \bar{y}_{k+1}  &= A \bar{x}_{k+1}, \\
\tag{final projection}
 (x_{k+1};y_{k+1}) & = P_{Z} (\bar{x}_{k+1}; \bar{y}_{k+1})
\end{align}
where $(l_{k,x}, s_{k,x})$ and $(l_{k,y}, s_{k,y})$ are samples of $(l_1,s_1)$
and $(l_2,s_2)$ drawn at iteration $k$, respectively. $P_Z$ denotes the projection onto set $Z$. Note that $x$-update and
$y$-update can be computed by each agent and the projections could be finished
via a center or by each agent if they exchange $\tilde{x}_{k+1},
\tilde{y}_{k+1}$. During the implementation, we take $\mu_f = \sigma_g = 2
\lambda_{\min}(\Sigma)$ as the strongly convex constants, $x_0 = {\bf 0}, y_0 =
{\bf 0}$ as initial points. The final step requires projection onto a compact
ball $Z \triangleq \{ z: \| z \|^2 \leq  \Gamma \| (x^*;y^*) \|^2 \}$. The
choice of $\Gamma$ is vital to the performance of the distributed SA. In fact,
it reflects how confident one is about the location of $(x^*;y^*)$. 

\noindent {\bf Comparison between SI-ADMM and DSA:} Trajectories of error of iterates with respect to sample batch number are shown in the following figures. Note that a batch of samples contains two pairs of data points: $(l_{k,x}, s_{k,x}), (l_{k,y}, s_{k,y})$. During the implementation, 10 trajectories are generated for each algorithm, and we approximate the expectation $\bE[ \| x_k - x^* \|^2 + \| y_k - y^* \|^2 ]$ by sample average.  Figure~\ref{SIADMMvsDSA1} compares the performance of DSA against SI-ADMM. $\rho = 20$ and $\Gamma$ is chosen as $50, 5000$, and $500000$. $z_k  = (x_k; y_k), z^* = (x^*; y^*)$. Performance of DSA improves when $\Gamma$ decreases, implying estimation of $(x^*;y^*)$ becomes more accurate before running the algorithm (\vvs{impossible in practice}). Specifically, SI-ADMM ($\rho = 20$) is worse than DSA when $\Gamma = 50$, and better when $\Gamma \geq 5000$. 
The theoretical bound of $\bE[\|z_k - z^*\|^2]$ for SI-ADMM is also shown in  Figure~\ref{SIADMMvsDSA1}. Meanwhile, Table~\ref{tab:SIADMvsSA} displays more information of the comparison. We can see that SI-ADMM takes less time and generates better solutions given the same sample size when $\Gamma$ is relatively large. Moreover, if there is no projection onto the compact ball $Z$, DSA \vvs{produces poor results.} 


\begin{figure}[htbp!]
\begin{center}
\includegraphics[width=3.5in]{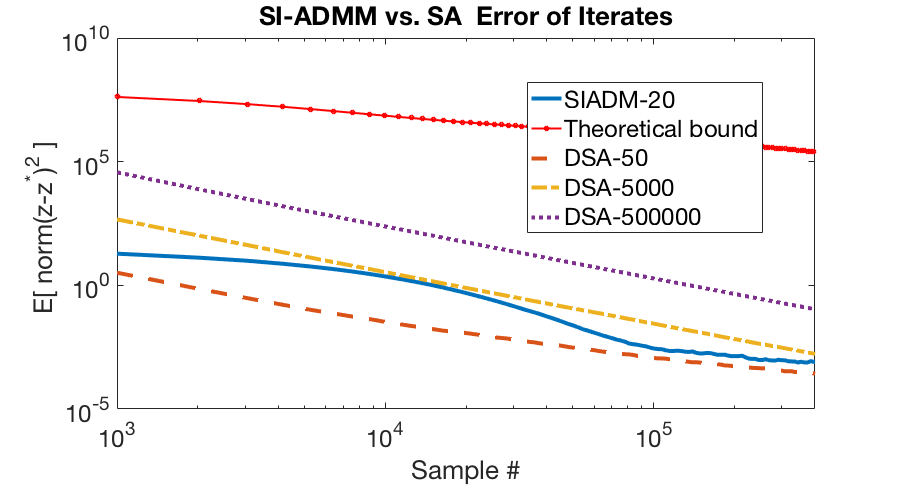}
\end{center}
\caption{SI-ADMM vs. DSA;
SIADM-20: SI-ADMM with $\rho = 20$; 
DSA-$\Gamma$: Distributed stochastic approximation with parameter $\Gamma$.
}\label{SIADMMvsDSA1}
\vspace{-0.2in}
\end{figure}

\begin{table}[htbp]
{
\begin{center}
\begin{tabular}{| c | c | c | c |}
\hline
\multicolumn{4}{|c|}{$n = 50$, sample batch $\#$ = 429139} \\
\hline
Method & Time(s) & $\bE[F(\bar z) - F^*]$ & $\bE [\| \bar z - z^* \|^2]$ \\
\hline
SIADM-20 & 46.4 & 3.08e-03 & 6.68e-04\\
\hline
DSA-5000 & 79.9 & 3.17e-03 & 1.50e-03\\
\hline
DSA-500000 & 79.7 & 1.61e-01 & 9.31e-02\\
\hline
DSA-PF & 73.5 & 2.14e+12 & 1.23e+12\\
\hline
\multicolumn{4}{|c|}{$n = 100$, sample batch $\#$ = 200028} \\
\hline
SIADM-20 & 76.8 & 1.41e-02 & 3.08e-03\\
\hline
DSA-5000 & 88.7 & 1.29e-01 & 7.23e-02\\
\hline
DSA-500000 & 88.8 & 2.27e+01 & 7.26e+00\\
\hline
DSA-PF & 88.1 & 2.69e+26 & 1.53e+26\\
\hline
\end{tabular}
\end{center}
\caption{SI-ADMM vs. DSA} 
 \label{tab:SIADMvsSA}
 The algorithms terminate when prescribed no. of samples are drawn; $\bar z$ is the updated pair $(\bar x, \bar y)$. $F(\bar z) = f(\bar x) + g(\bar y)$. Expectation approximated by sample average (10 runs). Time specified for single run. 
SIADM-20: SI-ADMM with $\rho = 20$; 
DSA-$\Gamma$: DSA with parameter $\Gamma$; 
DSA-PF: DSA without projection on a compact set.
}
\vspace{-0.25in}

\end{table}


\section{Concluding Remarks} \label{sec: conclusion-SIADMM}
\vvs{ADMM schemes have proved to be remarkably useful in solving a broad collection of structured optimization problems. Yet, much remains to be understood about how such schemes can be extended to stochastic regimes. Motivated by the gaps in schemes for contending with general structured stochastic convex problems}, we 
develop an implementable stochastic inexact ADMM (SI-ADMM) scheme based on solving the subproblems (inexactly) by stochastic approximation. We  provide almost sure convergence statements and rate guarantees for the iterates generated by this scheme under suitable assumptions on the inexactness sequence.  
Furthermore, the overall iteration complexity \vvs{is shown to be consistent
with the canonical result for stochastic approximation}.  Preliminary numerics
suggest that SI-ADMM \vs{performs} well \vs{compared} to its competitors.

\bibliographystyle{IEEEtran}
\bibliography{ref}

\section{Appendix}
\noindent{\bf Proof of Lemma~\ref{lm: main recursive inequality} \yux{(ii)}}
\begin{proof}
First let us define the following constants.
\begin{align}
C_2^x & \triangleq C_1^x \triangleq 2 \lambda_{\max}(\hat{P}) + 4\rho \gamma \lambda_{\max}(A^TA), \notag \\
\notag
\bar C_{1,2}^x & \triangleq \left( \tfrac{\gamma_x^2}{2 c_x \gamma_x -1 - \gamma_x^2M_x/K_x} + K_x b_x \right) v_1^x (1+R) \cdot \tfrac{3}{\lambda_{\rm min}(\hat{P})(1+\delta) }  
 + K_x a_x \tfrac{3}{\lambda_{\rm min}(\hat{P})} \left(1 + \left(\tfrac{1}{1+\delta}\right)\right), \notag \\
\hat C_{1,2}^x & = \hat C_{1,1}^x \triangleq  \left( \tfrac{\gamma_x^2}{2 c_x \gamma_x -1 - \gamma_x^2M_x/K_x} + K_x b_x \right) v_1^x \cdot 3(1+R)\left( \tfrac{\rho}{c_x} \| A^T B \| \right)^2  + 3 K_x a_x \left( \tfrac{\rho}{c_x} \| A^TB \| \right)^2, \notag \\
C_{1,2}^y & \triangleq \left( \tfrac{\gamma_y^2}{2 c_y \gamma_y -1 - \gamma_y^2M_y/K_y} + K_y b_y \right) v_1^y (1+R) \cdot \tfrac{2}{\lambda_{\rm min}(Q)(1+\delta) }   + K_y a_y \tfrac{2}{\lambda_{\rm min}(Q)} \left(1 + \tfrac{1}{1+\delta}\right),  \notag \\
C_2^y & \triangleq C_1^y \triangleq 2 \lambda_{\max}(\hat{P}) \left( \tfrac{\rho}{c_x} \| A^TB \| \right)^2 + \lambda_{\max}(Q) + 4 \left( \tfrac{\rho  \| A^TB \| }{c_x} \right)^2 \rho \gamma \lambda_{\max}(A^TA) 
 + 2\rho\gamma \lambda_{\max}(B^TB), \notag\\
\bar C_{1,1}^x & \triangleq \left( \tfrac{\gamma_x^2}{2 c_x \gamma_x -1 - \gamma_x^2M_x/K_x} + K_x b_x \right) ( \sv{3} v_1^x (1+R)  \| x^* \|^2 + v_2^x), \mbox{ and}\notag\\
C_{1,1}^y & \triangleq \left( \tfrac{\gamma_y^2}{2 c_y \gamma_y -1 - \gamma_y^2 M_y/K_y} + K_y b_y \right) ( \sv{2} v_1^y (1+R)  \| y^* \|^2 + v_2^y). \label{const-SIADMM}
\end{align}
Then, we show \eqref{supermartigale inequality 0} by proving that $\forall k \geq 0$, if $T_{k+1}^y \geq K_y, T_{k+1}^x \geq K_x$, then for all $R_{0,k} > 0$, 
\begin{align} \label{supermartigale inequality 0 - appendix}
\notag \bE[\| u_{k+1} - u^* \|^2_G \mid \scrF_k ] 
& \leq \left( (1+R_{0,k}) \left( \tfrac{C_2^x}{T_{k+1}^x} \left( \bar C_{1,2}^x + \hat C_{1,2}^x \tfrac{C_{1,2}^y}{T_{k+1}^y} \right) + \tfrac{C_2^y C_{1,2}^y}{T_{k+1}^y} \right) + \tfrac{1 + 1/R_{0,k}}{1 + \delta} \right) \| u_k - u^* \|_G^2 \ \\
& + (1+R_{0,k}) \left( \tfrac{C_1^x}{T_{k+1}^x } \left( \bar C_{1,1}^x + \hat C_{1,1}^x \tfrac{C_{1,1}^y}{T_{k+1}^y} \right)  + \tfrac{C_1^y C_{1,1}^y}{T_{k+1}^y} \right). 
\end{align}
Then by substitution $\zeta_2^x = C_2^x \bar C_{1,2}^x$, $\zeta_2^y = C_2^y C_{1,2}^y$, $\zeta_2^{xy} = C_2^x \hat C_{1,2}^x C_{1,2}^y$, $\zeta_1^x = C_1^x \bar C_{1,1}^x$, $\zeta_1^y = C_1^y C_{1,1}^y$, $\zeta_1^{xy} = C_1^x \hat C_{1,1}^x C_{1,1}^y$, \eqref{supermartigale inequality 0} follows.

To derive inequality \eqref{supermartigale inequality 0 - appendix},
   we need to derive  bounds on the following terms: $\bE[\| y_{k+1} - y_{k+1}^* \|^2 \mid \Fscr_k ]$,
   $\bE[\| \tilde{x}_{k+1}^* - x_{k+1}^* \|^2 \mid
   \Fscr_k]$, $\bE[ \| x_{k+1} - \tilde{x}_{k+1}^* \|^2 \mid \Fscr_k]$,
   $ \bE[ \| \lambda_{k+1} - \lambda_{k+1}^* \|^2 \mid \Fscr_k]$, and
   $\bE[ \| u_{k+1} - u_{k+1}^* \|_G^2 \mid \Fscr_k]$, \vs{where $y_{k+1}^*, x_{k+1}^*, \tilde{x}_{k+1}^*, \lambda_{k+1}^*, u_{k+1}^*$ are defined in \eqref{y_k_star-SIADMM}-\eqref{u_k_star-SIADMM}.} 

$\pmb{ \bE[\| y_{k+1} - y_{k+1}^* \|^2 \mid \Fscr_k] }$: By
	recalling that $T_{k+1}^y \geq K_y$ and from error
		bound~\eqref{y-errorbound} \yxe{in Lemma~\ref{Lm: errorbound-SIADMM}}, 
\begin{align*}
& \quad \bE[\| y_{k+1}-y_{k+1}^* \|^2 \mid \Fscr_k] 
 \leq \frac{\max \left\{\frac{\gamma_y^2 C_y^{(k+1)}}{2 c_y \gamma_y - \gamma_y^2 M_y/K_y - 1}, K_y \bar{e}_{K_y}^{y,k+1} \right\} }{T_{k+1}^y} \\ 
& \leq  \frac{\frac{\gamma_y^2 C_y^{(k+1)}}{2 c_y \gamma_y - \gamma_y^2 M_y/K_y - 1} + K_y ( a_y \| y_k - y_{k+1}^* \|^2 + b_y C_y^{(k+1)} ) }{T_{k+1}^y} \\
& =  \frac{ \left( \frac{\gamma_y^2}{2 c_y \gamma_y - \gamma_y^2 M_y/K_y - 1} + K_yb_y \right) C_y^{(k+1)} + K_y a_y \| y_k - y_{k+1}^* \|^2 }{ T_{k+1}^y} \\
& \leq \frac{ \left( \frac{\gamma_y^2}{2 c_y \gamma_y - \gamma_y^2 M_y/K_y - 1} + K_yb_y \right) [ v_1^y (1+R)\| y_{k+1}^* \|^2 + v_2^y ]  
  + K_y a_y \| y_k - y_{k+1}^* \|^2 }{T_{k+1}^y} \\
& \leq \frac{ \left( \frac{\gamma_y^2}{2 c_y \gamma_y - \gamma_y^2
		M_y/K_y - 1} + K_yb_y \right)  [ v_1^y (1+R)(2\| y_{k+1}^* - y^* \|^2 + 2 \| y^* \|^2) + v_2^y ] 
 + K_y a_y \| y_k - y_{k+1}^*
			\|^2}{T_{k+1}^y}.
\end{align*}
By invoking the definition of $C^y_{1,1}$,
			the expression on the right can be restated and bounded as follows.
\begin{align*}
& \quad \bE[\| y_{k+1}-y_{k+1}^* \|^2 \mid \Fscr_k] \\
&  \leq  \frac{ \left( \frac{\gamma_y^2}{2 c_y \gamma_y - \gamma_y^2 M_y/K_y - 1} + K_yb_y \right) v_1^y (1+R) \cdot 2 \| y_{k+1}^* - y^* \|^2  
 + K_y a_y \| y_k - y_{k+1}^* \|^2 +  C_{1,1}^y }{T_{k+1}^y} \\
&  \leq  \frac{ \left( \frac{\gamma_y^2}{2 c_y \gamma_y - \gamma_y^2 M_y/K_y - 1} + K_yb_y \right) v_1^y (1+R) \cdot 2\| y_{k+1}^* - y^* \|^2  + {2} K_y a_y (\| y_k - y^* \|^2 + \| y^* - y_{k+1}^* \|^2 ) + C_{1,1}^y}{T_{k+1}^y} \\
&  \leq \frac{ \left( \frac{\gamma_y^2}{2 c_y \gamma_y - \gamma_y^2 M_y/K_y - 1} + K_yb_y \right) \frac{2  v_1^y (1+R)}{\lambda_{\min}(Q)} \| u_{k+1}^* - u^* \|_G^2  + \frac{2 K_y a_y}{\lambda_{\min}(Q) } (\| u_k - u^* \|_G^2 + \| u^* - u_{k+1}^* \|_G^2 ) + C_{1,1}^y}{T_{k+1}^y} \\
&  \leq \frac{ \left( \frac{\gamma_y^2}{2 c_y \gamma_y - \gamma_y^2 M_y/K_y - 1} + K_yb_y \right) \frac{2  v_1^y (1+R)}{\lambda_{\min}(Q)} {\cdot} \frac{\| u_k^* - u^* \|_G^2}{1+\delta}  
 +  \frac{2 K_y a_y}{\lambda_{\min}(Q) } \left( \| u_k - u^* \|_G^2 + \frac{\| u^* - u_k^* \|_G^2}{1+\delta} \right) + C_{1,1}^y }{T_{k+1}^y} \\
& = \frac{C_{1,2}^y \| u_k - u^* \|_G^2 + C_{1,1}^y}{T_{k+1}^y},
\end{align*}
where the second inequality follows from noting that
$\|y_k-y^*_{k+1}\|^2 \leq 2\|y_k-y^*\|^2 + 2 \|y^*-y_{k+1}^*\|^2$, and
the fourth inequality is a consequence of inequality \eqref{contraction
	of iterates-SIADMM}.

$\pmb{ \bE[\| \tilde{x}_{k+1}^* - x_{k+1}^* \|^2 \mid \Fscr_k] } $:
From the definition of the updates
in $x$, in Algorithm~\ref{generalized admm} and~\ref{s-admm}, the gradient of the augmented Lagrangian is given by the following:
\begin{align}
 \label{eq-inexact update1} 
{\bf 0} & = \nabla_x \Lscr_\rho(x_{k+1}^*,y_{k+1}^*,\lambda_k) + P(x_{k+1}^* -
		 x_k)  \\
\label{eq-inexact update2}
{\bf 0} & = \nabla_x \Lscr_\rho(\tilde{x}_{k+1}^*,y_{k+1},\lambda_k) +
P(\tilde{x}_{k+1}^* - x_k)
\end{align}
Recall that $\Lscr_\rho(x,y_{k+1},\lambda_k) + \frac{1}{2}\|x - x_k \|_P^2$ is
$c_x-$strongly convex in $x$, {implying the following, where the second equality is from combining \eqref{eq-inexact update1} and \eqref{eq-inexact update2}, and fourth inequality follows from the Cauchy-Schwarz inequality.}
\begin{align}
\notag
& \quad c_x\|\tilde{x}_{k+1}^* - x_{k+1}^*\|^2 \\
\notag
& \leq  \left< \tilde{x}_{k+1}^* - x_{k+1}^*,
	\nabla_x\Lscr_\rho (\tilde{x}_{k+1}^*,y_{k+1},\lambda_k) + P(\tilde{x}_{k+1}^* - x_k) \right. \\
\notag
& \left.  -  \nabla_x\Lscr_\rho (x_{k+1}^*,y_{k+1},\lambda_k) - P (x_{k+1}^* - x_k) \right> \\
\notag
& =  \left< \tilde{x}_{k+1}^* - x_{k+1}^*,
	\nabla_x\Lscr_\rho (\tilde{x}_{k+1}^*,y_{k+1},\lambda_k) + P\tilde{x}_{k+1}^* -  \nabla_x\Lscr_\rho (x_{k+1}^*,y_{k+1},\lambda_k) - P x_{k+1}^* \right> \\
\notag
& =  \left< \tilde{x}_{k+1}^* - x_{k+1}^* , \nabla_x\Lscr_\rho(
		x_{k+1}^*,y_{k+1}^*,\lambda_k) + P(x_{k+1}^*- \tilde{x}_{k+1}^*)  \right. \\
\notag
& \left. + P \tilde{x}_{k+1}^* -
\nabla_x\Lscr_\rho(x_{k+1}^*,y_{k+1},\lambda_k) - P x_{k+1}^* \right> \\
\notag
& \leq  \left< \tilde{x}_{k+1}^* - x_{k+1}^*, \nabla_x\Lscr_\rho(
		x_{k+1}^*,y_{k+1}^*,\lambda_k) - \nabla_x\Lscr_\rho (x_{k+1}^*,y_{k+1},\lambda_k) \right> \\
\notag
& \leq  \| \tilde{x}_{k+1}^* - x_{k+1}^*\|  \| \rho A^TB(y_{k+1}^*-y_{k+1}) \| 
\notag \\
& \notag
\Rightarrow 
 c_x\|\tilde{x}_{k+1}^* - x_{k+1}^*\| \leq   \rho \|A^TB\| \|y_{k+1}-y_{k+1}^* \| \\
& \Rightarrow 
 \bE [\|\tilde{x}_{k+1}^* - x_{k+1}^*\|^2 \mid \Fscr_k] \leq
\left(\rho \|A^TB\| /c_x  \right)^2 \bE[\|y_{k+1}-y_{k+1}^* \|^2
\mid \Fscr_k].
\label{bound1-main recursive lm}
\end{align}

$\pmb{ \bE[\| x_{k+1} - \tilde{x}_{k+1}^* \|^2 \mid \Fscr_k] } $:  Based
on the assumption that $T_{k+1}^x \geq K_x$ and inequality \eqref{bound1-main recursive lm}, by taking
conditional expectations $\bE[ \bullet \mid \scrF_k]$ on both sides of
inequality \eqref{x-errorbound} \yxe{from Lemma~\ref{Lm: errorbound-SIADMM}} {and by recalling that $\scrF_k \subseteq
	\scrF_{k+1}^y$, we obtain the following sequence of inequalities.}
\begin{align*}
& \bE[ \| x_{k+1} - \tilde x_{k+1}^* \|^2 \mid \scrF_k ] 
 \leq \frac{ \frac{\gamma_x^2 \bE [C_x^{(k+1)} \mid \scrF_k ] }{2 c_x \gamma_x - \gamma_x^2 M_x/K_x - 1} + K_x \bE [ \bar{e}_{K_x}^{x,k+1} \mid \scrF_k ]  }{T_{k+1}^x } \\
& \leq \frac{ \left( \frac{\gamma_x^2}{2 c_x \gamma_x - \gamma_x^2 M_x/K_x - 1} + K_x b_x \right)[v_1^x (1 + R) \bE [ \| \tilde{x}_{k+1}^* \|^2 \mid \scrF_k] + v_2^x  ] 
+ K_x a_x \bE [ \| x_k - \tilde{x}_{k+1}^* \|^2 \mid \scrF_k ]}{T_{k+1}^x}  \\
& \leq {1\over   T_{k+1}^x}\left\{ \left( \frac{\gamma_x^2}{2 c_x \gamma_x - \gamma_x^2 M_x/K_x - 1} + K_x b_x \right) v_1^x (1 + R) \times \right. \\ 
& \bE [ 3 \| \tilde{x}_{k+1}^* - x_{k+1}^* \|^2 + 3\| x_{k+1}^* - x^*\|^2 + 3 \| x^* \|^2 \mid \scrF_k]  \\
&  + K_x a_x \bE[ 3\| x_k -x^* \|^2 + 3 \| x^* - x_{k+1}^* \|^2 + 3 \| x_{k+1}^* -\tilde{x}_{k+1}^* \|^2 \mid \Fscr_k ] \\
& + \left. \left( \frac{\gamma_x^2}{2 c_x \gamma_x - \gamma_x^2 M_x/K_x - 1} + K_x b_x \right)v_2^x \right\}, 
\end{align*}
where the third inequality follows $\|a+b+c\|^2 \leq 3\|a\|^2
		+ 3\|b\|^2 + 3\|c\|^2$. Then the right hand side (rhs) can be further bounded as follows by invoking the definition of $\bar{C}^x_{1,1}$.
\begin{align*}
& \mbox{rhs} 
 \leq {1\over   T_{k+1}^x} \bigg[ \left( \frac{\gamma_x^2}{2 c_x \gamma_x - \gamma_x^2 M_x/K_x - 1} + K_x b_x \right) {\times}  \\
&  v_1^x (1 + R) \left[ 3 \left( \frac{\rho}{c_x} \| A^T B \| \right)^2 \bE [\| y_{k+1} - y_{k+1}^* \|^2 \mid \scrF_k] + \frac{ 3  \| u_{k+1}^* - u^*\|_G^2  }{\lambda_{\min}(\hat{P}) } \right] \\
& + K_x a_x \left[ \frac{3 \| u_k - u^* \|_G^2}{\lambda_{\min}(\hat{P})} + \frac{3 \|u^* - u_{k+1}^*\|_G^2}{\lambda_{\min}(\hat{P})} + 3 \left( \frac{\rho}{c_x} \| A^TB \| \right)^2 \bE[
\|y_{k+1} - y_{k+1}^* \|^2 \mid \scrF_k ] \right]  
 + \bar{C}_{1,1}^x \bigg]. 
\end{align*}
{Next, by invoking \eqref{contraction
	of iterates-SIADMM} and by recalling the definitions \vs{in~\eqref{const-SIADMM}} and the bound for $\bE[ \| y_{k+1} - y_{k+1}^* \|^2 \mid \Fscr_k ]$, we obtain
the final bound.}
\begin{align*}
& \bE[ \| x_{k+1} - \tilde x_{k+1}^* \|^2 \mid \scrF_k ] 
  \leq {1 \over T_{k+1}^x} \left\{ \left( \frac{\gamma_x^2}{2 c_x \gamma_x - \gamma_x^2 M_x/K_x - 1} + K_x b_x \right) v_1^x (1 + R)  \times \right.\\
& \left[ 3 \left( \frac{\rho}{c_x} \| A^T B \| \right)^2 \frac{C_{1,2}^y \| u_k - u^* \|_G^2 + C_{1,1}^y}{T_{k+1}^y} + \frac{ 3  \| u_k - u^*\|_G^2 }{\lambda_{\min}(\hat{P})(1+\delta) } \right] \\
& + K_x a_x \left[ \frac{3 \| u_k - u^* \|_G^2}{\lambda_{\min}(\hat{P})} + \frac{3 \|u^* - u_k\|_G^2}{\lambda_{\min}(\hat{P})(1+\delta)} + 3 \left( \frac{\rho}{c_x} \| A^TB \| \right)^2 \frac{C_{1,2}^y \| u_k - u^* \|_G^2 + C_{1,1}^y}{T_{k+1}^y} \right]   
+ \bar{C}_{1,1}^x \bigg\} \\
& = \left[ \left[ \left( \frac{\gamma_x^2}{2 c_x \gamma_x - \gamma_x^2 M_x/K_x - 1} + K_x b_x \right) v_1^x (1 + R) + K_x a_x \right] \cdot 3 \left(\frac{\rho}{c_x}\| A^TB \| \right)^2 \frac{C_{1,2}^y}{T_{k+1}^y}  \right.\\
&  + \left( \frac{\gamma_x^2}{2 c_x \gamma_x - \gamma_x^2 M_x/K_x - 1} + K_x b_x \right) v_1^x (1 + R) \cdot \frac{3}{\lambda_{\min}(\hat{P}) (1 + \delta)} \\
& \left. + K_x a_x \frac{3}{\lambda_{\min}(\hat{P})} \left( 1 + \frac{1}{1+ \delta} \right) \right] \| u_k - u^* \|_G^2 / T_{k+1}^x \\
& + \left[ \left[ \left( \frac{\gamma_x^2}{2 c_x \gamma_x - \gamma_x^2 M_x/K_x - 1} + K_x b_x \right) v_1^x (1 + R) + K_x a_x \right] \cdot 3 \left(\frac{\rho}{c_x}\| A^TB \| \right)^2 \frac{C_{1,1}^y}{T_{k+1}^y} \right.\\
& + \left. \bar{C}_{1,1}^x \right] / T_{k+1}^x
 = \frac{\left( \hat{C}_{1,2}^x \frac{C_{1,2}^y}{T_{k+1}^y} + \bar{C}_{1,2}^x
\right) \| u_k - u^* \|_G^2 + \left( \hat{C}_{1,1}^x \frac{C_{1,1}^y}{T_{k+1}^y} + \bar{C}_{1,1}^x
\right)}{T_{k+1}^x}.
\end{align*}

$\pmb{ \bE[\| \lambda_{k+1} - \lambda_{k+1}^* \|^2 \mid \Fscr_k] } $:
Recall
that $ \lambda_{k+1}^* = \lambda_k - \gamma \rho (Ax_{k+1}^* +
		By_{k+1}^*-b),$
and this implies that
\begin{align*}
& \mathbb{E}[\|\lambda_{k+1}-\lambda_{k+1}^* \|^2 | \Fscr_k] 
 \leq 2\gamma^2 \rho^2 \left(\mathbb{E}[ \lambda_{\max}(A^TA) \|x_{k+1}^*-x_{k+1}\|^2 | \Fscr_k] + \mathbb{E}[ \lambda_{\max}(B^TB) \|y_{k+1}-y_{k+1}^*\|^2 | \Fscr_k ]\right)\\ 
& \leq 2\gamma^2 \rho^2 \left\{ \lambda_{\max}(A^TA) (2 \mathbb{E}[\|x_{k+1}^*-\tilde{x}_{k+1}^*\|^2 | \Fscr_k] \right. + 2 \bE[\| \tilde{x}_{k+1}^* - x_{k+1} \|^2 \mid \scrF_k] ) \\
&  + \left.  \lambda_{\max}(B^TB) \mathbb{E}[ \|y_{k+1}-y_{k+1}^*\|^2 | \Fscr_k ] \right\} \\
& \leq 2\gamma^2 \rho^2 \left\{  \lambda_{\max}(A^TA) \left[ 2 \left( \rho \| A^TB \| / c_x \right)^2  \mathbb{E}[\|y_{k+1} - y_{k+1}^*\|^2 | \Fscr_k]  \right. \right. \\
& + 2 \bE[\| \tilde{x}_{k+1}^* - x_{k+1} \|^2 \mid \scrF_k] \Big] +  \mathbb{E}[ \lambda_{\max}(B^TB) \|y_{k+1}-y_{k+1}^*\|^2 | \Fscr_k ] \Big\}.
\end{align*} 

$\pmb{ \bE[\| u_{k+1} - u_{k+1}^* \|_G^2 \mid \Fscr_k] } $: 
Through the definition of $G$-norm \vs{and} the upper bound developed for $\bE[ \| \lambda_{k+1} - \lambda_{k+1}^* \|^2 \mid \Fscr_k]$, we have that:
\begin{align*}
& \quad \bE[\| u_{k+1} - u_{k+1}^* \|_G^2 \mid \Fscr_k] 
 \leq \lambda_{\max}(\hat{P}) \bE[ \| x_{k+1} - x_{k+1}^* \|^2 \mid \Fscr_k] + \lambda_{\max}(Q) \bE[\| y_{k+1} - y_{k+1}^* \|^2 \mid \Fscr_k] \\
&  + \frac{1}{\rho \gamma} \bE[ \| \lambda_{k+1} - \lambda_{k+1}^* \|^2 \mid \Fscr_k] \\
& \leq \lambda_{\max}(\hat{P}) ( 2\bE[ \| x_{k+1} - \tilde{x}_{k+1}^* \|^2 \mid \Fscr_k] + 2 \bE[\| \tilde{x}_{k+1}^* - x_{k+1}^* \|^2 \mid \Fscr_k] )\\
&  + \lambda_{\max}(Q) \bE[\| y_{k+1} - y_{k+1}^* \|^2 \mid \Fscr_k] + \frac{1}{\rho \gamma} \bE[ \| \lambda_{k+1} - \lambda_{k+1}^* \|^2
\mid \Fscr_k] \\
& \leq \lambda_{\max}(\hat{P}) \left[ 2\bE[ \| x_{k+1} - \tilde{x}_{k+1}^* \|^2 \mid \Fscr_k] + 2 \left( \frac{\rho}{c_x} \| A^TB \| \right)^2 \bE[\| y_{k+1}^* - y_{k+1} \|^2 \mid \Fscr_k] \right] \\
&  + \lambda_{\max}(Q) \bE[\| y_{k+1} - y_{k+1}^* \|^2 \mid \Fscr_k] + 2 \gamma \rho \mathbb{E}[ \lambda_{\max}(B^TB) \|y_{k+1}-y_{k+1}^*\|^2 | \Fscr_k ] \\
&  +  2 \gamma \rho  \left[  \lambda_{\max}(A^TA) \left[ 2 \left(
			\rho\| A^TB \|/c_x \right)^2  \mathbb{E}[\|y_{k+1} -
		y_{k+1}^*\|^2 | \Fscr_k] \right. 
 + 2 \bE[\| \tilde{x}_{k+1}^* - x_{k+1} \|^2 \mid \scrF_k] \Big] \right], 
\end{align*}
{where the second inequality follows from $\|x_{k+1}- x_{k+1}^*\|^2
\leq 2\|x_{k+1}-\tilde x_{k+1}^*\|^2 + 2 \|x^*_{k+1}-\tilde x_{k+1}^*\|^2$, and
the third inequality \vs{follows from} \eqref{bound1-main recursive lm}. Then the bound
	on the right can be simplified as follows.}
	\begin{align*}
& \bE[\| u_{k+1} - u_{k+1}^* \|_G^2 \mid \Fscr_k] 
 \leq (2\lambda_{\max}(\hat{P}) + 4 \gamma \rho \lambda_{\max}(A^TA) ) \bE[\| \tilde{x}_{k+1}^* - x_{k+1} \|^2 \mid \Fscr_k] \\
&  + \left[ 2 \lambda_{\max}(\hat{P}) \left( \frac{\rho}{c_x} \| A^TB \| \right)^2 + \lambda_{\max}(Q) \right. \\
&  + \left. 4\gamma \rho \lambda_{\max}(A^TA) \left( \frac{\rho\| A^TB \|}{c_x} \right)^2 + 2\gamma \rho  \lambda_{\max}(B^TB) \right] \bE[ \| y_{k+1} - y_{k+1}^* \|^2 \mid \Fscr_k] \\
& = C_{1}^x \cdot \bE[\| \tilde{x}_{k+1}^* - x_{k+1} \|^2 \mid \Fscr_k] + C_1^y \cdot \bE[ \| y_{k+1} - y_{k+1}^* \|^2 \mid \Fscr_k].
\end{align*}
{From the bounds for} $\bE[ \| y_{k+1} - y_{k+1}^* \|^2$,
	$\bE[ \| \tilde{x}_{k+1}^* - x_{k+1} \|^2$ and $\bE[ \| u_{k+1} - u_{k+1}^*
	\|_G^2$, and inequality \eqref{contraction of iterates-SIADMM} \vs{and}
	\eqref{Cauchy-SIADMM}, we obtain the following :
\begin{align*}
& \bE[ \| u_{k+1} - u^* \|_G^2 \mid \scrF_k ] \\
& \leq (1 + R_{0,k}) \bE[ \| u_{k+1} - u_{k+1}^* \|_G^2 \mid \Fscr_k ] + (1 + 1/R_{0,k}) \bE[ \| u_{k+1}^* - u^* \|_G^2 \mid \Fscr_k ] \\
& \leq (1 + R_{0,k}) \bE[ \| u_{k+1} - u_{k+1}^* \|_G^2 \mid \Fscr_k ] + \frac{(1 + 1/R_{0,k})}{1 + \delta} \bE[ \| u_k - u^* \|_G^2 \mid \Fscr_k ] \\
& \leq (1 + R_{0,k})( C_{1}^x \cdot \bE[\| \tilde{x}_{k+1}^* - x_{k+1} \|^2 \mid \Fscr_k] + C_1^y \cdot \bE[ \| y_{k+1} - y_{k+1}^* \|^2 \mid \Fscr_k] ) \\
&  + \frac{(1 + 1/R_{0,k})}{1 + \delta} \| u_k - u^* \|_G^2 \\
& \leq (1 + R_{0,k})C_1^x \left[ \left( \hat{C}_{1,2} \frac{C_{1,2}^y}{T_{k+1}^y} + \bar{C}_{1,2}^x \right) \frac{ \| u_k - u^* \|_G^2}{T_{k+1}^x} + \left( \hat{C}_{1,1}^x \frac{C_{1,1}^y}{T_{k+1}^y} + \bar{C}_{1,1}^x \right) \big/ T_{k+1}^x \right] \\
&   + (1 + R_{0,k})C_1^y \cdot \frac{C_{1,2}^y \| u_k - u^* \|_G^2 + C_{1,1}^y}{T_{k+1}^y} +  \frac{(1 + 1/R_{0,k})}{1 + \delta} \| u_k - u^* \|_G^2 \\
& \leq \left[ (1+R_{0,k}) \left( \frac{C_2^x}{T_{k+1}^x} \left( \bar C_{1,2}^x + \hat C_{1,2}^x \frac{C_{1,2}^y}{T_{k+1}^y} \right) + \frac{C_2^y C_{1,2}^y}{T_{k+1}^y} \right) + \frac{1 + 1/R_{0,k}}{1 + \delta} \right] \| u_k - u^* \|_G^2 \\
&  + (1+R_{0,k}) \left( \frac{C_1^x}{T_{k+1}^x } \left( \bar C_{1,1}^x + \hat C_{1,1}^x \frac{C_{1,1}^y}{T_{k+1}^y} \right)  + \frac{C_1^y C_{1,1}^y}{T_{k+1}^y} \right).
\end{align*}
\end{proof}

\noindent{\bf Proof of Corollary~\ref{Corr: theobound}}
\begin{proof}
\yux{(i) Apply Theorem~\ref{linear convergence of g-ADMM} and the result follows.} \\
\yux{(ii)} We show this corollary by proving that for any $k \geq 0$, $R_{0,k} > 0$,
\begin{align}
 \bE[ \| u_{k+1} - u^* \|_G^2 \mid \Fscr_k] \leq \left( \frac{ (1+R_{0,k}) \zeta_2}{T_{k+1}^x} + \frac{1 + 1/R_{0,k}}{ 1 + \delta} \right) \|u_k - u^*\|_G^2 + \frac{ (1+R_{0,k}) \zeta_1}{T_{k+1}^x}, 
\end{align}
where
\begin{align} \label{Def: zeta}
\begin{aligned}
& \zeta_2 = (\lambda_{\max}(\hat{P}) + \rho\gamma \| A \|^2 ) C_{1,2}^x, \quad \zeta_1 = ( \lambda_{\max}(\hat{P}) + \rho \gamma\| A \|^2 ) C_{1,1}^x, \\
& C_{1,2}^x = \left( \frac{\gamma_x^2}{2c_x\gamma_x - 1 - \gamma_x^2 M_x / K_x } + K_x b_x \right) v_1^x (1 + R) \cdot \frac{2}{\lambda_{\min} (\hat{P})(1+\delta) } \\
& + K_x a_x \cdot \frac{2}{\lambda_{\min}(\hat{P})} \left(1+ \frac{1}{1+\delta} \right), \\
& C_{1,1}^x = \left( \frac{\gamma_x^2}{2c_x\gamma_x - 1 - \gamma_x^2 M_x / K_x } + K_x b_x \right)[ v_1^x(1+R) \cdot 2 \| x^* \|^2 + v_2^x ],
\end{aligned}
\end{align}
$R$ is any positive number that is consistent with the one in definition of $M_x$, $M_y$, $C_x^{(k)}$, $C_y^{(k)}$. (See \eqref{Def: M_y and K_y}) and \eqref{Def: M_x and K_x})

Since $y_{k+1}$ can be exactly obtained, $y_{k+1} = y_{k+1}^*$ and $x_{k+1}^* = \tilde{x}_{k+1}^*$, $\scrF_{k+1}^y = \scrF_k$. Recall that $y_{k+1}^*$, $x_{k+1}^*$, $\tilde{x}_{k+1}^*$ and $\lambda_{k+1}^*$ are defined in \eqref{y_k_star-SIADMM}, \eqref{x_k_star-SIADMM}, \eqref{x_tilde_k_star-SIADMM} and \eqref{lambda_k_star-SIADMM}. Then, 
	\begin{align*}
	& \bE[ \| x_{k+1} - x_{k+1}^* \|^2 \mid \scrF_k ]  \leq \frac{ \frac{\gamma_x^2 \bE [C_x^{(k+1)} \mid \scrF_k ] }{2 c_x \gamma_x - \gamma_x^2 M_x/K_x - 1} + K_x \bE [ \bar{e}_{K_x}^{x,k+1} \mid \scrF_k ]}{T_{k+1}^x}  \\
	& \leq  \frac{\left(\frac{\gamma_x^2}{2 c_x \gamma_x - \gamma_x^2 M_x / K_x - 1} + K_x b_x \right)[v_1^x (1 + R)  \bE [ \| x_{k+1}^* \|^2 \mid \scrF_k] + v_2^x ] 
 + K_x a_x \bE [ \| x_k - x_{k+1}^* \|^2 \mid \scrF_k ]}{T_{k+1}^x}  \\
	& = \frac{ \left( \frac{\gamma_x^2}{2 c_x \gamma_x - \gamma_x^2 M_x/K_x - 1} + K_x b_x \right)[v_1^x (1 + R) \| x_{k+1}^* \|^2 + v_2^x ]  + K_x a_x \| x_k - x_{k+1}^* \|^2 }{T_{k+1}^x}  \\
	& \leq \frac{ \left( \frac{\gamma_x^2}{2 c_x \gamma_x - \gamma_x^2 M_x/K_x - 1} + K_x b_x \right) v_1^x (1 + R) \left( 2\| x_{k+1}^* - x^*\|^2 + 2 \| x^* \|^2 \right)}{T_{k+1}^x} \\
	& + \frac{K_x a_x \left( 2\| x_k -x^* \|^2 + 2 \| x^* - x_{k+1}^* \|^2 \right)  +  \left( \frac{\gamma_x^2}{2 c_x \gamma_x - \gamma_x^2 M_x/K_x - 1} + K_x b_x \right)v_2^x}{T_{k+1}^x} \\
	& \leq \frac{ \left( \frac{\gamma_x^2}{2 c_x \gamma_x - \gamma_x^2 M_x/K_x - 1} + K_x b_x \right) v_1^x (1 + R) \cdot \left( \frac{2 \| u_k - u^*\|_G^2}{\lambda_{\min}(\hat P)(1+\delta)} + 2 \| x^* \|^2 \right)}{T_{k+1}^x} \\
	& + \frac{\frac{2 K_x a_x}{\lambda_{\min}(\hat P)}\left(1+\frac{1}{1+\delta}\right)\| u_k -u^* \|_G^2 
	 +  \left( \frac{\gamma_x^2}{2 c_x \gamma_x - \gamma_x^2 M_x/K_x - 1} + K_x b_x \right)v_2^x}{T_{k+1}^x} \\
	& = \frac{  \overbrace{\left(\left( \frac{\gamma_x^2}{2c_x\gamma_x - 1 - \gamma_x^2 M_x / K_x } + K_x b_x \right) \frac{2v_1^x (1 + R)}{\lambda_{\min}(\hat{P})(1+\delta)} + 
 \frac{2 K_x a_x}{\lambda_{\min}(\hat{P})}\left(1+
	\frac{1}{1+\delta}\right) \right)}^{\vs{ \triangleq C_{1,2}^x}} \|u_k - u^* \|_G^2}{T_{k+1}^x}  \notag\\
	& + \frac{\overbrace{\left( \frac{\gamma_x^2}{2c_x\gamma_x - 1 - \gamma_x^2 M_x / K_x } + K_x b_x \right)[ v_1^x(1+R)\cdot 2 \| x^* \|^2 + v_2^x ]}^{\vs{\triangleq C_{1,1}^x}}}{ T_{k+1}^x} 
	= \frac{ C_{1,2}^x \| u_k - u^* \|_G^2 + C_{1,1}^x }{T_{k+1}^x}.
	\end{align*}
	where the third inequality follows from $\|a+b\|^2 \leq 2\|a\|^2
		+ 2\|b\|^2$, and the fourth inequality is a result form the definition of $G$-norm as well as \eqref{contraction of iterates-SIADMM}. Furthermore, \vs{for all $R_{0,k} > 0$ and $\forall k \geq 0$}, 
	\begin{align*}
	 & \bE[ \| u_{k+1} - u^* \|_G^2 \usss{ \mid \scrF_k }] 
	 \leq (1 + R_{0,k}) \bE[ \| u_{k+1} - u_{k+1}^* \|_G^2 \mid \Fscr_k ] + (1 + \tfrac{1}{R_{0,k}}) \bE[ \| u_{k+1}^* - u^* \|_G^2 \mid \Fscr_k ] \\
	& \leq (1 + R_{0,k}) \bE[ \| u_{k+1} - u_{k+1}^* \|_G^2 \mid \Fscr_k ] + \frac{(1 + \tfrac{1}{R_{0,k}})}{1 + \delta} \bE[ \| u_k - u^* \|_G^2 \mid \Fscr_k ] \\
	& = (1 + R_{0,k}) \bE[ \| x_{k+1}^* - x_{k+1} \|_{\hat P}^2 + \frac{1}{\rho \gamma} \| \lambda_{k+1}^* - \lambda_{k+1} \|^2 \mid \Fscr_k] 
	 + \frac{(1 + \tfrac{1}{R_{0,k}})}{1 + \delta} \| u_k - u^* \|_G^2 \\
	& = (1 + R_{0,k}) \bE[ \| x_{k+1}^* - x_{k+1} \|_{\hat P}^2 + \frac{1}{\rho \gamma} \| \rho \gamma A(x_{k+1}^* - x_{k+1}) \|^2 \mid \Fscr_k] 
	+ \frac{(1 + \tfrac{1}{R_{0,k}})}{1 + \delta} \| u_k - u^* \|_G^2 \\
	& \leq (1 + R_{0,k}) (\lambda_{\max}(\hat P) + \rho \gamma \|A\|^2 ) \bE[ \| x_{k+1}^* - x_{k+1} \|^2 \mid \Fscr_k] 
	 + \frac{(1 + \tfrac{1}{R_{0,k}})}{1 + \delta} \| u_k - u^* \|_G^2 \\
	& \leq (1 + R_{0,k}) (\lambda_{\max}(\hat P) + \rho \gamma \|A\|^2 ) \cdot \frac{C_{1,2}^x \|u_k - u\|_G^2 + C_{1,1}^x }{T_{k+1}^x} 
	 + \frac{(1 + \tfrac{1}{R_{0,k}})}{1 + \delta} \| u_k - u^* \|_G^2 \\
	& = \Bigg[ \frac{(1 + R_{0,k}) \overbrace{(\lambda_{\max}(\hat P) + \rho \gamma \|A\|^2 ) C_{1,2}^x}^{\vs{ \triangleq \zeta_2}}}{T_{k+1}^x} + \frac{(1 + \tfrac{1}{R_{0,k}})}{1 + \delta} \Bigg] \cdot \| u_k - u^* \|_G^2 \\
	&  + \frac{(1 + R_{0,k}) \overbrace{(\lambda_{\max}(\hat P) + \rho \gamma \|A\|^2 )C_{1,1}^x}^{\vs{\triangleq \zeta_1}}}{T_{k+1}^x}, 
	\end{align*}
where the second, \vs{third, and fourth inequalities are based on \eqref{contraction of iterates-SIADMM},  the property of matrix norm, and  the bound} for $\bE[ \| x_{k+1} - x_{k+1}^* \|^2 \mid
\scrF_k ]$ developed earlier.  \end{proof} 

\end{document}